\documentclass[12pt]{amsart}
\usepackage{amssymb,latexsym,amsmath,amscd,amsthm,graphicx, color}
\usepackage[all]{xy}
\usepackage{pgf,tikz}
\usepackage{mathrsfs}
\usetikzlibrary{arrows}
\definecolor{uuuuuu}{rgb}{0.26666666666666666,0.26666666666666666,0.26666666666666666}
\definecolor{xdxdff}{rgb}{0.49019607843137253,0.49019607843137253,1.}
\definecolor{ffqqqq}{rgb}{1.,0.,0.}
\definecolor{ffqqqq}{rgb}{1.,0.,0.}
\definecolor{ffxfqq}{rgb}{1.,0.4980392156862745,0.}

\raggedbottom

\pagestyle{empty}

\definecolor{uuuuuu}{rgb}{0.26666666666666666,0.26666666666666666,0.26666666666666666}
\definecolor{qqwuqq}{rgb}{0.,0.39215686274509803,0.}
\definecolor{zzttqq}{rgb}{0.6,0.2,0.}
\definecolor{xdxdff}{rgb}{0.49019607843137253,0.49019607843137253,1.}
\definecolor{qqqqff}{rgb}{0.,0.,1.}
\definecolor{cqcqcq}{rgb}{0.7529411764705882,0.7529411764705882,0.7529411764705882}
\definecolor{sqsqsq}{rgb}{0.12549019607843137,0.12549019607843137,0.12549019607843137}

\setlength{\oddsidemargin}{0 in} \setlength{\evensidemargin}{0 in}
\setlength{\textwidth}{6.75 in} \setlength{\topmargin}{-.6 in}
\setlength{\headheight}{.00 in} \setlength{\headsep}{.3 in }
\setlength{\textheight}{10 in} \setlength{\footskip}{0 in}

\theoremstyle{plain}

\newtheorem{theorem}[subsection]{Theorem}

\newtheorem{lemma}[subsection]{Lemma}
\newtheorem{defi}[subsection]{Definition}
\newtheorem{prop}[subsection]{Proposition}

\theoremstyle{definition}

\newtheorem{exam}[subsection]{Example}

\newtheorem{remark}[subsection]{Remark}

\newtheorem{claim}[subsection]{Claim}


\newcommand{\uu}{\cup}
\newcommand{\ii}{\cap}

\newcommand{\sci}{\subset}

\newcommand{\nin}{\notin}
\newcommand{\es}{\emptyset}
\newcommand{\set}[1]{\{#1\}}


\newcommand{\ga}{\alpha}
\newcommand{\gb}{\beta}


\newcommand{\gq}{\theta}

\newcommand{\tbf}{\textbf}
\newcommand{\tit}{\textit}

\newcommand{\D}[1]{\mathbb{#1}}

\newcommand{\te}{\text}

\newcommand{\ol}{\overline}


\newcommand{\pa}{\partial}

\begin{document}
To appear, Uniform Distribution Theory
\title{Quantization for a mixture of uniform distributions associated with probability vectors}

 \author{Mrinal Kanti Roychowdhury}
\address{School of Mathematical and Statistical Sciences\\
University of Texas Rio Grande Valley\\
1201 West University Drive\\
Edinburg, TX 78539-2999, USA.}
\email{mrinal.roychowdhury@utrgv.edu}

 \author{Wasiela Salinas }
\address{School of Mathematical and Statistical Sciences\\
University of Texas Rio Grande Valley\\
1201 West University Drive\\
Edinburg, TX 78539-2999, USA.}
\email{wsalinas47@gmail.com}

\subjclass[2010]{60Exx, 94A34.}
\keywords{Mixed distribution, uniform distribution, optimal sets, quantization error, quantization dimension, quantization coefficient}

\date{}
\maketitle

\pagestyle{myheadings}\markboth{Mrinal Kanti Roychowdhury and Wasiela Salinas}{Quantization for a mixture of uniform distributions associated with probability vectors}

\begin{abstract} The basic goal of quantization for probability distribution is to reduce the number of values, which is typically uncountable, describing a probability distribution to some finite set and thus approximation of a continuous probability distribution by a discrete distribution.
Mixtures of probability distributions, also known as mixed distributions, are an exciting new area for optimal quantization. In this paper, we investigate the optimal quantization for three different mixed distributions generated by uniform distributions associated with probability vectors.
\end{abstract}

\section{Introduction}

Continuous-valued signals can take any real value either in the entire range of real numbers or in a range limited by some system constraints. In either of the two cases, an uncountably infinite set of values is required to represent the signal values.
If a signal has to be processed or stored digitally, each of its values must be representable by a finite number of bits. Thus, all values together have to form a finite countable set. A signal consisting only of such discrete values is said to be quantized. The process of transformation of a continuous-valued signal into a discrete-valued one is called `quantization'.  It has broad application in engineering and technology (see \cite{GG, GN, Z}).
For mathematical treatment of quantization one is referred to Graf-Luschgy's book (see \cite{GL1}).
 Let $\D R^d$ denote the $d$-dimensional Euclidean space equipped with the Euclidean norm $\|\cdot\|$, and let $P$ be a Borel probability measure on $\D R^d$. Then, the $n$th \textit{quantization
error} for $P$, with respect to the squared Euclidean distance, is defined by
\begin{equation*} \label{eq1} V_n:=V_n(P)=\inf \Big\{V(P; \ga) : \ga \subset \mathbb R^d, \text{ card}(\ga) \leq n \Big\},\end{equation*}
where $V(P; \ga):= \int \min\limits_{a\in\alpha} \|x-a\|^2 dP(x)$ represents the distortion error for $P$ due to the set $\ga$.
A set $\ga\sci \D R^d$ is called an optimal set of $n$-means for $P$ if $V_n(P)=V(P; \ga)$. It is known that for a continuous Borel probability measure an optimal set of $n$-means always has exactly $n$-elements (see \cite{GL1}). Optimal sets of $n$-means for different probability distributions were calculated by several authors, for example, one can see \cite{CR, DR1, DR2, GL2, L1, R1, R2, R3, R4, R5, RR1}. The number
\[\lim_{n\to \infty}  \frac{2\log n}{-\log V_n(P)},\] if it exists, is called the \tit{quantization dimension} of the probability measure $P$, and is denoted by $D(P)$; on the other hand, for any $s\in (0, +\infty)$, the number $\lim\limits_{n\to\infty} n^{\frac 2 s} V_n(P)$, if it exists, is called the $s$-dimensional \tit{quantization coefficient} for $P$ (see \cite{GL1, P}).

Let us now state the following proposition (see \cite{GG, GL1}):
\begin{prop} \label{prop0}
Let $\ga$ be an optimal set of $n$-means for $P$, and $a\in \ga$. Then,

$(i)$ $P(M(a|\ga))>0$, $(ii)$ $ P(\partial M(a|\ga))=0$, $(iii)$ $a=E(X : X \in M(a|\ga))$,
where $M(a|\ga)$ is the Voronoi region of $a\in \ga , $ i.e.,  $M(a|\ga)$ is the set of all elements $x$ in $\D R^d$ which are closest to $a$ among all the elements in $\ga$.
\end{prop}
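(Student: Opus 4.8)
The plan is to deduce all three statements from one elementary tool, the parallel-axis (bias--variance) identity: for any Borel set $A$ with $P(A)>0$ and any $c\in\D R^d$,
\[
\int_A\|x-c\|^2\,dP(x)=\int_A\|x-\hat c\|^2\,dP(x)+P(A)\,\|c-\hat c\|^2,\qquad \hat c:=\frac{1}{P(A)}\int_A x\,dP(x),
\]
which follows by expanding the square and using $\int_A(x-\hat c)\,dP(x)=0$. In particular $\hat c$ is the \emph{unique} minimizer of $c\mapsto\int_A\|x-c\|^2\,dP(x)$, and the defect is exactly the term $P(A)\,\|c-\hat c\|^2$. I would also use repeatedly that any finite set $\gb$ realizes its distortion through its Voronoi partition, $V(P;\gb)=\sum_{b\in\gb}\int_{M(b|\gb)}\|x-b\|^2\,dP(x)$, and that for \emph{any} measurable partition $\set{A_b}_{b\in\gb}$ of $\D R^d$ one has the domination $V(P;\gb)\le\sum_{b}\int_{A_b}\|x-b\|^2\,dP(x)$.

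For $(i)$ I would argue by contradiction: suppose $P(M(a|\ga))=0$. Since the support of $P$ here has at least $n$ points (each $P$ is absolutely continuous, hence nonatomic), some region $M(b|\ga)$ must carry positive spread, i.e.\ $\int_{M(b|\ga)}\|x-b\|^2\,dP(x)>0$, equivalently $M(b|\ga)$ is not $P$-a.e.\ a single point. I would then split $M(b|\ga)$ by a hyperplane into two pieces of positive measure whose conditional means are distinct, assign to $b$ the centroid of one piece and relocate $a$ to the centroid of the other, leaving every other Voronoi region untouched. By the identity above the combined contribution of the two pieces is strictly smaller than the original $\int_{M(b|\ga)}\|x-b\|^2\,dP(x)$, so the modified $n$-point set has strictly smaller distortion, contradicting optimality. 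This is the step I expect to be the main obstacle: one must choose the splitting hyperplane so that both pieces have positive measure \emph{and} distinct centroids (this is exactly where nonatomicity of $P$ on $M(b|\ga)$ enters), and the strict decrease then comes from the positivity of the defect term $P(A)\,\|c-\hat c\|^2$.

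For $(iii)$, now that $(i)$ guarantees $P(M(a|\ga))>0$, the quantity $\hat a:=E(X:X\in M(a|\ga))$ is well defined. I would form $\ga'=(\ga\sm\set a)\uu\set{\hat a}$ and bound $V(P;\ga')$ using the partition obtained from the Voronoi regions of $\ga$ with $a$ replaced by $\hat a$ on $M(a|\ga)$. The identity gives $\int_{M(a|\ga)}\|x-\hat a\|^2\,dP(x)\le\int_{M(a|\ga)}\|x-a\|^2\,dP(x)$ while all other regions are unchanged, so $V(P;\ga')\le V(P;\ga)=V_n(P)$. Optimality forces equality, and because $P(M(a|\ga))>0$ the inequality in the identity is strict unless $a=\hat a$; hence $a=E(X:X\in M(a|\ga))$.

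For $(ii)$, I would observe that $\partial M(a|\ga)\ci\bigcup_{b\in\ga\sm\set a}H_{ab}$, where $H_{ab}=\setm{x\in\D R^d}{\|x-a\|=\|x-b\|}$ is the perpendicular bisector of $a$ and $b$. Since an optimal set of $n$-means consists of $n$ distinct points, each $a\ne b$, so every $H_{ab}$ is a genuine affine hyperplane and therefore a Lebesgue-null set. As the measures considered here are absolutely continuous (more generally, any $P$ charging no hyperplane suffices), $P(H_{ab})=0$ for each $b$, and the finite union yields $P(\partial M(a|\ga))=0$.
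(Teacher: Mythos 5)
The paper itself gives no proof of this proposition --- it is quoted from \cite{GG, GL1} --- so your attempt can only be measured against the standard argument. Your parts $(i)$ and $(iii)$ are exactly that standard argument and are correct: the parallel-axis identity, the splitting of a non-degenerate Voronoi cell to absorb a cell of measure zero (using nonatomicity of $P$ to get two positive-mass pieces with distinct centroids), and the centroid replacement with strictness forced by $P(M(a|\ga))>0$ are all sound, and you correctly establish $(i)$ before invoking it in $(iii)$.

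Part $(ii)$, however, has a genuine gap. The inclusion $\partial M(a|\ga)\ci\bigcup_{b\neq a}H_{ab}$ is fine, but the justification that each $H_{ab}$ is $P$-null because ``the measures considered here are absolutely continuous'' fails for the principal measure of this paper. The mixed distribution of Section~2 is supported on a circle together with a diameter; it is singular with respect to planar Lebesgue measure and assigns mass $\frac 12$ to the diameter $L_1$, hence mass at least $\frac 12$ to the line (a hyperplane in $\D R^2$) containing $L_1$. Optimal sets here really do contain pairs of points symmetric about the $x_1$-axis --- e.g., $p_1=(0, 0.90407)$ and $p_4=(0,-0.90407)$ in Proposition~\ref{prop4} --- and for such a pair $H_{p_1p_4}$ \emph{is} the $x_1$-axis, so $P(H_{p_1p_4})\geq \frac 12>0$. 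Thus ``every bisector is $P$-null'' is false in the very setting the proposition is applied to, and your parenthetical fallback (``any $P$ charging no hyperplane suffices'') is a hypothesis that is not satisfied. The correct general route, as in Graf--Luschgy, is to show that the \emph{tie set} $T_{ab}:=M(a|\ga)\ii M(b|\ga)$ is $P$-null: every point of $\partial M(a|\ga)$ lies in some $T_{ab}$ (a boundary point of the polyhedron $M(a|\ga)$ satisfies $\|x-a\|=\|x-b\|\leq \|x-c\|$ for some $b$ and all $c$, hence belongs to $M(b|\ga)$ as well), and if $P(T_{ab})>0$ then assigning $T_{ab}$ wholly to $a$ and wholly to $b$ both produce optimal partitions, so the centroid condition applied to each forces $E(X: X\in T_{ab})$ to equal both $a$ and $b$, contradicting $a\neq b$. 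Replacing your hyperplane-nullity step by this tie-set argument repairs the proof.
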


Proposition~\ref{prop0} says that if $\ga$ is an optimal set and $a\in\ga$, then $a$ is the \tit{conditional expectation} of the random variable $X$ given that $X$ takes values in the Voronoi region of $a$.
The following theorem is known.
\begin{theorem} [see \cite{RR2}] \label{th31}   Let $P$ be a uniform distribution on the closed interval $[a, b]$. Then, the optimal set $n$-means is given by $ \ga_n:=\set{a+\frac {2i-1}{2n}(b-a) : 1\leq i\leq n}$, and the corresponding quantization error is
$V_n:=V_n(P)=\frac{(a-b)^2}{12 n^2}.$
\end{theorem}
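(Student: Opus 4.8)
The plan is to reduce the $n$-point optimization to a one-variable convexity inequality over the lengths of a partition of $[a,b]$, and then to exhibit $\ga_n$ as the configuration attaining the resulting lower bound. First I would record the elementary one-dimensional fact that drives everything: for the uniform law on a subinterval $I\sci[a,b]$ of length $\ell$, the quantity $\int_I (x-c)^2\,dP(x)$ is minimized over $c\in\D R$ precisely when $c$ is the midpoint of $I$ (this is Proposition~\ref{prop0}(iii), the conditional-expectation/centroid condition, specialized to the uniform measure), and the minimum value is $\frac{\ell^3}{12(b-a)}$, obtained by a direct computation with density $\frac{1}{b-a}$.

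Next I would pass from an arbitrary competitor to a partition. Given any $\ga=\{a_1<\dots<a_n\}$, its Voronoi cells are intervals whose intersections with $[a,b]$ form subintervals $I_1,\dots,I_n$ (some possibly degenerate) with lengths $\ell_1,\dots,\ell_n$ satisfying $\sum_{j=1}^n \ell_j=b-a$. Splitting the distortion cell by cell and applying the midpoint fact on each cell gives
\[
V(P;\ga)\;=\;\sum_{j=1}^n \int_{I_j}(x-a_j)^2\,dP(x)\;\ge\;\frac{1}{12(b-a)}\sum_{j=1}^n \ell_j^3 .
\]

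The final step is the convexity estimate. Since $t\mapsto t^3$ is convex on $[0,\infty)$, Jensen's inequality (equivalently the power-mean inequality) yields $\sum_{j=1}^n \ell_j^3 \ge n\big(\tfrac{1}{n}\sum_{j=1}^n \ell_j\big)^3=\frac{(b-a)^3}{n^2}$, with equality exactly when all the $\ell_j$ coincide. Combining the two displays produces the lower bound $V_n\ge \frac{(b-a)^2}{12n^2}=\frac{(a-b)^2}{12n^2}$. To see sharpness, and that $\ga_n$ is the minimizer, I would verify directly that the Voronoi cells of $\ga_n$ are the $n$ congruent subintervals of $[a,b]$ of length $\frac{b-a}{n}$ and that each point of $\ga_n$ is the midpoint of its cell; then both inequalities above become equalities when $\ga=\ga_n$, so $V(P;\ga_n)=\frac{(a-b)^2}{12n^2}=V_n$.

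I expect the only genuinely delicate point to be the reduction in the middle step: one must ensure that the cells induced by an arbitrary $\ga$ really do tile $[a,b]$ by intervals (automatic in dimension one) and that no mass sits on the cell boundaries, which is exactly what Proposition~\ref{prop0}(ii) guarantees. The convexity step itself is routine, but it is worth emphasizing that the equality clause in Jensen's inequality simultaneously forces the optimal partition to be the equal one, which is what singles out $\ga_n$ among all configurations satisfying the necessary conditions.
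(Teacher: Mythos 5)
Your proof is correct and complete: the cellwise midpoint/centroid reduction, the bound $\int_{I_j}(x-a_j)^2\,dP\ge \frac{\ell_j^3}{12(b-a)}$, and the Jensen step $\sum_j\ell_j^3\ge n\bigl(\frac{b-a}{n}\bigr)^3$ with its equality clause together pin down both the value $\frac{(a-b)^2}{12n^2}$ and the uniqueness of the minimizer $\ga_n$. Note that the paper itself gives no proof of this theorem --- it is quoted from \cite{RR2} --- so there is nothing internal to compare against; your argument is the standard one for this fact and would serve as a self-contained substitute for the citation.
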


\begin{theorem} \label{th32}
Let $\ga_n$ be an optimal set of $n$-means for a uniform distribution on the unit circular arc $S$ given by
\[S:=\set{(\cos \gq, \sin \gq) : \ga\leq \gq\leq \gb},\]
where $0\leq \ga<\gb\leq 2\pi$. Then,
\[\ga_n:=\Big\{ \frac {2n}{\gb-\ga} \sin(\frac{\gb-\ga}{2n}) \Big(\cos \Big(\ga+(2j-1){\frac{\gb-\ga}{2n}}\Big), \   \sin  \Big(\ga+(2j-1){\frac{\gb-\ga}{2n}}\Big)\Big) : j=1, 2, \cdots, n \Big  \} \]
forms an optimal set of $n$-means, and the corresponding quantization error is given by
\[V_n=\frac{(\ga-\gb)^2-2n^2+2n^2\cos\frac{\ga-\gb}{n}}{(\ga-\gb)^2}.\]
\end{theorem}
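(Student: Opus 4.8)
The plan is to pass to the angular parametrization $\gq\mapsto(\cos\gq,\sin\gq)$, under which the uniform distribution on $S$ becomes the uniform density $\frac 1{\gb-\ga}$ on $[\ga,\gb]$ (arc length on the unit circle is $d\gq$, and the total length is $\gb-\ga$), and the squared distance from a curve point to a candidate quantizer becomes an elementary function of $\gq$. The argument then splits into two halves: first verify that the displayed set $\ga_n$ satisfies the centroid (conditional-expectation) condition of Proposition~\ref{prop0}$(iii)$ for the partition of $S$ into $n$ equal subarcs, and then show that this equal-subarc configuration is genuinely optimal and read off its distortion.

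For the centroid computation I would take an arbitrary subarc $\set{(\cos\gq,\sin\gq):\gq_1\leq\gq\leq\gq_2}$ and write its centroid with respect to the uniform measure as
\[\Big(\tfrac 1{\gq_2-\gq_1}\!\int_{\gq_1}^{\gq_2}\!\cos\gq\,d\gq,\ \tfrac 1{\gq_2-\gq_1}\!\int_{\gq_1}^{\gq_2}\!\sin\gq\,d\gq\Big)=\tfrac 1{\gq_2-\gq_1}\big(\sin\gq_2-\sin\gq_1,\ \cos\gq_1-\cos\gq_2\big).\]
Taking $\gq_1=\ga+(j-1)\frac{\gb-\ga}n$ and $\gq_2=\ga+j\frac{\gb-\ga}n$, with midpoint $m_j=\ga+(2j-1)\frac{\gb-\ga}{2n}$ and half-width $h=\frac{\gb-\ga}{2n}$, the identities $\sin(m_j+h)-\sin(m_j-h)=2\cos m_j\sin h$ and $\cos(m_j-h)-\cos(m_j+h)=2\sin m_j\sin h$ collapse this centroid to $\frac{\sin h}h(\cos m_j,\sin m_j)$. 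Since $\frac{\sin h}h=\frac{2n}{\gb-\ga}\sin\frac{\gb-\ga}{2n}$, this is exactly the $j$th element of $\ga_n$, confirming the necessary condition.

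For optimality and the error I would argue that an optimal set of $n$-means partitions $S$, through its Voronoi regions, into exactly $n$ connected subarcs, each quantizer being the centroid of its own subarc; writing the $i$th subarc with angular half-width $h_i$, the constraint is $\sum_{i=1}^n h_i=\frac{\gb-\ga}2$. A direct integration of the squared distance $1-2\frac{\sin h_i}{h_i}\cos(\gq-m_i)+\big(\frac{\sin h_i}{h_i}\big)^2$ over the $i$th subarc shows its distortion contribution equals $\frac 1{\gb-\ga}\big(2h_i-\frac{2\sin^2 h_i}{h_i}\big)$, so the total distortion is $\frac 1{\gb-\ga}\sum_i f(h_i)$ with $f(h)=2h-\frac{2\sin^2h}h$. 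A convexity (Jensen-type) argument for $f$ on the range of admissible widths forces all $h_i$ equal to $\frac{\gb-\ga}{2n}$, the candidate configuration. Substituting $h=\frac{\gb-\ga}{2n}$ and using $\sin^2h=\frac 12(1-\cos 2h)$ with $2h=\frac{\gb-\ga}n$ gives
\[V_n=\frac n{\gb-\ga}\,f(h)=1-\frac{4n^2\sin^2h}{(\gb-\ga)^2}=\frac{(\ga-\gb)^2-2n^2+2n^2\cos\frac{\ga-\gb}n}{(\ga-\gb)^2},\]
as claimed.

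The main obstacle is the structural step in the optimality argument: showing that the Voronoi regions of an optimal configuration meet the arc $S$ in single connected subarcs, with no region empty, so that the distortion really does reduce to $\frac 1{\gb-\ga}\sum_i f(h_i)$. A convex Voronoi cell can in principle cut a circle in two disjoint arcs, so this requires exploiting that each quantizer is an interior centroid of the piece it serves together with the one-dimensional ordering of the support along the arc; one must also check that $f$ behaves convexly enough on the widths that actually occur (those cannot be near the full $2\pi$ once $n\geq2$). Once this reduction to the width-optimization is secured, the remaining work is the elementary minimization of $\sum_i f(h_i)$ and the routine trigonometric simplification carried out above.
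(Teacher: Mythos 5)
Your computational skeleton is sound and, in fact, more detailed than what the paper itself supplies: the paper's entire proof consists of observing that the density on $S$ is $\frac 1{\gb-\ga}$ and deferring to the analogous theorem in \cite{RR2}. Your reduction to the angular variable, the centroid identity $\frac{\sin h}{h}(\cos m_j,\sin m_j)$ with $\frac{\sin h}{h}=\frac{2n}{\gb-\ga}\sin\frac{\gb-\ga}{2n}$, the per-subarc distortion $\frac 1{\gb-\ga}\big(2h_i-\frac{2\sin^2 h_i}{h_i}\big)$, and the final simplification to $\frac{(\ga-\gb)^2-2n^2+2n^2\cos\frac{\ga-\gb}{n}}{(\ga-\gb)^2}$ are exactly the ingredients of that cited argument, and each of your formulas checks out.

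The two steps you flag as obstacles are, however, genuine gaps, and one of them would fail if carried out literally. For the Jensen step: with $f(h)=2h-\frac{2\sin^2 h}{h}$ one finds $f''(h)=-N(2h)/h^3$ where $N(u)=u^2\cos u-2u\sin u+2-2\cos u$; since $N(0)=0$ and $N'(u)=-u^2\sin u$, one gets $N\leq 0$ on $[0,\pi]$, so $f$ is convex for $h\leq\frac{\pi}{2}$, which covers every admissible configuration once $\gb-\ga\leq\pi$. But $N$ turns positive near $u\approx 4.1$, so $f$ is \emph{not} convex on all of $(0,\pi)$; when $\gb-\ga$ is close to $2\pi$, a competitor configuration with one half-width $h_i$ beyond roughly $2.05$ escapes the convexity range, and plain Jensen does not apply. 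You need a supplementary estimate (e.g., a direct lower bound showing $f(h_i)$ for such a wide cell already exceeds the total error of the equal split) before the width-optimization closes. The structural step is also not free: a single half-plane meets the circle in one arc, but the Voronoi cell is an intersection of $n-1$ such half-planes and an intersection of arcs can be disconnected, so the claim that each optimal Voronoi region meets $S$ in one nonempty subarc must be argued, for instance by using Proposition~\ref{prop0} to place each optimal point at the centroid of its region (hence radially inside the subarc it serves) and then exploiting the cyclic ordering of those centroids. Neither gap is fatal, but as written your proposal establishes the centroid (necessary-condition) half and the error evaluation, not the optimality half.
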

\begin{proof}
Notice that $S$ is an arc of the unit circle $x_1^2+x_2^2=1$ which subtends a central angle of $\gb-\ga$ radian, and the probability distribution is uniform on $S$. Hence, the density function is given by $f(x_1, x_2)=\frac 1{\gb-\ga}$ if $(x_1, x_2)\in S$, and zero, otherwise. Thus, the proof follows in the similar way as the proof in the similar theorem in \cite{RR2}.
\end{proof}

Mixed distributions are an exciting new area for optimal quantization. For any two Borel probability measures $P_1$ and $P_2$, and $p\in (0, 1)$, if $P:=p P_1+(1-p)P_2$, then the probability measure $P$ is called the \tit{mixture} or the \tit{mixed distribution} generated by the probability measures $(P_1, P_2)$ associated with the probability vector $(p, 1-p)$. Such kind of problems has rigorous applications in many areas including signal processing. For example, while driving long distances, we have seen sometimes cellular signals get cut off. This happens because of being far away from the tower, or there is no tower nearby to catch the signal. In optimal quantization for mixed distributions one of our goals is to find the exact locations of the towers by giving different weights, also called importance, to different portions of a path.

The following theorem about the quantization dimension for the mixed distributions is well-known. For some more details please see \cite[Theorem~2.1]{L}.
\begin{theorem}  \label{the01}
Let $P_1$ and $P_2$ be any two Borel probability measures on $\D R^d$ such that both $D(P_1)$ and $D(P_2)$ exist. If $P=pP_1+(1-p)P_2$, where $0<p<1$, then $D(P)=\max\set{D(P_1), D(P_2)}$.
\end{theorem}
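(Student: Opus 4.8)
The plan is to exploit the linearity of the distortion functional in the underlying measure together with the $\ep$-characterization of the quantization dimension. Since $P = pP_1 + (1-p)P_2$, for any finite set $\ga \ci \D R^d$ the distortion splits as
\[V(P; \ga) = p\, V(P_1; \ga) + (1-p)\, V(P_2; \ga),\]
because the integrand $\min_{a \in \ga}\|x - a\|^2$ is the same in all three integrals. First I would record the two consequences of this identity that drive the argument. For a lower bound, if $\ga^*$ is an optimal set of $n$-means for $P$, then $V(P_i; \ga^*) \ge V_n(P_i)$ by optimality (as $\ga^*$ has at most $n$ points and $V_m(P_i)$ is non-increasing in $m$), so dropping one nonnegative term gives $V_n(P) \ge p\, V_n(P_1)$ and likewise $V_n(P) \ge (1-p)\, V_n(P_2)$. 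For an upper bound, taking optimal sets $\ga_1, \ga_2$ of $n_1$- and $n_2$-means for $P_1$ and $P_2$ with $n_1 + n_2 \le n$ and setting $\ga = \ga_1 \uu \ga_2$, monotonicity in $\ga$ yields
\[V_n(P) \le V(P; \ga) \le p\, V_{n_1}(P_1) + (1-p)\, V_{n_2}(P_2).\]

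Write $D_1 = D(P_1)$, $D_2 = D(P_2)$ and assume without loss of generality $D_1 \ge D_2$, so that $\max\set{D_1, D_2} = D_1$. Next I would translate the hypothesis that $D(P_i)$ exists into the two-sided estimate: for every $\ep > 0$ there is $N$ such that $m^{-2/(D_i - \ep)} < V_m(P_i) < m^{-2/(D_i + \ep)}$ for all $m \ge N$. For the lower bound on $D(P)$, feeding $V_n(P) \ge p\, V_n(P_1)$ into the defining quotient and noting that the additive constant $-\log p$ is negligible against $-\log V_n(P_1) \to \infty$ shows $\liminf_n \frac{2\log n}{-\log V_n(P)} \ge D_1$; the companion inequality with $P_2$ gives $\ge D_2$, hence $\liminf_n \ge \max\set{D_1, D_2}$.

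The delicate half is the matching upper bound, and this is where the maximum emerges. Here I would split $n$ into two comparable pieces, say $n_1 = n_2 = \lfloor n/2\rfloor$, and substitute the one-sided bounds into the upper estimate:
\[V_n(P) < p\, n_1^{-2/(D_1+\ep)} + (1-p)\, n_2^{-2/(D_2+\ep)}.\]
Because $D_1 \ge D_2$ forces $2/(D_1 + \ep) \le 2/(D_2 + \ep)$, the term carrying the larger dimension decays more slowly and therefore dominates; bounding both $n_1, n_2$ below by a fixed multiple of $n$ shows $V_n(P) \le C\, n^{-2/(D_1 + \ep)}$ for large $n$, where $C$ depends only on $\ep$. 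Taking $-\log$ and forming the quotient gives $\limsup_n \frac{2\log n}{-\log V_n(P)} \le D_1 + \ep$ for every $\ep > 0$, hence $\limsup_n \le D_1 = \max\set{D_1, D_2}$. Combining the two bounds shows the limit exists and equals $\max\set{D_1, D_2}$, which is $D(P)$. The main obstacle is precisely verifying that the slower-decaying (larger-dimension) term controls the sum in the upper bound uniformly in $n$; the even split is what makes both pieces scale like $n$ and lets the comparison of exponents do the rest.
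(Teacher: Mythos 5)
The paper does not actually prove this theorem: it is quoted as known and attributed to Theorem~2.1 of Lindsay's dissertation \cite{L}, so there is no in-paper argument to compare yours against. Your proof is correct and is, in substance, the standard argument from that literature: the exact identity $V(P;\ga)=p\,V(P_1;\ga)+(1-p)\,V(P_2;\ga)$, the lower bound $V_n(P)\geq \max\set{p\,V_n(P_1),\,(1-p)\,V_n(P_2)}$ obtained by testing an optimal $n$-point codebook for $P$ against each component, and the upper bound obtained from the union of near-optimal codebooks of size $\lfloor n/2\rfloor$ for each component, with the observation that the term carrying the larger dimension decays more slowly and controls the sum. The quantitative steps all check out: the additive constants $-\log p$ and $\log C$ are washed out by $-\log V_n(P_1)\to\infty$ and $\log n\to\infty$ respectively, and the even split does give $n_1,n_2\geq n/3$, so $V_n(P)\leq C\,n^{-2/(D_1+\ep)}$ with $C$ independent of $n$. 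Two small points are worth flagging. First, the lower-bound step tacitly needs $V_n(P_1)>0$ and $V_n(P_1)\to 0$ so that $-\log V_n(P_1)\to+\infty$; this is the standard nondegeneracy hidden in the hypothesis that $D(P_i)$ exists as a finite positive number, and in the degenerate cases ($V_n(P_i)=0$ eventually, so $D_i=0$) that half of the bound is vacuous but also unnecessary. Second, the left inequality $m^{-2/(D_i-\ep)}<V_m(P_i)$ in your two-sided estimate only makes sense when $\ep<D_i$, but you never actually use it --- the lower bound on $D(P)$ goes directly through the convergence of $2\log n/(-\log V_n(P_1))$ --- so nothing is lost. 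With those caveats understood, the argument is complete and self-contained.
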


In this paper, in Section~\ref{sec2}, we have considered a mixed distribution generated by two uniform distributions on a circle and on one of its diameters associated with the probability vector $(\frac 12, \frac 12)$. For this mixed distribution, in Theorem~\ref{Th0}, we have explicitly determined the optimal sets of $n$-means and the $n$th quantization errors for all positive integers $n\geq 2$. In Proposition~\ref{prop90}, we have proved that the quantization dimension $D(P)$ of the mixed distribution is one, which supports Theorem~\ref{the01} because $D(P_1)=D(P_2)=1$, and the quantization coefficient exists as a finite positive number which equals $\frac{3}{8} \left(4+\pi ^2\right)$. Optimal sets of $n$-means and the $n$th quantization errors are calculated, in Section~\ref{sec3}, for the mixture of two uniform distributions on two disconnected line segments $[0, \frac 12]$ and $[\frac 34, 1]$ associated with the probability vector $(\frac 34, \frac 14)$, and in Section~\ref{sec4}, for the mixture of two uniform distributions on two connected line segments $[0, \frac 12]$ and $[\frac 12, 1]$ associated with the probability vector $(\frac 34, \frac 14)$. We would like to mention that in these two sections, to determine the optimal sets of $n$-means and the $n$th quantization errors for the mixed distributions we need to take the help of two different sequences $\set{a(n)}_{n=1}^\infty$ given by Definition~\ref{defi59}, and Definition~\ref{defi60}. If the probability vector $(\frac 34, \frac 14)$ is replaced by some other probability vector $(p, 1-p)$, where $0<p<1$, what will be the two such sequences are not known yet. In fact, optimal sets of $n$-means and the $n$th quantization errors are not known yet for a more general mixed distribution.

\section{Quantization for a mixed distribution on the circles including a diameter} \label{sec2}

Let $i$ and $j$ be the unit vectors in the positive directions of the $x_1$- and $x_2$-axes, respectively. By the position vector $a$ of a point $A$, it is meant that $\overrightarrow{OA}=a$. We will identify the position vector of a point $(a_1, a_2)$ by $(a_1, a_2):=a_1 i +a_2 j$, and apologize for any abuse in notation. For any two position vectors $  a:=( a_1, a_2)$ and $  b:=( b_1, b_2)$, we write $\rho(a, b):=\|(a_1, b_1)-(a_2, b_2)\|^2=(a_1-a_2)^2 +(b_1-b_2)^2$, which gives the squared Euclidean distance between the two points $(a_1, a_2)$ and $(b_1, b_2)$.  Let $P$ and $Q$ belong to an optimal set of $n$-means for some positive integer $n$, and let $D$ be a point on the boundary of the Voronoi regions of the points $P$ and $Q$. Since the boundary of the Voronoi regions of any two points is the perpendicular bisector of the line segment joining the points, we have
$|\overrightarrow{DP}|=|\overrightarrow{DQ}|, \te{ i.e., } (\overrightarrow{DP})^2=(\overrightarrow{DQ})^2$ implying
$(  p-  d)^2=(  q-  d)^2$, i.e., $\rho(  d,   p)-\rho(  d,   q)=0$. We call such an equation a \tit{canonical equation}.
By $E(X)$ and $V:=V(X)$, we represent the expectation and the variance of a random variable $X$ with respect to the probability distribution under consideration.

Let $P_1$ be the uniform distribution defined on the circle $x_1^2+x_2^2=1$ with center $O(0, 0)$, and $P_2$ be the uniform distribution on one of its diameters. Let us denote the diameter by $L_1$ and the circle by $L_2$. Without any loss of generality, we can assume that the diameter is horizontal, i.e., the diameter is represented by $L_1:=\set{(x_1, 0) : -1\leq x_1\leq 1}$ which intersects the circle at the two points $A(-1, 0)$ and $B(0, 1)$. Let $L$ be the path formed by the circle and the diameter $AB$. Thus, we have $L=L_1\uu L_2$, where
\[ L_1=\set{(t, 0) : -1\leq t\leq 1}, \te{ and } L_2=\set{(\cos \gq, \sin \gq) : 0\leq \gq\leq 2\pi}.\]
Let $s$ represent the distance of any point on $L$ from the origin tracing along the boundary $L$ in the positive direction of the $x_1$-axis, and in the counterclockwise direction.  Thus, $s=1$ represents the point $B(1, 0)$, $s=1+\frac \pi 2$ represents the point $(0,-1)$, and so on. Take the mixed distribution $P$ as
\[P:=\frac 12 P_1+\frac 12 P_2, \]
i.e., $P$ is generated by $(P_1, P_2)$ associated with the probability vector $(\frac 12, \frac 12)$.  For this mixed distribution $P$ in this section, we determine the optimal sets of $n$-means and the $n$th quantization errors for all $n\in \D N$.
The probability density function (pdf) $f(x_1, x_2)$ for the mixed distribution $P$ is given by
\begin{align*}
 f(x_1, x_2)=\left\{\begin{array}{cc}
\frac 1 4 & \te{ if } (x_1, x_2) \in L_1,\\
\frac  1 {4 \pi} & \te{ if }  (x_1, x_2) \in L_2.
\end{array}\right.
\end{align*}
On $L_1$ we have $ds=\sqrt{(\frac {dx_1}{dt})^2 +(\frac {dx_2}{dt})^2} \, dt=dt$ yielding $dP(s)=P(ds)=f(x_1, x_2)ds=\frac 14 dt$. Similarly, on $L_2$, we have $ds=d\gq$  yielding $dP(s)=P(ds)=f(x_1, x_2)ds=\frac 1{4 \pi} d\gq$.
\begin{lemma}
Let $X$ be a continuous random variable with mixed  distribution taking values on $L$. Then,
\[E(X)=(0, 0) \te{ and } V:=V(X)=\frac 23 .\]
\end{lemma}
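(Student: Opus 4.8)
The plan is to read off both quantities directly from the density $f$, using the arc-length relations $dP(s)=\frac14\,dt$ on $L_1$ and $dP(s)=\frac{1}{4\pi}\,d\theta$ on $L_2$ recorded just above the statement. Since $P=\frac12 P_1+\frac12 P_2$ is a mixture, the two contributions to any integral over $L$ add, so every computation splits along the decomposition $L=L_1\cup L_2$.

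First I would establish $E(X)=(0,0)$. Both the diameter $L_1$ and the circle $L_2$ are symmetric about the origin and $f$ is constant on each piece, so $P$ is invariant under the reflection $x\mapsto -x$; hence each coordinate of $E(X)$ is the integral of an odd function against a symmetric measure and vanishes. If one prefers an explicit verification, one writes $E(X)=\int_{L_1}(t,0)\,\tfrac14\,dt+\int_{L_2}(\cos\gq,\sin\gq)\,\tfrac1{4\pi}\,d\gq$ and uses $\int_{-1}^1 t\,dt=0$ together with $\int_0^{2\pi}\cos\gq\,d\gq=\int_0^{2\pi}\sin\gq\,d\gq=0$.

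Next, because $E(X)=(0,0)$, the variance collapses to the second moment, $V=E(\|X\|^2)=\int_L (x_1^2+x_2^2)\,dP$, and I would evaluate this piecewise. On $L_1$ one has $x_1^2+x_2^2=t^2$, so the contribution is $\int_{-1}^1 t^2\cdot\tfrac14\,dt=\tfrac14\cdot\tfrac23=\tfrac16$. On $L_2$ one has $x_1^2+x_2^2=1$ identically, so the contribution is $\int_0^{2\pi} 1\cdot\tfrac1{4\pi}\,d\gq=\tfrac12$. Adding the two gives $V=\tfrac16+\tfrac12=\tfrac23$.

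There is essentially no hard step here: the computation is routine once the measure is correctly split into its two pieces. The only points that require care are, first, recording $E(X)=(0,0)$ \emph{before} invoking $V=E(\|X\|^2)$, since otherwise one would have to subtract the mean; and second, using the correct parametrisations so that the constant densities $\tfrac14$ and $\tfrac1{4\pi}$ are integrated against $dt$ and $d\gq$ respectively. The symmetry observation streamlines the expectation, but even a direct integration is immediate.
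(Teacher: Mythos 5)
Your proof is correct and follows essentially the same route as the paper: split the integral along $L=L_1\cup L_2$ with densities $\tfrac14\,dt$ and $\tfrac1{4\pi}\,d\theta$, get $E(X)=(0,0)$, and then compute $V=E\|X-E(X)\|^2$ as $\tfrac16+\tfrac12=\tfrac23$. The symmetry remark for the expectation is a harmless streamlining of the paper's direct integration.
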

\begin{proof} We have,
\begin{align*} \label{eq1}
&E(X)  =\int_L(x_1 i+x_2 j) dP=\frac 1 4\int_{L_1}  (t, 0) \,dt +\frac 1{4 \pi} \int_{L_2}(\cos \gq, \sin \gq) \,d\gq=(0,0).
\end{align*}  To calculate the variance, we know that $V(X)=E\|X-E(X)\|^2$, which implies
\begin{align*} V(X)=\frac 1 4\int_{L_1}  \rho((t, 0), (0, 0))\,dt +\frac 1{4 \pi} \int_{L_2}\rho((\cos \gq, \sin \gq), (0, 0)) \,d\gq=\frac{2}{3}.
\end{align*}
Thus, the lemma is yielded.
\end{proof}

\begin{remark} \label{remark45} Using the standard theory of probability, for any $(a, b) \in \D R^2$, we have
\begin{align*}
&E\|X-(a, b)\|^2=\int_L\|(x_1, x_2)-(a, b)\|^2dP=V(X)+  \|(a, b)-(0, 0)\|^2,
\end{align*}
which is minimum if $(a, b)=(0, 0)$, and the minimum value is $V(X)$.
Thus, we see that the optimal set of one-mean is the set $\set{(0, 0)}$, and the corresponding quantization error is the variance $V:=V(X)$ of the random variable $X$ (see Figure~\ref{Fig0}~$(i)$).
\end{remark}

\begin{figure}
\begin{tikzpicture}[line cap=round,line join=round,>=triangle 45,x=1.0cm,y=1.0cm]
\clip(-2.1431409553721457,-2.454073612323069) rectangle (2.119731706345336,2.1580023224116287);
\draw(0.,0.) circle (2.cm);
\draw (-1.9739043510998817,0.)-- (2.0260956489001183,0.);
\draw [dash pattern=on 1pt off 1pt,color=ffqqqq] (0.,-2.)-- (0.,2.);
\begin{scriptsize}
\draw [fill=ffqqqq] (0.,-2.) circle (0.5pt);
\draw [fill=ffqqqq] (0.,0.) circle (1.5pt);
\draw[] (0.00096496167399,-2.330438200585347174) node { $(i)$ };
\end{scriptsize}
\end{tikzpicture}
\begin{tikzpicture}[line cap=round,line join=round,>=triangle 45,x=1.0cm,y=1.0cm]
\clip(-2.1431409553721457,-2.454073612323069) rectangle (2.119731706345336,2.1580023224116287);
\draw(0.,0.) circle (2.cm);
\draw (-1.9739043510998817,0.)-- (2.0260956489001183,0.);
\draw [dash pattern=on 1pt off 1pt,color=ffqqqq] (0.,-2.)-- (0.,2.);
\draw (-1.1435572534639595,0.5206572340693282) node[anchor=north west] {$p_1$};
\draw (1.0701432039302117,0.5037587572953268) node[anchor=north west] {$p_2$};
\begin{scriptsize}
\draw [fill=ffqqqq] (0.,-2.) circle (0.5pt);
\draw [fill=ffqqqq] (-1.13662,0.) circle (1.5pt);
\draw [fill=ffqqqq] (1.13662,0.) circle (1.5pt);
\draw[] (0.00096496167399,-2.330438200585347174) node { $(ii)$ };
\end{scriptsize}
\end{tikzpicture}
\begin{tikzpicture}[line cap=round,line join=round,>=triangle 45,x=1.0cm,y=1.0cm]
\clip(-2.1431409553721457,-2.454073612323069) rectangle (2.119731706345336,2.1580023224116287);
\draw(0.,0.) circle (2.cm);
\draw (-1.9739043510998817,0.)-- (2.0260956489001183,0.);
\draw [dash pattern=on 1pt off 1pt,color=ffqqqq] (0.,-2.)-- (0.,2.);
\draw (0.00000764394213067,2.009114050825449) node[anchor=north west] {$p_1$};
\draw (-1.217354207011962,0.06578922181530106) node[anchor=north west] {$p_2$};
\draw (1.1377371110262169,0.09958617536330364) node[anchor=north west] {$p_3$};
\draw [dash pattern=on 1pt off 1pt,color=ffqqqq] (-1.53392,1.28339)-- (0.,0.);
\draw [dash pattern=on 1pt off 1pt,color=ffqqqq] (1.5274782908008957,1.2910499878517385)-- (0.,0.);
\begin{scriptsize}
\draw [fill=ffqqqq] (0.,-2.) circle (0.5pt);
\draw [fill=ffqqqq] (0.,1.75488) circle (1.5pt);
\draw [fill=ffqqqq] (-1.18781,-0.283581) circle (1.5pt);
\draw [fill=ffqqqq] (1.18781,-0.283581) circle (1.5pt);
\draw [fill=ffqqqq] (-1.53392,1.28339) circle (0.5pt);
\draw [fill=ffqqqq] (1.53392,1.28339) circle (0.5pt);
\draw[] (0.00096496167399,-2.330438200585347174) node { $(iii)$ };
\end{scriptsize}
\end{tikzpicture}
\caption{ } \label{Fig0}
\end{figure}

\begin{prop} \label{prop2}
The set $\set{(-\frac 1 4 -\frac 1 {\pi}, 0), (\frac 1 4 +\frac 1 {\pi}, 0)}$ forms the optimal set of two-means, and the corresponding quantization error is given by $V_2=0.343691$.
\end{prop}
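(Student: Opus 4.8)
The plan is to exploit the two reflection symmetries of $P$ to fix the geometry of an optimal pair, and then to reduce the error computation to the single identity $\int_L\|x\|^2\,dP=V=\frac23$ supplied by the lemma preceding Remark~\ref{remark45} (valid because $E(X)=(0,0)$). First I would record a structural constraint: if $\set{p_1,p_2}$ is an optimal set of two-means with Voronoi regions $M_1,M_2$ of masses $m_1,m_2$, then by Proposition~\ref{prop0}$(iii)$ each $p_k$ is the conditional expectation of $M_k$, so $m_1p_1+m_2p_2=\int_L x\,dP=E(X)=(0,0)$. Hence $p_1$ and $p_2$ lie on a single line through the origin, and the Voronoi boundary $\ell$ (the perpendicular bisector of $p_1p_2$) is orthogonal to that line. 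The measure $P$ is invariant under $(x_1,x_2)\mapsto(-x_1,x_2)$ and under $(x_1,x_2)\mapsto(x_1,-x_2)$, and I would use these symmetries to argue that the line carrying $p_1,p_2$ is the $x_1$-axis, so that $\ell$ is the $x_2$-axis and $M_1,M_2$ are the right and left halves of $L$.

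Once the dividing line is the $x_2$-axis, identifying the points is immediate. The right half $A:=L\cap\set{x_1>0}$ has mass $m_A=\frac14\cdot1+\frac1{4\pi}\cdot\pi=\frac12$ and $x_1$-moment $\frac14\int_0^1 t\,dt+\frac1{4\pi}\int_{-\pi/2}^{\pi/2}\cos\gq\,d\gq=\frac18+\frac1{2\pi}$, while its $x_2$-moment vanishes by symmetry across the $x_1$-axis. Thus its centroid is $\big(\frac14+\frac1\pi,0\big)$, and the left centroid is its reflection, giving exactly the asserted set. For the error I would use $\int_A\|x-c\|^2\,dP=\int_A\|x\|^2\,dP-m_A\|c\|^2$ with $c=\big(\frac14+\frac1\pi,0\big)$, together with the analogous identity on the left half and the fact that, by symmetry, $\int_A\|x\|^2\,dP=\int_{L\sm A}\|x\|^2\,dP=\frac12 V=\frac13$. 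This yields $V_2=\frac23-\big(\frac14+\frac1\pi\big)^2\approx 0.343691$.

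The routine part is the three elementary integrals above; the genuine obstacle is optimality, i.e.\ showing that the $x_2$-axis split really beats every competitor, not merely that it satisfies the centroid conditions. Since $\int_L\|x\|^2\,dP$ is a constant, minimizing the distortion over all two-point sets is equivalent to maximizing $G:=m_1\|c_1\|^2+m_2\|c_2\|^2$, where $c_1,c_2$ are the centroids and $m_1,m_2$ the masses of the two half-planes cut out by a line $\ell$; this is a two-parameter problem in the direction and offset of $\ell$. Here the placement with $p_1,p_2$ on the $x_2$-axis is excluded outright, for then $\ell$ is the $x_1$-axis, on which the whole diameter $L_1$ lies, contradicting Proposition~\ref{prop0}$(ii)$ since $P(L_1)>0$. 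For the remaining directions I would parametrize $\ell$ by its angle $\phi$, write $G$ explicitly from the piecewise density, and verify that its maximum is attained at the horizontal split. Controlling $G$ across all tilted, possibly off-center partitions, rather than only verifying a single critical point, is the step I expect to be hardest.
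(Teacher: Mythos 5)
Your computations are correct and your route coincides in outline with the paper's: both reduce by symmetry to the split along the $x_2$-axis, take the two points to be the centroids of the left and right halves of $L$, and then evaluate the error. The differences are in the supporting details, and they mostly favor your version. The paper simply computes $p_1=E(X: X\in \ol{AO}\uu(\te{left half of the circle}))$ by direct integration and then integrates $\rho(\cdot,p_1)$ again to get $V_2=0.343691$; you instead use Proposition~\ref{prop0}$(iii)$ to get the global constraint $m_1p_1+m_2p_2=E(X)=(0,0)$ (so the two points are collinear with the origin), use the identity $\int_A\|x-c\|^2\,dP=\int_A\|x\|^2\,dP-m_A\|c\|^2$ together with $\int_L\|x\|^2\,dP=V=\frac23$ to get the closed form $V_2=\frac23-\bigl(\frac14+\frac1\pi\bigr)^2\approx0.343691$ with almost no integration, and you rule out the degenerate vertical configuration cleanly via Proposition~\ref{prop0}$(ii)$, since the $x_1$-axis carries mass $P(L_1)=\frac12>0$. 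On the one point you flag as unfinished --- verifying that the horizontal split beats every tilted or off-center partition --- you are not missing anything the paper supplies: the paper disposes of this with the single phrase ``without going into much calculation, we can assume that the boundary \ldots lies along the $x_2$-axis,'' i.e.\ it asserts the reduction by symmetry rather than proving it. (Note that invariance of $P$ under the two reflections only shows that the \emph{collection} of optimal sets is reflection-invariant, not that a given optimal set is symmetric, so the assertion does require an argument such as the one-parameter maximization of $G$ you describe.) Your proposal is therefore at least as complete as the paper's proof, and more economical in its computations.
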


\begin{proof}
Since $P$ is a mixed distribution giving the equal weights to both the component probabilities $P_1$ and $P_2$, and the path $L$ is symmetric with respect to the $x_2$-axis, without going into much calculation, we can assume that the boundary of the Voronoi regions of the two points in an optimal set of two-means lies along the $x_2$-axis. Thus, the optimal set of two-means is given by $\set{p_1, p_2}$ (see Figure~\ref{Fig0}~$(ii)$), where
\begin{align*}
p_1&=E(X : X \in \ol{AO } \ \uu (\te{left half of the circle}))=\frac{\frac{1}{4} \int_{-1}^0 (x,0) \, dx+\frac 1 {4\pi} \int_{\frac{\pi }{2}}^{\frac{3 \pi }{2}} (\cos\gq ,\sin\gq ) \, d\theta}{\frac{1}{4} \int_{-1}^0  \, dx+\frac 1{4\pi} \int_{\frac{\pi }{2}}^{\frac{3 \pi }{2}}  \, d\theta}\\
&=(-\frac 1 4 -\frac 1 {\pi}, 0),
\end{align*}
and similarly, $p_2=(\frac 1 4+\frac 1 {\pi}, 0)$. The quantization error for two-means is given by
\[V_2=2\Big(\frac{1}{4} \int_{-1}^0 \rho((x,0), p_1) \, dx+\frac 1 {4\pi} \int_{\frac{\pi }{2}}^{\frac{3 \pi }{2}} \rho((\cos\gq ,\sin\gq ), p_1)\, d\theta\Big)=0.343691.\]
Thus, the proposition is yielded.
\end{proof}

The following proposition gives the optimal set of three-means (see Figure~\ref{Fig0}~$(iii)$). The proof follows in the similar way as Proposition~\ref{prop4} which is given later.

\begin{prop}\label{prop3}
The set $\set{(0, 0.877439), (-0.593906, -0.14179), (0.593906, -0.14179)}$ forms an optimal set of three-means, and the corresponding quantization error is given by $V_3=0.2386$.
\end{prop}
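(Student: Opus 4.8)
The plan is to exploit the symmetry of the path $L$ about the $x_2$-axis, exactly as in the two-means case, but now anticipating a configuration in which one point sits on the axis of symmetry and the remaining two form a mirror-image pair. First I would argue that an optimal set of three-means $\set{p_1, p_2, p_3}$ can be taken symmetric about the $x_2$-axis: since $P$ is invariant under the reflection $(x_1, x_2)\mapsto(-x_1, x_2)$, the reflected image of any optimal set is again optimal, and a standard averaging/uniqueness argument forces the optimal configuration to be symmetric. With three points, symmetry leaves two possibilities: either all three lie on the $x_2$-axis, or one lies on the axis while the other two are reflections of each other. I would rule out the collinear configuration by a direct distortion comparison, so that the configuration in Figure~\ref{Fig0}~$(iii)$, with $p_1=(0, y_1)$ on the axis and $p_2=(-u, v)$, $p_3=(u, v)$, is the only candidate.

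Next I would set up the Voronoi regions for this symmetric configuration. The point $p_1$ at the top claims the top portion of the circle, while $p_2$ and $p_3$ split the rest; the dashed radial lines in Figure~\ref{Fig0}~$(iii)$ indicate that the boundaries between $p_1$ and each of $p_2, p_3$ meet the circle at the angles determined by the canonical equation $\rho(d, p_1)-\rho(d, p_2)=0$. I would locate those boundary points on both $L_1$ (the diameter) and $L_2$ (the circle), expressing the angular cutoffs as unknowns tied to the coordinates $y_1, u, v$. Then I would impose the centroid conditions of Proposition~\ref{prop0}$(iii)$: each $p_i$ must equal the conditional expectation $E(X : X\in M(p_i|\ga))$ computed against the pdf $f$, with the $\frac14$ weight on the relevant sub-arc of $L_1$ and the $\frac1{4\pi}$ weight on the relevant sub-arc of $L_2$. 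These self-consistency equations, together with the boundary equations, form a closed system whose solution should yield the numerical values $y_1\approx 0.877439$, $u\approx 0.593906$, $v\approx -0.14179$ stated in the proposition, and the distortion integral then gives $V_3\approx 0.2386$.

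The main obstacle is the self-referential nature of this system: the Voronoi boundaries depend on the locations of the points, and the points are in turn defined as centroids of the regions cut out by those boundaries. Unlike the two-means case, where symmetry pins the boundary to the $x_2$-axis outright and decouples the computation, here the boundary angles between $p_1$ and the lower pair are genuine unknowns, so the centroid integrals over $L_1$ and $L_2$ must be carried out with variable limits and then fed back into a fixed-point condition. I expect this to require solving a small transcendental system numerically rather than in closed form, which is presumably why the proposition records decimal values. To complete the proof I would verify that the resulting symmetric critical configuration is genuinely optimal—confirming via Proposition~\ref{prop0} that no point has empty or boundary-charged region and that the computed distortion beats any competing arrangement (in particular the rejected collinear one)—and, as the statement indicates, defer the full bookkeeping to the parallel argument given for Proposition~\ref{prop4}.
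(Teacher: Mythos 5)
Your proposal matches the paper's approach: the paper gives no separate proof for this proposition, stating only that it "follows in the similar way as Proposition~\ref{prop4}," whose proof is exactly the case analysis you describe — positing candidate configurations (classified by how many points of $\ga$ lie on $L_1$ with Voronoi regions disjoint from $L_2$), writing each point as the conditional expectation over its region with the $\frac14$ and $\frac1{4\pi}$ weights, closing the system with the canonical equations $\rho(d,p_i)-\rho(d,p_j)=0$ at the Voronoi boundaries, solving numerically, and comparing the resulting distortion errors. Your added remark that the reflection symmetry of $P$ justifies restricting to symmetric configurations is consistent with (indeed slightly more explicit than) the paper's own "we can assume" style of argument.
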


\begin{prop} \label{prop4}
The set $\set{(0, 0.90407), (-0.633881, 0), (0, -0.90407), (0.633881, 0)}$ forms an optimal set of four-means, and the corresponding quantization error is given by $V_4=0.163013$.
\end{prop}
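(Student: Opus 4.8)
The plan is to exploit the reflection symmetry of the path $L$ about both coordinate axes, exactly as in the proof of Proposition~\ref{prop2}. First I would argue that there is an optimal set of four-means invariant under both reflections. A generic orbit of this reflection group has four points only when it forms a genuine rectangle $\set{(\pm p,\pm q)}$; otherwise it collapses onto the axes. Hence the symmetric candidates fall into two families: the \emph{rectangle} family $\set{(\pm p,\pm q)}$, and the \emph{axial} family consisting of two means $(0,\pm a)$ on the vertical axis together with two means $(\pm b,0)$ on the diameter $L_1$. The claimed set belongs to the axial family, so the first task is to pin down the axial configuration and then to rule out the rectangle family.

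For the axial family I would describe the Voronoi regions. By the left–right symmetry the diameter is split at the origin, so the right mean $(b,0)$ controls the right half $\set{(t,0):0\le t\le 1}$ of $L_1$. A short distance comparison shows the vertical means own no part of $L_1$: for $(t,0)$ with $t>0$ one has $t^2+a^2-(t-b)^2=a^2-b^2+2bt>0$ whenever $b<a$ (which holds at the solution), so $(t,0)$ is always closer to $(b,0)$ than to $(0,a)$. Writing $\theta^\ast$ for the angle at which the perpendicular bisector of $(0,a)$ and $(b,0)$ meets $L_2$, the top mean then controls the arc $\theta\in[\theta^\ast,\pi-\theta^\ast]$, and the right mean controls the arc $\theta\in[-\theta^\ast,\theta^\ast]$ together with the half-diameter above; the bottom and left means are the mirror images.

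Next I would impose the centroid conditions of Proposition~\ref{prop0}$(iii)$ together with the canonical equation for the point $(\cos\theta^\ast,\sin\theta^\ast)\in L_2$. Vertical symmetry of each region forces the transverse coordinate of every centroid to vanish automatically, so only the longitudinal coordinates produce equations. Computing the arc centroid in the manner of Theorem~\ref{th32} and the segment centroid in the manner of Theorem~\ref{th31} yields
\[
a=\frac{2\cos\theta^\ast}{\pi-2\theta^\ast},\qquad b=\frac{\frac 18+\frac{\sin\theta^\ast}{2\pi}}{\frac 14+\frac{\theta^\ast}{2\pi}},\qquad 2b\cos\theta^\ast-2a\sin\theta^\ast=b^2-a^2 .
\]
Solving this $3\times 3$ system numerically gives $\theta^\ast\approx 0.8$, and hence $a=0.90407$, $b=0.633881$, matching the stated set. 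The error $V_4$ is then obtained by summing $\frac 14\int\rho$ over the two half-diameters and $\frac 1{4\pi}\int\rho$ over the four arcs of the squared distance to the owning mean; the four contributions collapse to two by symmetry and evaluate to $0.163013$.

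The main obstacle is optimality rather than the existence of this critical configuration, since the centroid and canonical equations are only necessary. The decisive comparison is against the rectangle family $\set{(\pm p,\pm q)}$: I would set up its analogous centroid equations and verify that its least distortion strictly exceeds $0.163013$. I would also exclude degenerate patterns in which the Voronoi structure changes, for instance a vertical mean absorbing part of the diameter (which the inequality $b<a$ forbids), or a configuration with three means serving $L_2$ and one serving $L_1$; by the symmetry reduction underlying Proposition~\ref{prop2}, no asymmetric arrangement should beat the symmetric ones. Confirming that the displayed system has a unique admissible root with $0<\theta^\ast<\tfrac\pi2$ and $b<a$, and that the resulting value is the global minimum, is the delicate part, whereas everything else reduces to the routine arc-and-segment integration already packaged in Theorems~\ref{th31} and~\ref{th32}.
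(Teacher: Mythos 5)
Your axial configuration is exactly the paper's winning candidate (its Case~2): the centroid formulas $a=\frac{2\cos\theta^\ast}{\pi-2\theta^\ast}$ and $b=\frac{\pi+4\sin\theta^\ast}{2\pi+4\theta^\ast}$, the canonical equation at $(\cos\theta^\ast,\sin\theta^\ast)$, the root $\theta^\ast=0.800791$, and the resulting points and error $0.163013$ all agree with the paper, and your check that the vertical means own no part of $L_1$ when $b<a$ is a correct supplement. The genuine gap is in the optimality half. You begin by reducing to sets invariant under \emph{both} coordinate reflections and then classify the orbits as ``rectangle'' or ``axial.'' But optimality of a set $\ga$ only implies optimality of each reflected copy of $\ga$; it does not produce an optimal set fixed by the whole reflection group. (For the uniform distribution on a circle and $n=3$, for instance, no optimal set is invariant under both reflections, so this reduction is not a valid general principle.) That assumption is precisely what lets you discard, without any computation, the competitor the paper actually has to rule out: its Case~1, a configuration symmetric about the $x_2$-axis but \emph{not} the $x_1$-axis, with one point at the origin serving only the middle portion of the diameter, one point serving only the top arc, and two points each serving an end of the diameter together with a lower arc. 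The paper computes its distortion ($0.21596>0.163013$) and argues that configurations with more such ``diameter-only'' points are worse still.

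So your rectangle-versus-axial comparison, while a sensible extra check, does not substitute for the comparison that carries the proof: the dangerous competitors are not doubly symmetric. To close the gap you would need either to justify the double-symmetry reduction for this specific $P$ and $n=4$, or to do what the paper does and enumerate candidates by how many points of $\ga_4$ serve only $L_1$ (zero versus one versus more), computing the critical configuration and its distortion in each case. With that comparison added, the rest of your argument --- the Voronoi description, the three-equation system, and the arc-and-segment integration via Theorems~\ref{th31} and~\ref{th32} --- matches the paper's proof.
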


\begin{proof} Let $\ga:=\set{p_1, p_2, p_3, p_4}$ be an optimal set of four-means. The following cases can arise:

\tit{Case~1. $\ga$ contains one point from $L_1$, the Voronoi region of which does not contain any point from $L_2$.}

In this case, we can assume that $p_1, p_2, p_3, p_4$ can be located as shown in Figure~\ref{Fig01}~$(i)$. Let the boundary of the Voronoi regions of $p_1$ and $p_2$ intersect $L_2$ at the point $d_1$ given by the parametric value $\gq=\pi-b$, where $0<b<\frac {\pi}2$, and the boundary of the Voronoi regions of $p_2$ and $p_3$ intersect $L_1$ at the point $d_2$ given by $x_1=-a$, where $0<a<1$. Thus, due to symmetry, we have
\begin{align*} p_1&=\frac{\int_b^{\pi -b} (\cos \theta ,\sin \theta )\, d\theta }{\int_b^{\pi -b}   d\theta }=\Big(0,\frac{2 \cos b}{\pi -2 b}\Big),\\
p_2&=\frac{\frac{1}{4} \int_{-1}^{-a} (x,0) \, dx+\frac1{4\pi} \int_{\pi -b}^{\frac {3\pi}2} (\cos \theta,\sin \theta) \, d\theta}{\frac{1}{4} \int_{-1}^{-a} dx+\frac1{4\pi} \int_{\pi -b}^{\frac {3\pi}2}  d\theta}=\Big(\frac{-\pi  a^2+2 \sin b+\pi +2}{\pi  (2 a-3)-2 b},-\frac{2 \cos b}{-2 \pi  a+2 b+3 \pi }\Big), \\
p_3&=(0, 0), \quad d_1=(-\cos b, \sin b), \te{ and } d_2=(-a, 0).
\end{align*}
Thus, solving the canonical equations $\rho(d_1, p_1)-\rho(d_1, p_2)=0$, and $\rho(d_2, p_2)-\rho(d_2, p_3)=0$, we have $a=0.377997$, $b=0.678642$. Hence, putting the values of $a$ and $b$ we have, $p_1=(0, 0.872524)$, $p_2=(-0.707525, -0.185184)$, and $p_3=(0, 0)$, and so, due to symmetry, $p_4=(0.707525, -0.185184)$. The corresponding distortion error is given by
\begin{align*}
V(P; \ga)&=\frac 1{4\pi} \int_b^{\pi -b} \rho((\cos \theta ,\sin \theta ), p_1)\, d\theta+2\Big(\frac{1}{4} \int_{-1}^{-a} \rho((x,0), p_2) \, dx\\
&\qquad  +\frac1{4\pi} \int_{\pi -b}^{\frac {3\pi}2} \rho((\cos \theta,\sin \theta), p_2) \, d\theta\Big)
+\frac{1}{4} \int_{-a}^{a} \rho((x,0), p_3) \, dx=0.21596.
\end{align*}

\tit{Case~2. $\ga$ does not contain any point from $L_1$, the Voronoi region of which does not contain any point from $L_2$.}

In this case, we can assume that $p_1, p_2, p_3, p_4$ can be located as shown in Figure~\ref{Fig01}~$(ii)$. Let the boundary of the Voronoi regions of $p_1$ and $p_2$ intersect $L_2$ at the point $d_1$ given by the parametric value $\gq=\pi-b$, where $0<b<\frac {\pi}2$. Thus, due to symmetry, we have
\begin{align*} p_1&=\frac{\int_b^{\pi -b} (\cos \theta ,\sin \theta )\, d\theta }{\int_b^{\pi -b}   d\theta }=\Big(0,\frac{2 \cos b}{\pi -2 b}\Big),\\
p_2&=\frac{\frac{1}{4} \int_{-1}^0 (x,0) \, dx+\frac1{4\pi} \int_{\pi -b}^{\pi+b} (\cos \theta,\sin \theta) \, d\theta}{\frac{1}{4} \int_{-1}^0 dx+\frac1{4\pi} \int_{\pi -b}^{\pi+b}  d\theta}=\Big(-\frac{4 \sin b+\pi }{4 b+2 \pi },0\Big), \te{ and}\\
d_1&=(-\cos b, \sin b).
\end{align*}
Thus, solving the canonical equations $\rho(d_1, p_1)-\rho(d_1, p_2)=0$, we have $b=0.800791$. Hence, putting the values of $b$, we have, $p_1=(0, 0.90407)$, $p_2=(-0.633881, 0)$, and so, due to symmetry, $p_3=(0.633881, 0)$, and $p_4=(0, -0.90407)$. The corresponding distortion error is given by
\begin{align*}
V(P; \ga)&=2\Big(\frac 1{4\pi}\int_{b}^{\pi -b} \rho((\cos \theta ,\sin \theta ), p_1)\, d\theta +\frac{1}{4} \int_{-1}^{0} \rho((x,0), p_2) \, dx\\
&\qquad  +\frac1{4\pi} \int_{\pi -b}^{\pi+b} \rho((\cos \theta,\sin \theta), p_2) \, d\theta \Big)=0.163013.
\end{align*}
Comparing Case~1 and Case~2, we see that if $\ga$ contains only one point from $L_1$, the Voronoi regions of which does not contain any point from $L_2$, then the distortion error is larger than the distortion error obtained in Case~2. Similarly, we can show that if $\ga$ contains more than one point from $L_1$, the Voronoi regions of which do not contain any point from $L_2$, then the distortion error is larger than the distortion error obtained in Case~2. Considering all the above cases, we see that the distortion error in Case~2 is the smallest. Hence, the points in $\ga$ obtained in Case~2 form an optimal set of four-means, and the corresponding quantization error is given by $V_4=0.163013$.
Thus, the proof of the proposition is complete.
 \end{proof}

\begin{figure}
\begin{tikzpicture}[line cap=round,line join=round,>=triangle 45,x=1.0cm,y=1.0cm]
\clip(-2.1431409553721457,-2.454073612323069) rectangle (2.119731706345336,2.1580023224116287);
\draw(0.,0.) circle (2.cm);
\draw (-1.9739043510998817,0.)-- (2.0260956489001183,0.);
\draw [dash pattern=on 1pt off 1pt,color=ffqqqq] (0.,-2.)-- (0.,2.);
\draw (0.00043764394213067,2.109114050825449) node[anchor=north west] {$p_1$};
\draw (-1.4539328818479804,0.03060140762329074) node[anchor=north west] {$p_2$};
\draw (-0.03125778637987325,0.46996180374732427) node[anchor=north west] {$p_3$};
\draw (1.3743157858622352,0.0812968379452946) node[anchor=north west] {$p_4$};
\draw (-2.108871850680196,1.6697536547014158) node[anchor=north west] {$d_1$};
\draw (-0.869384671531936,0.4868602805213256) node[anchor=north west] {$d_2$};
\draw [dash pattern=on 1pt off 1pt,color=ffqqqq] (1.55685,1.25547)-- (0.755994,0.);
\draw [dash pattern=on 1pt off 1pt,color=ffqqqq] (-0.755994,0.)-- (0.,-2.);
\draw [dash pattern=on 1pt off 1pt,color=ffqqqq] (0.755994,0.)-- (0.,-2.);
\draw [dash pattern=on 1pt off 1pt,color=ffqqqq] (-1.5542442734793633,1.2586996219736486)-- (1.55685,1.25547);
\draw [dash pattern=on 1pt off 1pt,color=ffqqqq] (-1.5542442734793633,1.2586996219736486)-- (-0.755994,0.);
\begin{scriptsize}
\draw [fill=ffqqqq] (0.,-2.) circle (0.5pt);
\draw [fill=ffqqqq] (0.,1.74505) circle (1.5pt);
\draw [fill=ffqqqq] (-1.41505,-0.370369) circle (1.5pt);
\draw [fill=ffqqqq] (0.,0.) circle (1.5pt);
\draw [fill=ffqqqq] (1.41505,-0.370369) circle (1.5pt);
\draw [fill=ffqqqq] (-1.55685,1.25547) circle (0.5pt);
\draw [fill=ffqqqq] (-0.755994,0.) circle (0.5pt);
\draw [fill=ffqqqq] (1.55685,1.25547) circle (0.5pt);
\draw [fill=ffqqqq] (0.755994,0.) circle (0.5pt);
\draw[] (0.00096496167399,-2.330438200585347174) node { $(i)$ };
\end{scriptsize}
\end{tikzpicture}
\begin{tikzpicture}[line cap=round,line join=round,>=triangle 45,x=1.0cm,y=1.0cm]
\clip(-2.1431409553721457,-2.454073612323069) rectangle (2.119731706345336,2.1580023224116287);
\draw(0.,0.) circle (2.cm);
\draw (-1.9739043510998817,0.)-- (2.0260956489001183,0.);
\draw [dash pattern=on 1pt off 1pt,color=ffqqqq] (0.,-2.)-- (0.,2.);
\draw (0.00043764394213067,2.109114050825449) node[anchor=north west] {$p_1$};
\draw (-1.3125420212039725,0.4868602805213256) node[anchor=north west] {$p_2$};
\draw (0.000539167168129362,-1.3888706413928176) node[anchor=north west] {$p_3$};
\draw (1.2391279716702248,0.46996180374732427) node[anchor=north west] {$p_4$};
\draw (-1.8612069870060027,1.8894338527634325) node[anchor=north west] {$d_1$};
\draw [dash pattern=on 1pt off 1pt,color=ffqqqq] (-1.39228,1.43581)-- (0.,0.);
\draw [dash pattern=on 1pt off 1pt,color=ffqqqq] (0.,0.)-- (1.39228,1.43581);
\draw [dash pattern=on 1pt off 1pt,color=ffqqqq] (0.,0.)-- (1.39228,-1.43581);
\draw [dash pattern=on 1pt off 1pt,color=ffqqqq] (0.,0.)-- (-1.39228,-1.43581);
\begin{scriptsize}
\draw [fill=ffqqqq] (0.,-2.) circle (0.5pt);
\draw [fill=ffqqqq] (0.,1.80814) circle (1.5pt);
\draw [fill=ffqqqq] (-1.26776,0.) circle (1.5pt);
\draw [fill=ffqqqq] (0.,-1.80814) circle (1.5pt);
\draw [fill=ffqqqq] (1.26776,0.) circle (1.5pt);
\draw [fill=ffqqqq] (-1.39228,1.43581) circle (0.5pt);
\draw [fill=ffqqqq] (-1.39228,-1.43581) circle (0.5pt);
\draw [fill=ffqqqq] (1.39228,1.43581) circle (0.5pt);
\draw [fill=ffqqqq] (1.39228,-1.43581) circle (0.5pt);
\draw[] (0.00096496167399,-2.330438200585347174) node { $(ii)$ };
\end{scriptsize}
\end{tikzpicture}
\caption{ } \label{Fig01}
\end{figure}

\begin{prop} \label{prop5}
An optimal set of five-means is given by \[\set{(0, 0.903584), (-0.788308, 0), (0, 0), (0, -0.903584), (0.788308, 0)}\] and the corresponding quantization error is $V_5=0.119779$.
\end{prop}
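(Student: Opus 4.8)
The plan is to follow the case-based strategy used in the proof of Proposition~\ref{prop4}, now adapted to five points. First I would exploit the symmetry of $P$: since $P$ gives equal weight to $P_1$ and $P_2$ and the path $L$ is symmetric with respect to both the $x_1$- and $x_2$-axes, the mixed distribution is invariant under the reflections $(x_1,x_2)\mapsto(-x_1,x_2)$ and $(x_1,x_2)\mapsto(x_1,-x_2)$. This lets me restrict attention to optimal sets that respect these two reflections, so the five points split into reflection-paired points together with points fixed on an axis. I would then organize the search by the number $k$ of points of $\ga$ whose Voronoi region lies entirely inside the diameter $L_1$ and contains no arc of the circle $L_2$ --- exactly the dichotomy (``one such point'' versus ``none'') that drove Case~1 and Case~2 of Proposition~\ref{prop4}.

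The candidate configuration is the one with $k=1$: the center $(0,0)$ absorbs a middle segment of the diameter, the points $p_1,p_4$ on the $x_2$-axis absorb the upper and lower arcs, and the paired points $p_2,p_5$ on the $x_1$-axis each absorb a side arc together with an outer segment of the diameter. I would introduce two parameters: $b$, where the boundary of the Voronoi regions of $p_1$ and $p_2$ meets $L_2$ at $d_1=(-\cos b,\sin b)$ (the parametric value $\gq=\pi-b$), and $a$, where the boundary of the Voronoi regions of $p_2$ and $p_3$ meets $L_1$ at $d_2=(-a,0)$. Using Proposition~\ref{prop0}$(iii)$ and the reflection symmetries, the three independent representatives are
\begin{align*}
p_1&=\frac{\int_b^{\pi-b}(\cos\gq,\sin\gq)\,d\gq}{\int_b^{\pi-b}d\gq}=\Big(0,\tfrac{2\cos b}{\pi-2b}\Big),\\
p_2&=\frac{\frac14\int_{-1}^{-a}(x,0)\,dx+\frac1{4\pi}\int_{\pi-b}^{\pi+b}(\cos\gq,\sin\gq)\,d\gq}{\frac14\int_{-1}^{-a}dx+\frac1{4\pi}\int_{\pi-b}^{\pi+b}d\gq},\q p_3=(0,0),
\end{align*}
with $p_4,p_5$ the reflections of $p_1,p_2$; symmetry forces the second coordinate of $p_2$ to vanish.

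Next I would solve the two canonical equations $\rho(d_1,p_1)-\rho(d_1,p_2)=0$ and $\rho(d_2,p_2)-\rho(d_2,p_3)=0$ for $a$ and $b$ (the second reduces, since $p_2,p_3$ both lie on the $x_1$-axis, to the midpoint relation placing $d_2$ halfway between them). These give $a,b$ numerically, hence $p_1=(0,0.903584)$, $p_2=(-0.788308,0)$, $p_3=(0,0)$, and by symmetry $p_4=(0,-0.903584)$, $p_5=(0.788308,0)$. The associated distortion is
\begin{align*}
V(P;\ga)&=2\cdot\frac1{4\pi}\int_b^{\pi-b}\rho((\cos\gq,\sin\gq),p_1)\,d\gq+\frac14\int_{-a}^{a}\rho((x,0),p_3)\,dx\\
&\q+2\Big(\frac14\int_{-1}^{-a}\rho((x,0),p_2)\,dx+\frac1{4\pi}\int_{\pi-b}^{\pi+b}\rho((\cos\gq,\sin\gq),p_2)\,d\gq\Big)=0.119779,
\end{align*}
which is the claimed $V_5$.

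The main obstacle is the optimality comparison: I must rule out every competing symmetric configuration and show each yields a strictly larger distortion. Concretely, I would treat the cases $k=0$ (no diameter-only point, so all five Voronoi regions meet the circle), $k\ge2$ (two or more points buried in the diameter), and degenerate placements such as a point sitting at a pole of the circle, computing the stationary distortion in each and checking that it exceeds $0.119779$. As in Proposition~\ref{prop4}, the delicate part is arguing that no nonsymmetric configuration can beat the symmetric optimum and that the stationary point produced by the canonical equations is a genuine minimum rather than a saddle; once the finite list of cases is exhausted and compared, the configuration with $k=1$ is identified as optimal, completing the proof.
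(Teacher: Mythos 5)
Your proposal follows essentially the same route as the paper: the paper's proof of this proposition likewise splits into cases according to how many points of $\ga$ have Voronoi regions contained in $L_1$ (two such points, one, or none), sets up the same parameters $a$ and $b$ with the same conditional-expectation formulas and canonical equations in the one-point case, obtains $a=0.394154$, $b=0.798783$, and the same distortion $0.119779$, and then compares the three distortion values ($0.18911$, $0.119779$, $0.1355$) to conclude Case~2 is optimal. Your setup, candidate configuration, and comparison strategy all match the paper's argument.
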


\begin{proof} Let $\ga:=\set{p_1, p_2, p_3, p_4, p_5}$ be an optimal set of five-means. The following cases can arise:

\tit{Case~1. $\ga$ contains two points from $L_1$, the Voronoi regions of which do not contain any point from $L_2$.}

In this case, we can assume that $p_1, p_2, \cdots, p_5$ can be located as shown in Figure~\ref{Fig11}~$(i)$. Let the boundary of the Voronoi regions of $p_1$ and $p_2$ intersect $L_2$ at the point $d_1$ given by the parametric value $\gq=\pi-b$, where $0<b<\frac {\pi}2$, and the boundary of the Voronoi regions of $p_2$ and $p_3$ intersect $L_1$ at the point $d_2$ given by $x_1=-a$, where $0<a<1$. Thus, due to symmetry, we have
\begin{align*} p_1&=\frac{\int_b^{\pi -b} (\cos \theta ,\sin \theta )\, d\theta }{\int_b^{\pi -b}   d\theta }=\Big(0,\frac{2 \cos b}{\pi -2 b}\Big),\\
p_2&=\frac{\frac{1}{4} \int_{-1}^{-a} (x,0) \, dx+\frac1{4\pi} \int_{\pi -b}^{\frac {3\pi}2} (\cos \theta,\sin \theta) \, d\theta}{\frac{1}{4} \int_{-1}^{-a} dx+\frac1{4\pi} \int_{\pi -b}^{\frac {3\pi}2}  d\theta}=\Big(\frac{-\pi  a^2+2 \sin b+\pi +2}{\pi  (2 a-3)-2 b},-\frac{2 \cos b}{-2 \pi  a+2 b+3 \pi }\Big), \\
p_3&=(-\frac a 2, 0), \quad d_1=(-\cos b, \sin b), \te{ and } d_2=(-a, 0).
\end{align*}
Thus, solving the canonical equations $\rho(d_1, p_1)-\rho(d_1, p_2)=0$, and $\rho(d_2, p_2)-\rho(d_2, p_3)=0$, we have $a=0.567815$, $b=0.656426$. Hence, putting the values of $a$ and $b$ we have, $p_1=(0, 0.866365)$, $p_2=(-0.74607, -0.220972)$, and $p_3=(-0.283907, 0)$, and so, due to symmetry, $p_4=(0.283907, 0)$, and $p_5=(0.74607, -0.220972)$. The corresponding distortion error is given by
\begin{align*}
V(P; \ga)&=\frac 1{4\pi} \int_b^{\pi -b} \rho((\cos \theta ,\sin \theta ), p_1)\, d\theta+2\Big(\frac{1}{4} \int_{-1}^{-a} \rho((x,0), p_2) \, dx\\
&\qquad  +\frac1{4\pi} \int_{\pi -b}^{\frac {3\pi}2} \rho((\cos \theta,\sin \theta), p_2) \, d\theta
+\frac{1}{4} \int_{-a}^{0} \rho((x,0), p_3) \, dx\Big)=0.18911.
\end{align*}

\tit{Case~2. $\ga$ contains only one point from $L_1$, the Voronoi region of which does not contain any point from $L_2$.}

In this case, we can assume that $p_1, p_2, \cdots, p_5$ can be located as shown in Figure~\ref{Fig11}~$(ii)$. Let the boundary of the Voronoi regions of $p_1$ and $p_2$ intersect $L_2$ at the point $d_1$ given by the parametric value $\gq=\pi-b$, where $0<b<\frac {\pi}2$, the boundary of the Vonoroi regions of $p_2$ and $p_3$ intersect $L_1$ at the point $d_2$ given by $x_1=-a$, where $0<a<1$. Thus, due to symmetry, we have
\begin{align*} p_1&=\frac{\int_{b}^{\pi -b} (\cos \theta ,\sin \theta )\, d\theta }{\int_{b}^{\pi -b}   d\theta }=\Big(0,\frac{2 \cos b}{\pi -2 b}\Big),\\
p_2&=\frac{\frac{1}{4} \int_{-1}^{-a} (x,0) \, dx+\frac1{4\pi} \int_{\pi -b}^{\pi+b} (\cos \theta,\sin \theta) \, d\theta}{\frac{1}{4} \int_{-1}^{-a} dx+\frac1{4\pi} \int_{\pi -b}^{\pi+b}  d\theta}=\Big(-\frac{-\pi  a^2+4 \sin b+\pi }{-2 \pi  a+4 b+2 \pi },0\Big), \\
p_3&=(0, 0),  \quad d_1=(-\cos b, \sin b), \quad  d_2=(-a, 0).
\end{align*}
Thus, solving the canonical equations $\rho(d_1, p_1)-\rho(d_1, p_2)=0$, $\rho(d_2, p_2)-\rho(d_2, p_3)=0$, we have $a=0.394154$, and $b=0.798783$. Hence, putting the values of $a$, and $b$, we have, $p_1=(0, 0.903584)$, $p_2=(-0.788308, 0)$, and $p_3=(0, 0)$, and so, due to symmetry, $p_4=(0, -0.903584)$, and $p_5=(0.788308, 0)$. The corresponding distortion error is given by
\begin{align*}
V(P; \ga)&=2\Big(\frac 1{4\pi} \int_b^{\pi -b} \rho((\cos \theta ,\sin \theta ), p_1)\, d\theta +\frac{1}{4} \int_{-1}^{-a} \rho((x,0), p_2) \, dx\\
&\qquad+\frac1{4\pi} \int_{\pi -b}^{\pi+b} \rho((\cos \theta,\sin \theta),p_2) \, d\theta\Big)+\frac{1}{4} \int_{-a}^{a} \rho((x,0), p_3) \, dx=0.119779.
\end{align*}

\tit{Case~3. $\ga$ does not contain any point from $L_1$, the Voronoi region of which does not contain any point from $L_2$.}

In this case, we can assume that $p_1, p_2, \cdots, p_5$ can be located as shown in Figure~\ref{Fig11}~$(iii)$. Let the boundary of the Voronoi regions of $p_1$ and $p_2$ intersect $L_2$ at the point $d_1$ given by the parametric value $\gq=\pi-b$, where $0<b<\frac {\pi}2$, and the boundary of the Voronoi regions of $p_2$ and $p_3$ intersect $L_2$ as the point $d_2$ given by the parametric value $\gq=\pi+c$, where $0<c<\frac {\pi}2$. Thus, due to symmetry, we have
\begin{align*} p_1&=\frac{\int_{\frac \pi 2}^{\pi -b} (\cos \theta ,\sin \theta )\, d\theta }{\int_{\frac \pi 2}^{\pi -b}   d\theta }=\Big(\frac{2 (\sin b-1)}{\pi -2 b},\frac{2 \cos b}{\pi -2 b}\Big),\\
p_2&=\frac{\frac{1}{4} \int_{-1}^{0} (x,0) \, dx+\frac1{4\pi} \int_{\pi -b}^{\pi+c} (\cos \theta,\sin \theta) \, d\theta}{\frac{1}{4} \int_{-1}^{0} dx+\frac1{4\pi} \int_{\pi -b}^{\pi+c}  d\theta}=\Big(-\frac{2 \sin b+2 \sin c+\pi }{2 (b+c+\pi )},\frac{\cos c-\cos b}{b+c+\pi }\Big), \\
p_3&=\frac{\int_{\pi+c}^{2\pi -c} (\cos \theta ,\sin \theta )\, d\theta }{\int_{\pi+c}^{2\pi -c}   d\theta }=\Big(0,-\frac{2 \cos c}{\pi -2 c}\Big)\\
d_1&=(-\cos b, \sin b), \te{ and } d_2=(-\cos c, -\sin c).
\end{align*}
Thus, solving the canonical equations $\rho(d_1, p_1)-\rho(d_1, p_2)=0$, and $\rho(d_2, p_2)-\rho(d_2, p_3)=0$, we have $b=0.426473$, and $c=0.837847$. Hence, putting the values of $b$, and $c$, we have, $p_1=(-0.512388, 0.795606)$, $p_2=(-0.619091, -0.0547824)$, $p_3=(0, -0.912839)$, and so, due to symmetry, $p_4=(0.619091, -0.0547824)$, and $p_5=(0.512388, 0.795606)$. The corresponding distortion error is given by
\begin{align*}
V(P; \ga)&=2\Big(\frac 1{4\pi}\int_{\frac \pi 2}^{\pi -b} \rho((\cos \theta ,\sin \theta ), p_1)\, d\theta \Big)+\frac{1}{4} \int_{-1}^{0} \rho((x,0), p_2) \, dx\\
&\qquad  +\frac1{4\pi} \int_{\pi -b}^{\pi+c} \rho((\cos \theta,\sin \theta), p_2) \, d\theta \Big)+\frac 1{4\pi}\int_{\pi+c}^{2\pi -c} \rho((\cos \theta ,\sin \theta ), p_3)\, d\theta=0.1355.
\end{align*}
Comparing Case~1 and Case~2, we see that if $\ga$ contains two points from $L_1$, the Voronoi regions of which do not contain any point from $L_2$, then the distortion error is larger than the distortion error obtained in Case~2. Similarly, we can show that if $\ga$ contains more than two points from $L_1$, the Voronoi regions of which do not contain any point from $L_2$, then the distortion error is larger than the distortion error obtained in Case~2. Comparing  Case~2 and Case~3, we see that Case~3 can not happen as the distortion error is larger in Case~3. Considering all the above cases, we see that the distortion error in Case~2 is the smallest. Hence, the points in $\ga$ obtained in Case~2 form an optimal set of five-means, and the corresponding quantization error is given by $V_5=0.119779$.
Thus, the proof of the proposition is complete.
 \end{proof}

\begin{figure}
\begin{tikzpicture}[line cap=round,line join=round,>=triangle 45,x=1.0cm,y=1.0cm]
\clip(-2.1431409553721457,-2.454073612323069) rectangle (2.119731706345336,2.1580023224116287);
\draw(0.,0.) circle (2.cm);
\draw (-1.9739043510998817,0.)-- (2.0260956489001183,0.);
\draw [dash pattern=on 1pt off 1pt,color=ffqqqq] (0.,-2.)-- (0.,2.);
\draw [dash pattern=on 1pt off 1pt,color=ffqqqq] (1.58436,1.22058)-- (1.13563,0.);
\draw [dash pattern=on 1pt off 1pt,color=ffqqqq] (-1.13563,0.)-- (0.,-2.);
\draw [dash pattern=on 1pt off 1pt,color=ffqqqq] (1.13563,0.)-- (0.,-2.);
\draw [dash pattern=on 1pt off 1pt,color=ffqqqq] (-1.5827931763841139,1.2226061347759907)-- (1.58436,1.22058);
\draw [dash pattern=on 1pt off 1pt,color=ffqqqq] (-1.5827931763841139,1.2226061347759907)-- (-1.13563,0.);
\draw (0.000336120716131975,2.109114050825449) node[anchor=north west] {$p_1$};
\draw (-1.5615267889439856,-0.070789453020717) node[anchor=north west] {$p_2$};
\draw (-0.6866029502439203,0.5206572340693282) node[anchor=north west] {$p_3$};
\draw (0.4462904239361712,0.5206572340693282) node[anchor=north west] {$p_4$};
\draw (1.4326051232802444,-0.10458640656871958) node[anchor=north west] {$p_5$};
\draw (-2.0708871850680196,1.6697536547014158) node[anchor=north west] {$d_1$};
\draw (-1.287354207011962,0.4868602805213256) node[anchor=north west] {$d_2$};
\begin{scriptsize}
\draw [fill=ffqqqq] (0.,1.73273) circle (1.5pt);
\draw [fill=ffqqqq] (-1.49214,-0.441944) circle (1.5pt);
\draw [fill=ffqqqq] (-0.567815,0.) circle (1.5pt);
\draw [fill=ffqqqq] (1.49214,-0.441944) circle (1.5pt);
\draw [fill=ffqqqq] (0.567815,0.) circle (1.5pt);
\draw [fill=ffqqqq] (-1.58436,1.22058) circle (0.5pt);
\draw [fill=ffqqqq] (-1.13563,0.) circle (0.5pt);
\draw [fill=ffqqqq] (1.58436,1.22058) circle (0.5pt);
\draw [fill=ffqqqq] (1.13563,0.) circle (0.5pt);
\draw [fill=ffqqqq] (0.,-2.) circle (0.5pt);
\draw[] (0.00096496167399,-2.330438200585347174) node { $(i)$ };
\end{scriptsize}
\end{tikzpicture}
\begin{tikzpicture}[line cap=round,line join=round,>=triangle 45,x=1.0cm,y=1.0cm]
\clip(-2.1431409553721457,-2.454073612323069) rectangle (2.119731706345336,2.1580023224116287);
\draw(0.,0.) circle (2.cm);
\draw (-1.9739043510998817,0.)-- (2.0260956489001183,0.);
\draw [dash pattern=on 1pt off 1pt,color=ffqqqq] (0.,-2.)-- (0.,2.);
\draw (0.00000004587222234,2.11986762695436) node[anchor=north west] {$p_1$};
\draw (-1.6305406295874518,0.46074445277968734) node[anchor=north west] {$p_2$};
\draw (0.00000004587222234,0.43001994955423045) node[anchor=north west] {$p_3$};
\draw (0.00000004587222234,-1.4902615020368257) node[anchor=north west] {$p_4$};
\draw (1.5033586994091548,0.46074445277968734) node[anchor=north west] {$p_5$};
\draw (-1.9012651328129089,1.8433470979252478) node[anchor=north west] {$d_1$};
\draw (-0.8970657973383002,0.46074445277968734) node[anchor=north west] {$d_2$};
\draw [dash pattern=on 1pt off 1pt,color=ffqqqq] (1.39516,1.43302)-- (0.788308,0.);
\draw [dash pattern=on 1pt off 1pt,color=ffqqqq] (0.788308,0.)-- (1.39516,-1.43302);
\draw [dash pattern=on 1pt off 1pt,color=ffqqqq] (1.39516,-1.43302)-- (-1.39516,-1.43302);
\draw [dash pattern=on 1pt off 1pt,color=ffqqqq] (-1.39516,-1.43302)-- (-0.788308,0.);
\draw [dash pattern=on 1pt off 1pt,color=ffqqqq] (1.39516,1.43302)-- (-1.39516,1.43302);
\draw [dash pattern=on 1pt off 1pt,color=ffqqqq] (-1.39516,1.43302)-- (-0.788308,0.);
\begin{scriptsize}
\draw [fill=ffqqqq] (0.,1.80717) circle (1.5pt);
\draw [fill=ffqqqq] (-1.57662,0.) circle (1.5pt);
\draw [fill=ffqqqq] (0.,0.) circle (1.5pt);
\draw [fill=ffqqqq] (0.,-1.80717) circle (1.5pt);
\draw [fill=ffqqqq] (1.57662,0.) circle (1.5pt);
\draw [fill=ffxfqq] (-1.39516,1.43302) circle (0.5pt);
\draw [fill=ffxfqq] (-0.788308,0.) circle (0.5pt);
\draw [fill=ffxfqq] (-1.39516,-1.43302) circle (0.5pt);
\draw [fill=ffxfqq] (-0.788308,0.) circle (0.5pt);
\draw [fill=ffxfqq] (1.39516,1.43302) circle (0.5pt);
\draw [fill=ffxfqq] (1.39516,-1.43302) circle (0.5pt);
\draw [fill=ffqqqq] (0.788308,0.) circle (0.5pt);
\draw[] (0.00096496167399,-2.330438200585347174) node { $(ii)$ };
\end{scriptsize}
\end{tikzpicture}
\begin{tikzpicture}[line cap=round,line join=round,>=triangle 45,x=1.0cm,y=1.0cm]
\clip(-2.1431409553721457,-2.454073612323069) rectangle (2.119731706345336,2.1580023224116287);
\draw(0.,0.) circle (2.cm);
\draw (-1.9739043510998817,0.)-- (2.0260956489001183,0.);
\draw [dash pattern=on 1pt off 1pt,color=ffqqqq] (0.,-2.)-- (0.,2.);
\draw (-1.0477840221108924,1.9459669386982732) node[anchor=north west] {$p_1$};
\draw (-1.2635800479145478,0.17930800323450138) node[anchor=north west] {$p_2$};
\draw (0.00000000716457705,-1.4490906677147144) node[anchor=north west] {$p_3$};
\draw (1.2097424617347348,0.16394575162177294) node[anchor=north west] {$p_4$};
\draw (1.0253954423819933,1.7923444225709886) node[anchor=north west] {$p_5$};
\draw (-2.3099528930655274,1.253216609674579) node[anchor=north west] {$d_1$};
\draw (-1.7583608414582165,-1.2186568935237876) node[anchor=north west] {$d_2$};
\draw [dash pattern=on 1pt off 1pt,color=ffqqqq] (0.,0.)-- (-1.33813,-1.48641);
\draw [dash pattern=on 1pt off 1pt,color=ffqqqq] (-1.33813,-1.48641)-- (1.345674764197645,-1.4795808288165984);
\draw [dash pattern=on 1pt off 1pt,color=ffqqqq] (1.345674764197645,-1.4795808288165984)-- (0.,0.);
\draw [dash pattern=on 1pt off 1pt,color=ffqqqq] (0.,0.)-- (1.82086,0.827324);
\draw [dash pattern=on 1pt off 1pt,color=ffqqqq] (1.82086,0.827324)-- (0.,2.);
\draw [dash pattern=on 1pt off 1pt,color=ffqqqq] (0.,2.)-- (-1.82086,0.827324);
\draw [dash pattern=on 1pt off 1pt,color=ffqqqq] (-1.82086,0.827324)-- (0.,0.);
\begin{scriptsize}
\draw [fill=ffqqqq] (-1.02478,1.59121) circle (1.5pt);
\draw [fill=ffqqqq] (-1.23818,-0.109565) circle (1.5pt);
\draw [fill=ffqqqq] (0.,-1.82568) circle (1.5pt);
\draw [fill=ffqqqq] (1.02478,1.59121) circle (1.5pt);
\draw [fill=ffqqqq] (1.23818,-0.109565) circle (1.5pt);
\draw [fill=ffqqqq] (-1.82086,0.827324) circle (0.5pt);
\draw [fill=ffqqqq] (-1.33813,-1.48641) circle (0.5pt);
\draw [fill=ffqqqq] (1.82086,0.827324) circle (0.5pt);
\draw [fill=ffqqqq] (1.33813,-1.48641) circle (0.5pt);
\draw [fill=ffqqqq] (0.,2.) circle (0.5pt);
\draw [fill=ffqqqq] (0.,0.) circle (0.5pt);
\draw[] (0.00096496167399,-2.330438200585347174) node { $(iii)$ };
\end{scriptsize}
\end{tikzpicture}
\caption{ } \label{Fig11}
\end{figure}

\begin{prop} \label{prop6}
An optimal set of six-means is
 \begin{align*}
\set{(-0.497577,0.809422), & (-0.786245,-0.0706781),(0, 0), (0,-0.913921), (0.786245,-0.0706781), \\
& (0.497577,0.809422)}
 \end{align*} and the corresponding quantization error for six-means is given by $V_6=0.093342$.
\end{prop}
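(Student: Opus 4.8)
The plan is to mirror the arguments of Propositions~\ref{prop4} and \ref{prop5}. Let $\ga := \set{p_1, p_2, \ldots, p_6}$ be an optimal set of six-means. Since $P = \frac{1}{2}P_1 + \frac{1}{2}P_2$ and the path $L$ are invariant under reflection in the $x_2$-axis, I would first reduce to configurations that are symmetric about that axis, so that the six points split into points lying on the axis and mirror pairs $\set{p, \bar p}$. I would then organize the argument into cases according to the number of points of $\ga$ that lie on the diameter $L_1$ and whose Voronoi regions contain no point of $L_2$.

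For the case producing the asserted optimal set, exactly one such point occurs, namely $p_3 = (0,0)$, whose Voronoi region is the central diameter segment $-a \leq x_1 \leq a$ and contains no arc of $L_2$; a second axis point $p_4$ sits near the bottom of the circle, and the remaining four points form two mirror pairs $\set{p_1, p_6}$ and $\set{p_2, p_5}$. I would introduce three parameters: the diameter cut $x_1 = -a$ separating $p_3$ from the left point $p_2$, and the two arc cuts $\gq = \pi - b$ and $\gq = \pi + c$ at which the boundaries of the Voronoi regions of $p_1, p_2$ and of $p_2, p_4$ meet $L_2$. By Proposition~\ref{prop0}$(iii)$ the centers are the conditional expectations
\begin{align*}
p_1 &= \frac{\int_{\pi/2}^{\pi-b}(\cos\gq, \sin\gq)\, d\gq}{\int_{\pi/2}^{\pi-b}\, d\gq}, \\
p_2 &= \frac{\frac{1}{4}\int_{-1}^{-a}(x,0)\, dx + \frac{1}{4\pi}\int_{\pi-b}^{\pi+c}(\cos\gq, \sin\gq)\, d\gq}{\frac{1}{4}\int_{-1}^{-a}\, dx + \frac{1}{4\pi}\int_{\pi-b}^{\pi+c}\, d\gq},
\end{align*}
together with $p_3 = (0,0)$ and $p_4 = \Big(0, -\frac{2\cos c}{\pi - 2c}\Big)$, while the relevant boundary points are $d_1 = (-\cos b, \sin b)$, $d_2 = (-\cos c, -\sin c)$, and $d_3 = (-a, 0)$. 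The three canonical equations
\[ \rho(d_1, p_1) - \rho(d_1, p_2) = 0, \quad \rho(d_2, p_2) - \rho(d_2, p_4) = 0, \quad \rho(d_3, p_2) - \rho(d_3, p_3) = 0 \]
form a system in $(a, b, c)$; solving it numerically should return values reproducing the six listed points (with $p_5, p_6$ obtained by reflection), and summing the corresponding arc and segment integrals should give $V(P; \ga) = 0.093342$.

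It then remains to eliminate the competing symmetric layouts: those with no interior diameter point (three mirror pairs spread over the circle and the sides), those with two interior diameter points (a symmetric pair $(\pm \frac{a}{2}, 0)$ on $L_1$, as in Case~1 of Proposition~\ref{prop5}), and the variants in which the bottom axis point is replaced by a mirror pair low on the arc. For each I would write down the analogous conditional expectations and canonical equations, solve, compute the distortion, and verify that it exceeds $0.093342$, exactly as in the concluding comparisons of Propositions~\ref{prop4} and \ref{prop5}.

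The main obstacle is the exhaustiveness of the case analysis. With six points the number of admissible symmetric arrangements is larger than in the four- and five-means proofs, and the optimizer here is symmetric only about the $x_2$-axis (the extra bottom point $p_4$ destroys the top--bottom symmetry enjoyed by the five-means optimizer), so one must take care to list all genuinely distinct layouts and to justify the reduction to axis-symmetric configurations. Each individual case collapses to a small transcendental system whose numerical solution is routine; the difficulty lies in organizing the cases so the enumeration is provably complete and in certifying that the claimed configuration is the global minimizer.
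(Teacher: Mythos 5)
Your proposal is correct and takes essentially the same route as the paper: the paper's own proof of this proposition is a two-line reduction to the three cases pictured in Figure~\ref{Fig2} (classified, as in Proposition~\ref{prop5}, by how many points of $\ga$ lie on $L_1$ with Voronoi regions disjoint from $L_2$), followed by computing and comparing the distortion errors, with the winning configuration set up exactly via the conditional expectations and canonical equations in $(a,b,c)$ that you write down. Your parametrization of Case~(ii) and the three canonical equations match the paper's method, and your closing caveat about exhaustiveness applies equally to the paper's own (terser) argument.
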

\begin{proof} Let $\ga:=\set{p_1, p_2, p_3, p_4, p_5, p_6}$ be an optimal set of six-means. As in Proposition~\ref{prop5}, here also we consider three different cases as shown in Figure~\ref{Fig2}. In each case, we calculate the distortion errors. Then, comparing the distortion errors, we see that the points given by the proposition give the smallest distortion error for six points, and hence they form an optimal set of six-means, which is shown by Figure~\ref{Fig2}~$(ii)$.  Thus, the proof of the proposition is deduced.
\begin{figure}
\begin{tikzpicture}[line cap=round,line join=round,>=triangle 45,x=1.0cm,y=1.0cm]
\clip(-2.1431409553721457,-2.454073612323069) rectangle (2.119731706345336,2.1580023224116287);
\draw(0.,0.) circle (2.cm);
\draw (-1.9739043510998817,0.)-- (2.0260956489001183,0.);
\draw [dash pattern=on 1pt off 1pt,color=ffqqqq] (0.,-2.)-- (0.,2.);
\draw [dash pattern=on 1pt off 1pt,color=ffqqqq] (-1.40311,1.42523)-- (-1.12543,0.);
\draw [dash pattern=on 1pt off 1pt,color=ffqqqq] (-1.12543,0.)-- (-1.40311,-1.42523);
\draw [dash pattern=on 1pt off 1pt,color=ffqqqq] (-1.40311,-1.42523)-- (1.3907039633856693,-1.437338681808637);
\draw [dash pattern=on 1pt off 1pt,color=ffqqqq] (1.40311,1.42523)-- (1.12543,0.);
\draw [dash pattern=on 1pt off 1pt,color=ffqqqq] (1.12543,0.)-- (1.3907039633856693,-1.437338681808637);
\draw [dash pattern=on 1pt off 1pt,color=ffqqqq] (1.40311,1.42523)-- (-1.40311,1.42523);
\draw (0.00003155103813541,2.092215574051448) node[anchor=north west] {$p_1$};
\draw (-1.9519024173280071,0.5375557108433294) node[anchor=north west] {$p_2$};
\draw (-0.6366029502439208,0.5375557108433294) node[anchor=north west] {$p_3$};
\draw (-1.8012069870060031,1.9401292830854364) node[anchor=north west] {$d_1$};
\draw (-1.3604557302379612,0.5375557108433294) node[anchor=north west] {$d_2$};
\draw (0.42179804006616415,0.5375557108433294) node[anchor=north west] {$p_4$};
\draw (0.0093361207161315,-1.4057691181668188) node[anchor=north west] {$p_5$};
\draw (1.5101990303762492,0.5375557108433294) node[anchor=north west] {$p_6$};
\begin{scriptsize}
\draw [fill=ffqqqq] (0.,1.80446) circle (1.5pt);
\draw [fill=ffqqqq] (-1.68815,0.) circle (1.5pt);
\draw [fill=ffqqqq] (-0.562717,0.) circle (1.5pt);
\draw [fill=ffqqqq] (0.,-1.80446) circle (1.5pt);
\draw [fill=ffqqqq] (1.68815,0.) circle (1.5pt);
\draw [fill=ffqqqq] (0.562717,0.) circle (1.5pt);
\draw [fill=ffqqqq] (-1.40311,1.42523) circle (0.5pt);
\draw [fill=ffqqqq] (-1.12543,0.) circle (0.5pt);
\draw [fill=ffqqqq] (1.12543,0.) circle (0.5pt);
\draw [fill=ffqqqq] (-1.40311,-1.42523) circle (0.5pt);
\draw [fill=ffqqqq] (1.40311,1.42523) circle (0.5pt);
\draw [fill=ffqqqq] (1.40311,-1.42523) circle (0.5pt);
\draw[] (0.00096496167399,-2.330438200585347174) node { $(i)$ };
\end{scriptsize}
\end{tikzpicture}
\begin{tikzpicture}[line cap=round,line join=round,>=triangle 45,x=1.0cm,y=1.0cm]
\clip(-2.1431409553721457,-2.454073612323069) rectangle (2.119731706345336,2.1580023224116287);
\draw(0.,0.) circle (2.cm);
\draw (-1.9739043510998817,0.)-- (2.0260956489001183,0.);
\draw [dash pattern=on 1pt off 1pt,color=ffqqqq] (0.,-2.)-- (0.,2.);
\draw [dash pattern=on 1pt off 1pt,color=ffqqqq] (-1.78455,0.902981)-- (-0.792598,0.);
\draw [dash pattern=on 1pt off 1pt,color=ffqqqq] (-0.792598,0.)-- (-1.3195429979402469,-1.5029325588950642);
\draw [dash pattern=on 1pt off 1pt,color=ffqqqq] (1.3088544602241603,-1.512249979980599)-- (0.792598,0.);
\draw [dash pattern=on 1pt off 1pt,color=ffqqqq] (0.792598,0.)-- (1.78455,0.902981);
\draw [dash pattern=on 1pt off 1pt,color=ffqqqq] (-1.78455,0.902981)-- (1.78455,0.902981);
\draw [dash pattern=on 1pt off 1pt,color=ffqqqq] (-1.3195429979402469,-1.5029325588950642)-- (1.33114,-1.49267);
\draw (-1.1076740089499456,1.7880429921194247) node[anchor=north west] {$p_1$};
\draw (-1.682917649587994,0.00019531471929589) node[anchor=north west] {$p_2$};
\draw (0.00043764394213019,0.4037587572953268) node[anchor=north west] {$p_3$};
\draw (0.000039167168128883,-1.4888706413928176) node[anchor=north west] {$p_4$};
\draw (0.9011584361901981,1.7880429921194247) node[anchor=north west] {$p_5$};
\draw (1.4601990303762492,0.00019531471929589) node[anchor=north west] {$p_6$};
\draw (-2.2067704295820344,1.4486825959953912) node[anchor=north west] {$d_1$};
\draw (-0.9469785786279416,0.5375557108433294) node[anchor=north west] {$d_2$};
\draw (-1.7491206960399913,-1.255073687844815) node[anchor=north west] {$d_3$};
\draw [dash pattern=on 1pt off 1pt,color=ffqqqq] (0.,2.)-- (-1.78455,0.902981);
\draw [dash pattern=on 1pt off 1pt,color=ffqqqq] (0.,2.)-- (1.7845512681823221,0.902982154435438);
\begin{scriptsize}
\draw [fill=ffqqqq] (-0.995154,1.61884) circle (1.5pt);
\draw [fill=ffqqqq] (-1.57249,-0.141356) circle (1.5pt);
\draw [fill=ffqqqq] (0.,0.) circle (1.5pt);
\draw [fill=ffqqqq] (0.,-1.82784) circle (1.5pt);
\draw [fill=ffqqqq] (0.995154,1.61884) circle (1.5pt);
\draw [fill=ffqqqq] (1.57249,-0.141356) circle (1.5pt);
\draw [fill=ffqqqq] (-1.78455,0.902981) circle (0.5pt);
\draw [fill=ffqqqq] (-0.792598,0.) circle (0.5pt);
\draw [fill=ffqqqq] (-1.33114,-1.49267) circle (0.5pt);
\draw [fill=ffqqqq] (1.78455,0.902981) circle (0.5pt);
\draw [fill=ffqqqq] (0.792598,0.) circle (0.5pt);
\draw [fill=ffqqqq] (1.33114,-1.49267) circle (0.5pt);
\draw[] (0.00096496167399,-2.330438200585347174) node { $(ii)$ };
\end{scriptsize}
\end{tikzpicture}
\begin{tikzpicture}[line cap=round,line join=round,>=triangle 45,x=1.0cm,y=1.0cm]
\clip(-2.1431409553721457,-2.454073612323069) rectangle (2.119731706345336,2.1580023224116287);
\draw(0.,0.) circle (2.cm);
\draw (-1.9739043510998817,0.)-- (2.0260956489001183,0.);
\draw [dash pattern=on 1pt off 1pt,color=ffqqqq] (0.,-2.)-- (0.,2.);
\draw (-1.042166392819952,1.8218399456674275) node[anchor=north west] {$p_1$};
\draw (-1.2618465908819692,0.5375557108433294) node[anchor=north west] {$p_2$};
\draw (-1.1183694392719496,-1.118495013008797) node[anchor=north west] {$p_3$};
\draw (0.8335645290941929,-1.118495013008797) node[anchor=north west] {$p_4$};
\draw (1.2053310181222217,0.5037587572953268) node[anchor=north west] {$p_5$};
\draw (1.002549296834206,1.8218399456674275) node[anchor=north west] {$p_6$};
\draw (-2.2667704295820344,1.3886825959953912) node[anchor=north west] {$d_1$};
\draw [dash pattern=on 1pt off 1pt,color=ffqqqq] (-1.7819682440770641,0.9080689275054534)-- (0.,0.);
\draw [dash pattern=on 1pt off 1pt,color=ffqqqq] (0.,0.)-- (-1.78606,-0.899997);
\draw [dash pattern=on 1pt off 1pt,color=ffqqqq] (0.,0.)-- (1.78606,0.899997);
\draw [dash pattern=on 1pt off 1pt,color=ffqqqq] (0.,0.)-- (1.78606,-0.899997);
\draw [dash pattern=on 1pt off 1pt,color=ffqqqq] (-1.78606,0.899997)-- (0.,2.);
\draw [dash pattern=on 1pt off 1pt,color=ffqqqq] (-1.78606,-0.899997)-- (0.,-2.);
\draw [dash pattern=on 1pt off 1pt,color=ffqqqq] (0.,-2.)-- (1.78606,-0.899997);
\draw [dash pattern=on 1pt off 1pt,color=ffqqqq] (0.,2.)-- (1.78606,0.899997);
\begin{scriptsize}
\draw [fill=ffqqqq] (-0.99635,1.61776) circle (1.5pt);
\draw [fill=ffqqqq] (-1.21262,0.) circle (1.5pt);
\draw [fill=ffqqqq] (-0.99635,-1.61776) circle (1.5pt);
\draw [fill=ffqqqq] (1.21262,0.) circle (1.5pt);
\draw [fill=ffqqqq] (0.99635,-1.61776) circle (1.5pt);
\draw [fill=ffqqqq] (0.99635,1.61776) circle (1.5pt);
\draw [fill=ffqqqq] (-1.78606,0.899997) circle (0.5pt);
\draw [fill=ffqqqq] (-1.78606,-0.899997) circle (0.5pt);
\draw [fill=ffqqqq] (1.78606,0.899997) circle (0.5pt);
\draw [fill=ffqqqq] (1.78606,-0.899997) circle (0.5pt);
\draw[] (0.00096496167399,-2.330438200585347174) node { $(iii)$ };
\end{scriptsize}
\end{tikzpicture}
\caption{ } \label{Fig2}
\end{figure}
\end{proof}

Proceeding in the similar way as Proposition~\ref{prop5} and Proposition~\ref{prop6}, we can deduce that the following proposition is also true.

 \begin{prop} \label{prop7} Let $\ga_n$ be an optimal set of $n$-means, and let $V_n$ be the corresponding quantization error. Then,
 \begin{align*}
\ga_7=\set{&(-0.476891, 0.827476), (-0.788772, 0), (0, 0), (-0.476891, -0.827476), \\
& (0.476891, -0.827476),  (0.788772, 0), (0.476891, 0.827476)},
 \end{align*} with $V_7=0.070674$, see Figure~\ref{Fig3}~$(i)$;
 \begin{align*}
\ga_8=\set{& (-0.475258,0.828843), (-0.860649,0),(-0.286883,0), (-0.475258, -0.828843), \\
&(0.475258, -0.828843), (0.860649,0),(0.286883,0), (0.475258, 0.828843)},
 \end{align*} with $V_8=0.0577852$, see Figure~\ref{Fig3}~$(ii)$;
  \begin{align*}
\ga_9=\set{& (-0.463928,0.838108),(-0.857223,0.0396484),(-0.286659,0),(-0.704114,-0.671446),\\
&   (0,-0.972943),(0.704114,-0.671446),(0.286659,0), (0.857223,0.0396484),\\
&   (0.463928,0.838108)},
 \end{align*}  with $V_9=0.04803$, see Figure~\ref{Fig3}~$(iii)$;
 \begin{align*}
\ga_{10}=\set{& (0,0.974386),(-0.690161,0.687826),(-0.854308,0),(-0.284769,0),\\
&  (-0.690161, -0.687826),(0,-0.974386),(0.690161, -0.687826), (0.854308,0), \\
& (0.284769,0),  (0.690161,0.687826)},
 \end{align*} with $V_{10}=0.039046$,  see Figure~\ref{Fig3}~$(iv)$.
 \end{prop}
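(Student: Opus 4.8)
The plan is to imitate, for each $n\in\set{7,8,9,10}$, the case-analysis scheme already carried out in Proposition~\ref{prop5} and Proposition~\ref{prop6}. Let $\ga_n=\set{p_1,\ldots,p_n}$ be an optimal set of $n$-means. Since $P$ weights $P_1$ and $P_2$ equally and the path $L$ is symmetric about the $x_2$-axis, I would first reduce to configurations symmetric about the $x_2$-axis, so that each $p_i$ either lies on the $x_2$-axis or belongs to a mirror pair. I would then stratify the analysis by the number $k$ of points of $\ga_n$ lying on $L_1$ whose Voronoi region meets no point of $L_2$ (the \emph{pure-diameter} points), exactly as the cases $k=2,1,0$ were handled for $n=5$. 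For a fixed $k$, the remaining points split into maximal runs covering either a sub-arc of $L_2$ or a sub-segment of $L_1$, and within each run the points must be locally optimal: Theorem~\ref{th31} forces the diameter runs to be the equally spaced configuration, Theorem~\ref{th32} forces the pure-arc runs to be equally spaced in angle, and the interface points (absorbing a terminal segment of $L_1$ together with an adjacent arc of $L_2$) are given by the conditional-expectation integrals exhibited in Proposition~\ref{prop4}. This reduces the unknowns to the cut-points between adjacent Voronoi regions.

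With the configuration parametrized by the angular cut-points $\gb,\gc,\ldots$ on $L_2$ and the abscissae $a,\ldots$ of the cut-points on $L_1$, Proposition~\ref{prop0}$(iii)$ makes each $p_i$ an explicit function of these parameters, and each boundary point $d_\ell$ between adjacent regions satisfies a canonical equation $\rho(d_\ell,p_i)-\rho(d_\ell,p_{i+1})=0$. Solving this (small, nonlinear) system numerically pins down the parameters, hence the $p_i$, and substituting into $V(P;\ga)$ gives the case's distortion error; the smallest error across all cases is then claimed to reproduce $\ga_7,\ga_8,\ga_9,\ga_{10}$ with the stated $V_n$. The subtlety that must not be overlooked is the symmetry class: for $n=7,8,10$ the optimum is symmetric about both coordinate axes (so one may further halve the unknowns), whereas for $n=9$ it is symmetric about the $x_2$-axis only — the circle is cut into seven arcs, with a single centroid $(0,-0.972943)$ at the bottom and two distinct centroids across the top — so the horizontal-symmetry reduction must not be imposed a priori.

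The main obstacle is twofold. First, completeness: as $n$ grows, the number of admissible configurations (distributions of points among pure arcs, interface points, and pure-diameter points, together with the choice of symmetry class) increases, and the argument is finished only after every such case is shown to give distortion no smaller than the claimed minimum; this is precisely the work that the phrases ``similarly, we can show'' in Proposition~\ref{prop5} compress, and it must be spelled out in each case here. Second, the canonical equations are transcendental in the angular parameters — they mix polynomial terms in $a$ with $\sin\gb$, $\cos\gb$ and the arc-length factors $\pi-2\gb$ — so each case reduces to a numerical root-finding problem, after which one must verify via Proposition~\ref{prop0} that the computed centroids are genuinely self-consistent, $p_i=E\!\left(X:X\in M(p_i\mid\ga_n)\right)$, and that the induced partition really is the Voronoi partition of those points. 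Securing optimality then rests on the computed inter-case gaps being decisive against the numerical error.
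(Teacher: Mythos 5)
Your proposal is correct and follows essentially the same route as the paper, which proves this proposition only by the remark that one proceeds ``in the similar way as Proposition~\ref{prop5} and Proposition~\ref{prop6}'' — i.e., exactly the case analysis over the number of pure-diameter points, symmetry reduction, canonical equations at the Voronoi boundaries, numerical solution, and comparison of distortion errors that you describe. Your explicit caveats (the asymmetric configuration for $n=9$ and the need to exhaust all admissible configurations) in fact make the argument more careful than the paper's one-line justification.
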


\begin{figure}
\begin{tikzpicture}[line cap=round,line join=round,>=triangle 45,x=1.0cm,y=1.0cm]
\clip(-2.1431409553721457,-2.454073612323069) rectangle (2.119731706345336,2.1580023224116287);
\draw(0.,0.) circle (2.cm);
\draw (-1.9739043510998817,0.)-- (2.0260956489001183,0.);
\draw [dash pattern=on 1pt off 1pt,color=ffqqqq] (0.,-2.)-- (0.,2.);
\draw (-0.9301955940961008,2.1023137975970934) node[anchor=north west] {$p_1$};
\draw (-1.6037794766555944,0.5133466900208501) node[anchor=north west] {$p_2$};
\draw (0.00000157337005516,0.3888187291676852) node[anchor=north west] {$p_3$};
\draw (-0.9129242124920112,-1.1792487071799311) node[anchor=north west] {$p_4$};
\draw (0.9351136191455742,-1.1792487071799311) node[anchor=north west] {$p_5$};
\draw (1.5050692120805305,0.5133466900208501) node[anchor=north west] {$p_6$};
\draw (0.9523850007496638,1.912328599952108) node[anchor=north west] {$p_7$};
\draw [dash pattern=on 1pt off 1pt,color=ffqqqq] (0.,2.)-- (1.7305,1.00268);
\draw [dash pattern=on 1pt off 1pt,color=ffqqqq] (1.7305,1.00268)-- (0.788772,0.);
\draw [dash pattern=on 1pt off 1pt,color=ffqqqq] (0.788772,0.)-- (1.7305,-1.00268);
\draw [dash pattern=on 1pt off 1pt,color=ffqqqq] (0.,-2.)-- (1.7305,-1.00268);
\draw [dash pattern=on 1pt off 1pt,color=ffqqqq] (0.,-2.)-- (-1.7305,-1.00268);
\draw [dash pattern=on 1pt off 1pt,color=ffqqqq] (-1.7305,-1.00268)-- (-0.7747531596592945,0.);
\draw [dash pattern=on 1pt off 1pt,color=ffqqqq] (-0.7747531596592945,0.)-- (-1.7305,1.00268);
\draw [dash pattern=on 1pt off 1pt,color=ffqqqq] (-1.7305,1.00268)-- (0.,2.);
\begin{scriptsize}
\draw [fill=ffqqqq] (0.,-2.) circle (0.5pt);
\draw [fill=ffqqqq] (-0.953782,1.65495) circle (1.5pt);
\draw [fill=ffqqqq] (-1.57754,0.) circle (1.5pt);
\draw [fill=ffqqqq] (0.,0.) circle (1.5pt);
\draw [fill=ffqqqq] (0.953782,1.65495) circle (1.5pt);
\draw [fill=ffqqqq] (1.57754,0.) circle (1.5pt);
\draw [fill=ffqqqq] (-0.953782,-1.65495) circle (1.5pt);
\draw [fill=ffqqqq] (0.953782,-1.65495) circle (1.5pt);
\draw [fill=ffqqqq] (0.,2.) circle (0.5pt);
\draw [fill=ffqqqq] (-1.7305,1.00268) circle (0.5pt);
\draw [fill=ffqqqq] (-0.788772,0.) circle (0.5pt);
\draw [fill=ffqqqq] (-1.7305,-1.00268) circle (0.5pt);
\draw [fill=ffqqqq] (0.788772,0.) circle (0.5pt);
\draw [fill=ffqqqq] (1.7305,1.00268) circle (0.5pt);
\draw [fill=ffqqqq] (-1.7305,-1.00268) circle (0.5pt);
\draw [fill=ffqqqq] (1.7305,-1.00268) circle (0.5pt);
\draw[] (0.00096496167399,-2.330438200585347174) node { $(i)$ };
\end{scriptsize}
\end{tikzpicture}
\begin{tikzpicture}[line cap=round,line join=round,>=triangle 45,x=1.0cm,y=1.0cm]
\clip(-2.1431409553721457,-2.454073612323069) rectangle (2.119731706345336,2.1580023224116287);
\draw(0.,0.) circle (2.cm);
\draw (-1.9739043510998817,0.)-- (2.0260956489001183,0.);
\draw [dash pattern=on 1pt off 1pt,color=ffqqqq] (0.,-2.)-- (0.,2.);
\draw [dash pattern=on 1pt off 1pt,color=ffqqqq] (0.,2.)-- (-1.72608,1.01027);
\draw [dash pattern=on 1pt off 1pt,color=ffqqqq] (-1.72608,1.01027)-- (-1.14753,0.);
\draw [dash pattern=on 1pt off 1pt,color=ffqqqq] (-1.14753,0.)-- (-1.72608,-1.01027);
\draw [dash pattern=on 1pt off 1pt,color=ffqqqq] (-1.72608,-1.01027)-- (0.,-2.);
\draw [dash pattern=on 1pt off 1pt,color=ffqqqq] (0.,-2.)-- (1.72608,-1.01027);
\draw [dash pattern=on 1pt off 1pt,color=ffqqqq] (1.72608,-1.01027)-- (1.14753,0.);
\draw [dash pattern=on 1pt off 1pt,color=ffqqqq] (1.14753,0.)-- (1.72608,1.01027);
\draw [dash pattern=on 1pt off 1pt,color=ffqqqq] (1.72608,1.01027)-- (0.,2.);
\begin{scriptsize}
\draw [fill=ffqqqq] (-0.950516,1.65769) circle (1.5pt);
\draw [fill=ffqqqq] (-1.7213,0.) circle (1.5pt);
\draw [fill=ffqqqq] (-0.573766,0.) circle (1.5pt);
\draw [fill=ffqqqq] (0.950516,1.65769) circle (1.5pt);
\draw [fill=ffqqqq] (1.7213,0.) circle (1.5pt);
\draw [fill=ffqqqq] (0.573766,0.) circle (1.5pt);
\draw [fill=ffqqqq] (-0.950516,-1.65769) circle (1.5pt);
\draw [fill=ffqqqq] (0.950516,-1.65769) circle (1.5pt);
\draw [fill=ffqqqq] (-1.72608,1.01027) circle (0.5pt);
\draw [fill=ffqqqq] (-1.14753,0.) circle (0.5pt);
\draw [fill=ffqqqq] (1.72608,1.01027) circle (0.5pt);
\draw [fill=ffqqqq] (1.14753,0.) circle (0.5pt);
\draw [fill=ffqqqq] (-1.72608,-1.01027) circle (0.5pt);
\draw [fill=ffqqqq] (1.72608,1.01027) circle (0.5pt);
\draw [fill=ffqqqq] (1.72608,-1.01027) circle (0.5pt);
\draw [fill=qqwuqq] (0.,2.) circle (0.5pt);
\draw [fill=qqwuqq] (0.,-2.) circle (0.5pt);
\draw[] (0.00096496167399,-2.330438200585347174) node { $(ii)$ };
\end{scriptsize}
\end{tikzpicture}
\begin{tikzpicture}[line cap=round,line join=round,>=triangle 45,x=1.0cm,y=1.0cm]
\clip(-2.1431409553721457,-2.454073612323069) rectangle (2.119731706345336,2.1580023224116287);
\draw(0.,0.) circle (2.cm);
\draw (-1.9739043510998817,0.)-- (2.0260956489001183,0.);
\draw [dash pattern=on 1pt off 1pt,color=ffqqqq] (0.,-2.)-- (0.,2.);
\draw [line width=0.4pt,dash pattern=on 1pt off 1pt,color=ffqqqq] (0.,2.)-- (-1.69485,1.06183);
\draw [line width=0.4pt,dash pattern=on 1pt off 1pt,color=ffqqqq] (-1.69485,1.06183)-- (-1.14664,0.);
\draw [line width=0.4pt,dash pattern=on 1pt off 1pt,color=ffqqqq] (-1.14664,0.)-- (-1.87384,-0.699084);
\draw [line width=0.4pt,dash pattern=on 1pt off 1pt,color=ffqqqq] (-1.87384,-0.699084)-- (-0.787249,-1.83854);
\draw [line width=0.4pt,dash pattern=on 1pt off 1pt,color=ffqqqq] (-0.787249,-1.83854)-- (0.7815234120096467,-1.8409837469355346);
\draw [line width=0.4pt,dash pattern=on 1pt off 1pt,color=ffqqqq] (0.7815234120096467,-1.8409837469355346)-- (1.874920313417217,-0.6961851896840988);
\draw [line width=0.4pt,dash pattern=on 1pt off 1pt,color=ffqqqq] (1.874920313417217,-0.6961851896840988)-- (1.14664,0.);
\draw [line width=0.4pt,dash pattern=on 1pt off 1pt,color=ffqqqq] (1.14664,0.)-- (1.69485,1.06183);
\draw [line width=0.4pt,dash pattern=on 1pt off 1pt,color=ffqqqq] (1.69485,1.06183)-- (0.,2.);
\begin{scriptsize}
\draw [fill=ffqqqq] (0.,2.) circle (0.5pt);
\draw [fill=ffqqqq] (0.,0.) circle (0.5pt);
\draw [fill=ffqqqq] (-0.927856,1.67622) circle (1.5pt);
\draw [fill=ffqqqq] (-1.71445,0.0792968) circle (1.5pt);
\draw [fill=ffqqqq] (-0.573319,0.) circle (1.5pt);
\draw [fill=ffqqqq] (-1.40823,-1.34289) circle (1.5pt);
\draw [fill=ffqqqq] (0.,-1.94589) circle (1.5pt);
\draw [fill=ffqqqq] (0.927856,1.67622) circle (1.5pt);
\draw [fill=ffqqqq] (1.71445,0.0792968) circle (1.5pt);
\draw [fill=ffqqqq] (0.573319,0.) circle (1.5pt);
\draw [fill=ffqqqq] (1.40823,-1.34289) circle (1.5pt);
\draw [fill=ffqqqq] (-1.69485,1.06183) circle (0.5pt);
\draw [fill=ffqqqq] (-1.14664,0.) circle (0.5pt);
\draw [fill=ffqqqq] (-1.87384,-0.699084) circle (0.5pt);
\draw [fill=ffqqqq] (-0.787249,-1.83854) circle (0.5pt);
\draw [fill=ffqqqq] (1.69485,1.06183) circle (0.5pt);
\draw [fill=ffqqqq] (1.14664,0.) circle (0.5pt);
\draw [fill=ffqqqq] (1.87384,-0.699084) circle (0.5pt);
\draw [fill=ffqqqq] (0.787249,-1.83854) circle (0.5pt);
\draw[] (0.00096496167399,-2.330438200585347174) node { $(iii)$ };
\end{scriptsize}
\end{tikzpicture}
\begin{tikzpicture}[line cap=round,line join=round,>=triangle 45,x=1.0cm,y=1.0cm]
\clip(-2.1431409553721457,-2.454073612323069) rectangle (2.119731706345336,2.1580023224116287);
\draw(0.,0.) circle (2.cm);
\draw (-1.9739043510998817,0.)-- (2.0260956489001183,0.);
\draw [dash pattern=on 1pt off 1pt,color=ffqqqq] (0.,-2.)-- (0.,2.);
\draw [line width=0.4pt,dash pattern=on 1pt off 1pt,color=ffqqqq] (0.7546649797611029,1.8521557084441291)-- (-0.766932,1.84711);
\draw [line width=0.4pt,dash pattern=on 1pt off 1pt,color=ffqqqq] (-0.766932,1.84711)-- (-1.8497,0.760668);
\draw [line width=0.4pt,dash pattern=on 1pt off 1pt,color=ffqqqq] (-1.8497,0.760668)-- (-1.13908,0.);
\draw [line width=0.4pt,dash pattern=on 1pt off 1pt,color=ffqqqq] (-1.13908,0.)-- (-1.8497,-0.760668);
\draw [line width=0.4pt,dash pattern=on 1pt off 1pt,color=ffqqqq] (-1.8497,-0.760668)-- (-0.766932,-1.84711);
\draw [line width=0.4pt,dash pattern=on 1pt off 1pt,color=ffqqqq] (-0.766932,-1.84711)-- (0.6981350704040327,-1.8741951401793668);
\draw [line width=0.4pt,dash pattern=on 1pt off 1pt,color=ffqqqq] (0.6981350704040327,-1.8741951401793668)-- (1.8497,-0.760668);
\draw [line width=0.4pt,dash pattern=on 1pt off 1pt,color=ffqqqq] (1.8497,0.760668)-- (0.7546649797611029,1.8521557084441291);
\draw [line width=0.4pt,dash pattern=on 1pt off 1pt,color=ffqqqq] (1.8497,0.760668)-- (1.13908,0.);
\draw [line width=0.4pt,dash pattern=on 1pt off 1pt,color=ffqqqq] (1.13908,0.)-- (1.8496973539489154,-0.7606705586483418);
\begin{scriptsize}
\draw [fill=ffqqqq] (0.,0.) circle (0.5pt);
\draw [fill=ffqqqq] (0.,1.94877) circle (1.5pt);
\draw [fill=ffqqqq] (-1.38032,1.37565) circle (1.5pt);
\draw [fill=ffqqqq] (-1.70862,0.) circle (1.5pt);
\draw [fill=ffqqqq] (-0.569538,0.) circle (1.5pt);
\draw [fill=ffqqqq] (0.,-1.94877) circle (1.5pt);
\draw [fill=ffqqqq] (1.38032,1.37565) circle (1.5pt);
\draw [fill=ffqqqq] (1.70862,0.) circle (1.5pt);
\draw [fill=ffqqqq] (0.569538,0.) circle (1.5pt);
\draw [fill=ffqqqq] (-1.38032,-1.37565) circle (1.5pt);
\draw [fill=ffqqqq] (1.38032,-1.37565) circle (1.5pt);
\draw [fill=ffqqqq] (-0.766932,1.84711) circle (0.5pt);
\draw [fill=ffqqqq] (-1.8497,0.760668) circle (0.5pt);
\draw [fill=ffqqqq] (-1.13908,0.) circle (0.5pt);
\draw [fill=ffqqqq] (0.766932,1.84711) circle (0.5pt);
\draw [fill=ffqqqq] (1.8497,0.760668) circle (0.5pt);
\draw [fill=ffqqqq] (1.13908,0.) circle (0.5pt);
\draw [fill=ffqqqq] (-0.766932,-1.84711) circle (0.5pt);
\draw [fill=ffqqqq] (-1.8497,-0.760668) circle (0.5pt);
\draw [fill=ffqqqq] (0.766932,-1.84711) circle (0.5pt);
\draw [fill=ffqqqq] (1.8497,-0.760668) circle (0.5pt);
\draw[] (0.00096496167399,-2.330438200585347174) node { $(iv)$ };
\end{scriptsize}
\end{tikzpicture}
\caption{ } \label{Fig3}
\end{figure}

The following proposition plays an important role in the paper.

\begin{prop} \label{prop000}
Let $\ga_n$ be an optimal set of $n$-means for $P$, and $n\geq 5$. Then, $\ga_n$ contains at least one point from $L_1$, the Voronoi region of which does not contain any point from $L_2$; and at least one point from $L_2$, the Voronoi region of which does not contain any point from $L_1$.
\end{prop}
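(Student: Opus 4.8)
The plan is to argue by contradiction, treating the two assertions separately but symmetrically, and in each case to contradict optimality by exhibiting structure that is strictly more expensive than a natural competitor. Throughout I would use three facts freely: the Voronoi region $M(p\mid\ga_n)$ of each $p\in\ga_n$ is convex, so it meets the line $L_1=\{x_2=0\}$ in a (possibly empty) segment and meets the circle $L_2$ in at most two arcs; each center is the conditional expectation of its region by Proposition~\ref{prop0}; and the exact one-dimensional formulas of Theorem~\ref{th31} and Theorem~\ref{th32} let me evaluate the contribution of any sub-segment of $L_1$ or sub-arc of $L_2$ once I know how the means are allotted to it. I would also use that $V_n$ is strictly decreasing.

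First I would establish the existence of a pure circle point. Suppose, to the contrary, that every $p\in\ga_n$ whose region meets $L_2$ also captures positive mass from $L_1$. I would focus on the top cap $A:=\{(\cos\gq,\sin\gq):\frac{\pi}{4}\le\gq\le\frac{3\pi}{4}\}$, on which $x_2\ge\frac{1}{\sqrt2}$ and $P(A)=\frac18$. Any region meeting $A$ is convex and also meets $L_1$, so its centroid is pulled strictly below the arc; splitting its mass into the circular part (height $\ge\frac1{\sqrt2}$) and the diameter part (height $0$), a direct centroid computation shows each such region contributes error bounded below by (essentially) the product of its two partial masses. Provided these mixed regions each retain a non-negligible diameter mass, summing over the regions covering $A$ yields a positive lower bound $c_0$ for $V(P;\ga)$ that is independent of $n$. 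Since $V_n<c_0$ for all large $n$, this rules out the bad configuration for those $n$, and the finitely many remaining values $5\le n\le10$ are settled by the explicit optimal sets of Propositions~\ref{prop5}, \ref{prop6} and~\ref{prop7}, each of which visibly contains a point of $L_2$ whose region avoids $L_1$.

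The existence of a pure diameter point I would handle the same way, now comparing against the central portion of $L_1$ near the origin, which lies at distance $1$ from $L_2$: a region covering it while also meeting $L_2$ has its centroid pulled off the axis toward the circle, again forcing an error floor, and the cases $5\le n\le10$ are read off from the same explicit optimal sets, in each of which the central mean $(0,0)$ (or a nearby symmetric pair) has Voronoi region contained in $L_1$. Alternatively, once a pure circle point is known to exist, I would feed its arc into Theorem~\ref{th32} and the complementary diameter into Theorem~\ref{th31} to build the cheaper split competitor explicitly and read off the contradiction.

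The main obstacle is precisely the quantitative comparison, and in particular the degenerate scenario in which a sub-arc of $A$ (or the central sub-segment of $L_1$) is covered by a region that meets the other set in a set of arbitrarily small mass: there the naive error floor degenerates, yet such a region is, for the purposes of the statement, already essentially a pure point. Making this rigorous—so that $c_0$ can be chosen uniformly and the argument closes for every $n\ge5$ rather than only asymptotically—is where the care lies. I would resolve it by combining the convexity bound on how finely $L_2$ can be subdivided by regions that all reach $L_1$ (each contributing at most two arcs) with the exact small-$n$ data of Propositions~\ref{prop5}--\ref{prop7}, thereby pushing the threshold beyond which the floor argument applies below $11$ and covering the gap with the explicitly known optimal sets.
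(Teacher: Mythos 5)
Your overall architecture --- contradiction, a quantitative error floor played off against the monotone bound $V_n\le V_{10}$, and the explicit sets of Propositions~\ref{prop5}--\ref{prop7} for $5\le n\le 10$ --- is the same as the paper's, but the mechanism you propose for the error floor does not close, and the obstacle you flag yourself is fatal rather than merely delicate. Your per-region estimate is of the form $\frac{m_1m_2}{m_1+m_2}\,d^2$, with $m_1$ the circular mass a mixed region captures in the cap $A$ and $m_2$ its diameter mass. Under the contradiction hypothesis nothing forces $m_2$ to be bounded away from $0$, and nothing bounds the number $k$ of regions covering $A$ independently of $n$: pairwise disjoint convex sets each containing a point of $A$ and a point of $L_1$ can be realized as arbitrarily thin wedges, and a convex Voronoi polygon can meet the circle in more than two arcs (one per edge, not two in total), so the convexity count you invoke does not cap $k$. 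With $k$ unbounded, the residual one-dimensional floor $\sum_i c\,(m_1^{(i)})^3\ge c/(512k^2)$ degenerates as well, so no constant $c_0$ independent of $n$ emerges and the comparison with $V_{10}=0.039046$ fails precisely for the large $n$ the floor was meant to handle. Finally, ``essentially a pure point'' is not a pure point: the proposition asserts the existence of a codepoint whose Voronoi region is literally disjoint from the other component, and in the degenerate regime your argument never produces one.

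The paper avoids this by counting codepoints rather than by estimating the mixing cost of each region. If no point of $\ga_n$ serves $L_1$ alone, then $L_1$ is served only by the points whose regions straddle the two junctions $(\pm 1,0)$, of which there are two; hence the $L_1$-contribution to the distortion already exceeds the \emph{exact} optimal two-means error for the mass-$\frac12$ uniform distribution on $[-1,1]$, namely $\frac{1}{24}=0.04167$, computed from Theorem~\ref{th31}. Since $V_n\le V_{10}=0.039046<\frac1{24}$ for $n\ge 11$, this is a floor that is uniform in $n$ by construction, because it comes from a fixed number of codepoints and not from a per-region variance estimate; the argument for a pure $L_2$ point is symmetric, and the cases $5\le n\le 10$ are read off the explicit optimal sets exactly as you propose. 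Any repair of your route must supply a uniform bound on the number of mixed regions meeting the cap (or the central segment), which is in substance the paper's counting step; without it, the variance bound alone cannot yield the contradiction.
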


\begin{proof}
Let $V_n$ denote the $n$th quantization error for any positive integer $n$. By the previous propositions, the lemma is true for $5\leq n\leq 10$. Let $n\geq 11$.  Then, $V_n\leq V_{11}<V_{10}=0.039046$. For the sake of contradiction, assume that for $n\geq 11$, the set $\ga_n$ does not contain any point from $L_1$, the Voronoi region of which does not contain any point from $L_2$. Then,
\[V_n>\int_{L_1}\min_{a\in \set{(-\frac 12 , 0), (0,\frac 12)}}\rho((x, 0), a) dP=\frac 1 4\int_{-1}^0 \rho((t, 0), (-\frac 12, 0)) dt+\frac 14\int_{0}^1 \rho((t, 0), (\frac 12, 0)) dt=\frac{1}{24},\]
implying $V_n>\frac{1}{24}=0.0416667>V_{10}$, which leads to a contradiction.  Hence, $\ga_n$ contains at least one point from $L_1$, the Voronoi region of which does not contain any point from $L_2$. Similarly, we can prove the other part of the proposition. Thus, the proof of the proposition is complete.
\end{proof}

We now state and prove the following theorem, which is the main theorem of this section. Notice that we are saying the theorem as the main theorem of this section, because as mentioned in Remark~\ref{rem22}, this theorem helps us to calculate all the optimal sets of $n$-means, and so, the $n$th quantization errors for all $n\geq 5$ for the mixed distribution $P$.
\begin{theorem} \label{Th0}
Let $n\geq 5$ be a positive integer, and let $\ga_n$ be an optimal set of $n$-means for $P$. Let $3k+2\leq n\leq 3k+4$ for some positive integer $k$. Then, $\ga_n$ contains $k$ elements from $L_1$, the Voronoi regions of which do not contain any point from $L_2$.
\end{theorem}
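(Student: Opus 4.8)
The plan is to fix an integer $n$ with $3k+2\le n\le 3k+4$, take an optimal set $\ga_n$, and let $\ell$ denote the number of points of $\ga_n$ whose Voronoi region is contained in $L_1$; the entire statement is the single assertion $\ell=k$. First I would dispose of the base cases: Propositions~\ref{prop5}, \ref{prop6} and \ref{prop7} exhibit optimal sets for $5\le n\le 10$, and inspecting them shows $\ell=1$ for $n=5,6,7$ and $\ell=2$ for $n=8,9,10$, i.e. the claim for $k=1,2$. Hence one may assume $n\ge 11$, so that $V_n\le V_{11}<V_{10}$. By Proposition~\ref{prop000} we already have $\ell\ge 1$, and likewise at least one point of $\ga_n$ has its Voronoi region inside $L_2$.

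The next step is structural. Using that $P$ is invariant under the reflections across both coordinate axes, I would argue that an optimal $\ga_n$ may be chosen symmetric and that it contains exactly two \emph{transition} points---points whose Voronoi region meets both $L_1$ and $L_2$---one on each side of the $x_2$-axis near $(\pm1,0)$, exactly as in the explicit Cases of Propositions~\ref{prop5} and \ref{prop6}. Granting this, the $\ell$ interior diameter points quantize a central segment $[-c,c]\subseteq L_1$, while the remaining $m=n-\ell-2$ points with regions inside $L_2$, together with the two transition arcs, quantize the circle. The free boundary $c$ and the parametric position $\gq=\pi-b$ of each transition point on $L_2$ are fixed by the canonical equations, and by Theorem~\ref{th31} (rescaled by the density $\tfrac14$) the diameter contribution equals $\dfrac{c^3}{6\ell^2}$, while the circle contribution is governed by Theorem~\ref{th32}. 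Write $E_n(\ell)$ for the resulting optimal distortion under the constraint that exactly $\ell$ interior diameter points are used.

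The core is then to show that $E_n$ is strictly minimized at $\ell=k$ throughout the band, which I would establish by two lower-bound comparisons in the spirit of Proposition~\ref{prop000}. For $\ell\le k-1$: too few interior points force the diameter to be under-resolved, and bounding the $L_1$-part of the error below by the optimal quantization error of the diameter measure using $\ell$ interior points (Theorem~\ref{th31}) gives a value exceeding $E_n(k)$, a contradiction. For $\ell\ge k+1$: then $m=n-\ell-2\le 2k+1$, so at most $2k+1$ points have their regions inside $L_2$, and bounding the $L_2$-part below by the optimal arc-quantization error with $2k+1$ points (Theorem~\ref{th32}) again exceeds $E_n(k)$. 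The width-$3$ bands arise because, comparing marginal decrements, transferring one resource from the circle to the diameter lowers the total error only while $\ell<k$: the ratio of the two one-dimensional quantization coefficients, $Q_{L_2}/Q_{L_1}=\pi^2$, makes the circle absorb about $\pi^{2/3}$ points for every diameter point, so each unit increment of $\ell$ costs roughly three units of $n$.

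The main obstacle is to make these comparisons sharp enough to pin down the exact endpoints $3k+2$ and $3k+4$ rather than leaving an off-by-one ambiguity: this forces one to use the exact finite forms of the errors in Theorems~\ref{th31} and \ref{th32}, not merely their leading asymptotics, and to control the free boundary $c$ and the transition angle $b$ through the canonical equations uniformly in $k$. A secondary difficulty, which must be settled first, is the structural claim that an optimal set is symmetric and has exactly two transition points; ruling out configurations with a different transition count or with broken symmetry is where the finite case analysis of Propositions~\ref{prop5} and \ref{prop6} would have to be promoted to general $n$.
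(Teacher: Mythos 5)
Your setup coincides with the paper's own: both arguments reduce the problem, via Proposition~\ref{prop000} together with the (asserted rather than proved) claim that an optimal set has exactly two transition points whose Voronoi regions meet both $L_1$ and $L_2$, to a family of configurations parametrized by the number $\ell$ of interior diameter points and the split $n_1+n_2$ of the remaining circle points; both then express the distortion through Theorems~\ref{th31} and~\ref{th32} as $\frac{a^3}{6\ell^2}+D_{n_1}+D_{n_2}+D(a,b,c)$ and anchor the cases $k=1,2$ in Propositions~\ref{prop5}--\ref{prop7}. Where you genuinely diverge is the last step. The paper inducts on $k$ and obtains $\ga_{n+1}$ from the optimal $(3m+4)$-configuration by adding a single point to $L_1$ or to $L_2$, comparing only the three adjacent distributions $V(m+1,m+1,m+1)$, $V(m+2,m+1,m)$, $V(m+1,m+2,m)$; you instead propose to minimize $E_n(\ell)$ over all $\ell$ by direct lower-bound comparisons. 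Your route avoids the paper's tacit assumption that optimal sets nest under increments of $n$ (they need not), which is a real advantage, at the cost of having to control $E_n(\ell)$ for every $\ell$ rather than for neighbours of a known optimum.

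The concrete gap is the one you half-name yourself, and it is more serious than an off-by-one ambiguity. Your own leading-order count gives $m/\ell\to\pi^{2/3}\approx 2.145$ circle points per interior diameter point, i.e.\ $\ell\approx n/(1+\pi^{2/3})\approx 0.318\,n$, whereas the band $3k+2\le n\le 3k+4$ forces $\ell=k\approx n/3\approx 0.333\,n$; the difference grows like $0.015\,n$ and is therefore unbounded in $k$. Minimizing the leading-order error $\frac{1}{6\ell^2}+\frac{\pi^2}{6(n-\ell-2)^2}$ over $\ell$ one finds that for $n$ in the hundreds the allocation $\ell\approx n/(1+\pi^{2/3})$ beats $\ell=k$ by a fixed positive relative amount (this is also visible in the coefficient $\frac{3}{8}(4+\pi^2)\approx 5.201$ versus $\frac{(1+\pi^{2/3})^3}{6}\approx 5.184$). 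Consequently your proposed comparison ``$E_n(\ell)>E_n(k)$ for $\ell\ge k+1$'' cannot be closed from the asymptotic forms of Theorems~\ref{th31} and~\ref{th32}; you would have to show that the exact finite-$n$ terms, including $D(a,b,c)$ and the cosine corrections, overturn a leading-order deficit uniformly in $k$, and the sign of the asymptotics says they eventually cannot. The paper's induction does not resolve this either---it verifies the three-way comparison for small parameters and asserts it in general---so if you pursue your route you should treat the uniform-in-$k$ comparison as the central unresolved point of the whole theorem, not as a sharpening of constants.
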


\begin{proof}
By Proposition~\ref{prop000}, for $n\geq 5$, the set $\ga_n$ always contains points from $L_1$, the Voronoi regions of which do not contain any point from $L_2$, and points from $L_2$, the Voronoi regions of which do not contain any point from $L_1$.
Since the Voronoi region of a point in an optimal set covers maximum area within a shortest distance $P$-almost surely, the set $\ga_n$, given in the theorem, must contain the two points, the Voronoi regions of which contain points from both $L_1$ and $L_2$, in other words, the Voronoi regions of these two points contain points around the two intersections of $L_1$ and $L_2$. Each of the remaining $n-2$ points occurs due to the uniform distribution on $L_1$, or $L_2$, the Voronoi region of which contains points only from $L_1$, or from $L_2$, respectively.

Let $n=n_1+n_2+k+2$ be such that $\ga_n$ contains $k$ elements from $L_1$, the Voronoi regions of which do not contain any point from $L_2$; $n_1$ elements from above the $x_1$-axis, the Voronoi regions of which do not contain any point from $L_1$, and  $n_2$ elements from below the $x_1$-axis, the Voronoi regions of which do not contain any point from $L_1$. Then, there exist three real numbers $a, \, b$, and $c$,  where $-1<a<1$,  $0<b<\frac \pi 2$, and $0<c<\frac \pi 2$, such that the following occur:

$(i)$ The $k$ elements that $\ga_n$ contains from $L_1$ occur due to the uniform distribution on $[-a, a]$, and as mentioned in Theorem~\ref{th31}, are given by the set
\[\set{-a+\frac{2i-1}{k}a : 1\leq i\leq k},\]
with distortion error given by
\begin{align*} &k \Big(\te{distortion error due to the point } -a+\frac a k \te{ in the interval } [-a, -a+\frac{2a} k]\Big)\\
&= \frac k 4 \int_{-a}^{-a+\frac{2a} k}\Big(t-(-a+\frac{a} k)\Big)^2 dt=\frac{a^3}{6 k^2}.
\end{align*}

$(ii)$ The $n_1$ elements that $\ga_n$ contains from above the $x_1$-axis, the Voronoi regions of which do not contain any point from $L_1$, occur due to the uniform distribution on the circular arc $\set{(\cos\gq, \sin \gq) : b\leq \gq\leq \pi-b}$, and by Theorem~\ref{th32}, are given by the set
\begin{equation*} \label{eq0000} \left\{\frac{2n_1}{\pi-2b}\sin\frac{\pi-2b}{2n_1}\Big(\cos(b+(2j-1)\frac{\pi-2b}{2n_1}), \sin(b+(2j-1)\frac{\pi-2b}{2n_1})\Big) : 1\leq j\leq n_1\right\},\end{equation*}
with distortion error
\begin{align*}
&n_1\Big(\frac 1{4\pi} \int_b^{b + \frac{\pi -2 b}{n_1}}\rho\Big((\cos\gq, \sin\gq),  \frac{2 n_1}{\pi -2 b} \sin (\frac{\pi -2 b}{2 n_1} )\Big(\cos(b+\frac{\pi -2 b}{2 n_1}), \sin(b+\frac{\pi -2 b}{2 n_1})\Big)\Big)d\gq\Big)\\
&=\frac{(\pi -2 b)^2-2 n_1^2+2 n_1^2 \cos (\frac{2 b-\pi }{n_1})}{4 \pi  (\pi -2 b)},
\end{align*}
and we denote it by $D_{n_1}$.

$(iii)$ The $n_2$ elements that $\ga_n$ contains from below the $x_1$-axis, the Voronoi regions of which do not contain any point from $L_1$, occur due to the uniform distribution on the circular arc $\set{(\cos\gq, \sin \gq) : \pi+c\leq \gq\leq 2\pi-c}$, and by Theorem~\ref{th32}, are given by the set
\begin{equation*} \label{eq0000} \left\{\frac{2n_2}{\pi-2c}\sin\frac{\pi-2c}{2n_2}\Big(\cos(\pi+c+(2j-1)\frac{\pi-2c}{2n_2}), \sin(\pi+c+(2j-1)\frac{\pi-2c}{2n_2})\Big) : 1\leq j\leq n_2\right\},\end{equation*}
with distortion error
\begin{align*}
&n_2\Big(\frac 1{4\pi} \int_{\pi+c}^{\pi+c + \frac{\pi-2c}{n_2}}\rho\Big((\cos\gq, \sin\gq),  \frac{2 n_2}{\pi-2c} \sin (\frac{\pi-2c}{2 n_2} )\Big(\cos(\pi+c+\frac{\pi-2c}{2 n_2}), \sin(\pi+c+\frac{\pi-2c}{2 n_2})\Big)\Big)d\gq\Big)\\
&=\frac{(\pi -2 c)^2-2 n_2^2+2 n_2^2 \cos (\frac{2 c-\pi }{n_2})}{4 \pi  (\pi -2 c)},
\end{align*}
and we denote it by $D_{n_2}$.

$(iv)$ The two points in $\ga_n$, the Voronoi regions of which contain points from both $L_1$ and $L_2$, are given by the set $\set{(-r, s), (r, s)}$, where
\begin{align*}(-r, s)&=\frac{\frac{1}{4} \int_{-1}^{-a} (t,0)\, dt+\frac 1 {4\pi} \int_{\pi -b}^{ \pi+c } (\cos \theta,\sin\theta )d\theta }{\frac{1}{4} \int_{-1}^{-a}   dt+\frac 1 {4\pi} \int_{\pi -b}^{\pi+c } d\theta}\\
&= \Big(-\frac{-\pi  a^2+2 \sin b+2 \sin c+\pi }{2 (-\pi  a+b+c+\pi )},\frac{\cos c-\cos b}{-\pi  a+b+c+\pi }\Big),
\end{align*}
i.e.,
\[r=\frac{-\pi  a^2+2 \sin b+2 \sin c+\pi }{2 (-\pi  a+b+c+\pi )}, \te{ and } s=\frac{\cos c-\cos b}{-\pi  a+b+c+\pi },\]
and the distortion error for both the two points is given by
\begin{align*} & 2\Big(\frac{1}{4} \int_{-1}^{-a} \rho\Big((t,0), (-r, s)\Big)\Big)\, dt+\frac 1 {4\pi} \int_{\pi -b}^{ \pi+c } \rho\Big((\cos \gq, \sin \gq), (-r, s)\Big)\Big)\,d\theta\Big)\\
&=\frac{1}{24 \pi  (-\pi  a+b+c+\pi )}\Big(\pi ^2 a^4-4 \pi  a^3 b-4 \pi  a^3 c-4 \pi ^2 a^3+12 \pi  \left(a^2-1\right) \sin b\\
&\qquad +12 \pi  a^2 \sin c+6 \pi ^2 a^2-12 \pi  a b-12 \pi  a c-4 \pi ^2 a+12 b^2+24 b c+24 \cos (b+c)+16 \pi  b\\
&\qquad +12 c^2+16 \pi  c-12 \pi  \sin c+\pi ^2-24\Big),
\end{align*}
and we denote it by $D(a, b, c)$.

Let $V(n_1, n_2, k)$ denote the distortion error due to the all above $n_1+n_2+k+2$ elements in $\ga_n$. Then, we have
\begin{align} \label{eq90}
V(n_1, n_2, k)=\frac{a^3}{6 k^2}+D_{n_1}+D_{n_2}+D(a, b, c).
\end{align}
Let $n_1, n_2$, and $k$ be fixed. Then, using the partial derivatives we can obtain the following equations
\begin{equation} \label{eq91} \frac{\pa}{\pa a}(V(n_1, n_2, k))=0, \ \frac{\pa}{\pa b}(V(n_1, n_2, k))=0, \te{ and } \frac{\pa}{\pa c}(V(n_1, n_2, k))=0.
\end{equation}
For a given set of values of $n_1, n_2$, and $k$, solving the equations in \eqref{eq91}, we can obtain the values of $a, b, c$. Putting the values of $a, b, c$ in \eqref{eq90}, we can obtain the distortion error for the given set of values of $n_1, n_2, k$.

Now, to prove the theorem we use induction on $k$. If $k=1$, and $k=2$, the theorem is true due to the previous propositions. Let us assume that the theorem is true for $k=m$, i.e., when $3m+2\leq n\leq 3m+4$. We now prove that the theorem is true for $3(m+1)+2\leq n\leq 3(m+1)+4$.
By the assumption, the theorem is true for $n=3m+4$, i.e., the set $\ga_{3m+4}$ contains $m$ points from $L_1$, the Voronoi regions of which do not contain any point from $L_2$, and $(2m+2)$ points occur due to the uniform distribution on $L_2$, the Voronoi region of which do not contain any point from $L_1$. Again, due to the mixed distribution with equal weights to the component probabilities, and symmetry of the circle with respect to the $x_1$-axis, we can can assume that $\ga_n$ contains $m+1$ elements from above, and $m+1$ elements from below. Now, to calculate $\ga_{n+1}$, we need to add one extra point either to $L_1$, or $L_2$ in an optimal way, i.e., the Voronoi regions of the new point will contain only the points from $L_1$, or from $L_2$, and the overall distortion error due to $n+1$ points becomes smallest.
First suppose that the extra point is added to $L_1$, the Voronoi region of which does not contain any point from $L_2$. As described above using \eqref{eq90}, we calculate the distortion error $V(m+1, m+1, m+1)$. Next, suppose that the extra point is added to $L_2$, the Voronoi region of which does not contain any point from $L_1$, and using \eqref{eq90}, we calculate the distortion error $V(m+2, m+1, m)$, or $V(m+1, m+2, m)$. We see that the distortion error $V(m+1, m+1, m+1)$ is the smallest, which implies the fact that $\ga_{n+1}$ contains $m+1$ points from $L_1$. Once, $\ga_{n+1}$ is known, similarly we can obtain $\ga_{n+2}$, and $\ga_{n+3}$ with distortion errors, respectively, $V(m+1, m, m+1)$ and $V(m+1, m+1, m+1)$. Thus, we see that each of $\ga_{n+1}$, $\ga_{n+2}$, and $\ga_{n+3}$ contains $m+1$ points from $L_1$, the Voronoi regions of which do not contain any point from $L_2$. Notice that $n+1=3(m+1)+2$, $n+2=3(m+1)+3$, and $n+1=3(m+1)+4$, i.e., for the positive integer $n$ satisfying $3(m+1)+2\leq n\leq 3(m+1)+4$, the set $\ga_n$ contains $m+1$ elements from $L_1$, the Voronoi regions of which do not contain any point from $L_2$. Thus, the theorem is true for $k=m+1$ if it is true for $k=m$. Hence, by the principle of mathematical induction, the theorem is true for all positive integers $k$, and thus, the proof of the theorem is complete.
\end{proof}

\begin{remark} \label{rem22}
For $n\geq 5$, let $3k+2\leq n\leq 3k+4$ for some positive integer $k$. Then, by Theorem~\ref{Th0}, we can say that if $n-k-2$ is an even number, then an optimal set of $n$-means contains $\frac 12(n-k-2)$ elements from either side of the $x_1$-axis, the Voronoi regions of which do not contain any point from $L_1$; and if $n-k-2$ is an odd number, then an optimal set of $n$-means contains $\frac 12\lfloor n-k-2\rfloor$ elements from one side of the $x_1$-axis, and $\frac 12\lfloor n-k-2\rfloor+1$ elements from the other side of the $x_1$-axis, the Voronoi regions of which do not contain any point from $L_1$. Thus, by Theorem~\ref{Th0}, using Theorem~\ref{th31}, and Theorem~\ref{th32}, we can easily determine the optimal sets of $n$-means and the $n$th quantization errors for all $n\geq 5$.
\end{remark}

The following proposition gives the quantization dimension and the quantization coefficient for the mixed distribution.
\begin{prop} \label{prop90}
Quantization dimension $D(P)$ of the mixed distribution $P$ is one, which is the dimension of the underlying space, and the quantization coefficient exists as a finite positive number which equals $\frac{3}{8} \left(4+\pi ^2\right)$.
\end{prop}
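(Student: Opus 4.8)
The plan is to extract the precise $n^{-2}$ asymptotics of $V_n$ from the explicit structure furnished by Theorem~\ref{Th0} and Remark~\ref{rem22}, and then read off both invariants from a single relation $V_n\sim C\,n^{-2}$. By Theorem~\ref{Th0}, for $3k+2\le n\le 3k+4$ the optimal set $\ga_n$ consists of $k$ diameter points, $n_1$ upper-arc points, $n_2$ lower-arc points, and the two junction points, with $n=n_1+n_2+k+2$ and (by Remark~\ref{rem22}) $n_1,n_2$ as equal as possible; hence $k/n\to\frac13$ and $n_1/n,\,n_2/n\to\frac13$. The $n$th quantization error is then $V_n=V(n_1,n_2,k)$ given by \eqref{eq90}, where $a,b,c$ solve the first-order conditions \eqref{eq91}.

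First I would establish the limits $a\to 1$ and $b,c\to 0$ as $n\to\infty$. Since $V_n\to 0$ and every summand in \eqref{eq90} is nonnegative, the junction distortion $D(a,b,c)\to 0$; but if $b$ stayed bounded below by some $\delta>0$ along a subsequence, the junction point $(-r,s)$ near $(-1,0)$ would have to represent an arc of angular width at least $\delta$ carrying positive mass, forcing a distortion bounded below by a positive constant, a contradiction. The same argument gives $c\to 0$ and $a\to 1$. Next I would quantify the rates: the conditions \eqref{eq91}, equivalently the canonical equations balancing neighbouring Voronoi cells, force the relevant cell sizes to be comparable, so the diameter spacing $2a/k=\Theta(1/n)$ controls $1-a$ and the arc spacings $(\pi-2b)/n_1,\,(\pi-2c)/n_2=\Theta(1/n)$ control $b,c$; thus $1-a,\,b,\,c=O(1/n)$. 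Consequently the junction point represents a diameter segment of length $1-a=O(1/n)$ together with an arc of width $b+c=O(1/n)$, both of mass $O(1/n)$, whence $D(a,b,c)=O(1/n^3)$ and $n^2D(a,b,c)\to 0$.

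Now I can pass to the limit term by term in $n^2V_n$. Using $a\to 1$ and $n/k\to 3$ gives $n^2\cdot\frac{a^3}{6k^2}\to\frac32$. Expanding the cosine in $D_{n_1}$ as $n_1\to\infty$ yields $D_{n_1}=\frac{(\pi-2b)^3}{48\pi\,n_1^2}(1+o(1))$, so with $b\to 0$ and $n/n_1\to 3$ one obtains $n^2D_{n_1}\to\frac{3\pi^2}{16}$, and likewise $n^2D_{n_2}\to\frac{3\pi^2}{16}$. Adding these with the vanishing junction term gives
\[
\lim_{n\to\infty} n^2V_n=\frac32+\frac{3\pi^2}{16}+\frac{3\pi^2}{16}=\frac{3}{8}(4+\pi^2),
\]
a finite positive number, which is precisely the $s$-dimensional quantization coefficient for $s=1$. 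Finally, since $V_n\sim\frac{3}{8}(4+\pi^2)\,n^{-2}$ gives $-\log V_n=2\log n+O(1)$, the definition yields $D(P)=\lim\frac{2\log n}{-\log V_n}=1$, the dimension of the underlying path $L$.

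The main obstacle I anticipate is the rate estimate $1-a,\,b,\,c=O(1/n)$ needed to kill the junction term: while the qualitative limits $a\to 1$, $b,c\to 0$ follow from the soft argument above, pinning down the $\Theta(1/n)$ scaling rigorously requires analysis of the coupled system \eqref{eq91}. A clean alternative would be a sandwich argument: bound $V_n$ above by the distortion of an explicit near-optimal configuration (full diameter coverage on $[-1,1]$ together with evenly spaced points on each semicircular arc), which is computed directly and already gives $\frac{3}{8}(4+\pi^2)\,n^{-2}(1+o(1))$, and bound $V_n$ below using the nonnegativity of the three intrinsic contributions in \eqref{eq90} together with the limits $a\to1$, $b,c\to0$; the two bounds then squeeze $n^2V_n$ to the stated value.
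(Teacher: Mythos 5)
Your proposal is correct and follows essentially the same route as the paper: both rest on the structure from Theorem~\ref{Th0} and the decomposition \eqref{eq90}, send $a\to1$, $b,c\to0$, and compute $\lim_{n\to\infty} n^2V_n=\frac{3}{8}(4+\pi^2)$, from which $D(P)=1$ follows. The only real difference is organizational: the paper sandwiches a general $n$ between consecutive terms of the subsequence $n=3\ell+2$ using monotonicity of $V_n$, whereas you take the limit directly along all $n$ via $k/n,n_1/n,n_2/n\to\frac13$, and you are somewhat more explicit that the junction term $D(a,b,c)$ must be $o(n^{-2})$ rather than merely $o(1)$ --- a point the paper's proof passes over silently.
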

\begin{proof} By Remark~\ref{rem22}, we see that if $n$ is of the form $n=3k+2$ for some positive integer $k$, then $\ga_n$ contains $k$ elements from $L_1$, the Voronoi regions of which do not contain any point from $L_2$, and $k$ elements from the above, and $k$ elements from below the $x_1$-axis, the Voronoi region of which do not contain any point from $L_1$.
For $n\in \D N$, $n\geq 5$, let $\ell(n)$ be the unique positive integer such that $3\ell(n)+2 \leq n<3(\ell(n)+1)+2$. Then,
$V_{3(\ell(n)+1)+2}<V_n\leq V_{3\ell(n)+2}$ implying
\begin{align} \label{eq45}
\frac {2 \log (3\ell(n)+2)}{-\log V_{3(\ell(n)+1)+2}}<\frac {2\log n}{-\log V_n} <\frac{2 \log (3(\ell(n)+1)+2)}{-\log V_{3\ell(n)+2}}.
\end{align}
Notice that if $n \to \infty$, then $\ell(n) \to \infty$. Moreover, if $n\to \infty$, they by \eqref{eq90} and \eqref{eq91}, we can see that $a\to 1$, $b\to 0$, and $c\to 0$. Assume that $n$ is sufficiently large, in other words, assume that $\ell(n)$ is sufficiently large, and then as $a\to 1$, $b\to 0$, and $c\to 0$, by \eqref{eq90} we have $D(a, b, c)\to 0$, implying
\[V_{3\ell(n)+2}=V(\ell(n), \ell(n), \ell(n))=\frac{-6 \ell(n)^4+6 \ell(n)^4 \cos \frac{\pi }{\ell(n)}+3 \pi ^2 \ell(n)^2+\pi ^2}{6 \pi ^2 \ell(n)^2},\]
yielding
\[\lim_{n\to \infty} \frac {2 \log (3\ell(n)+2)}{-\log V_{3(\ell(n)+1)+3}}=\underset{\ell(n)\to \infty }{\text{lim}}\frac{2 \log (3 \ell(n)+2)}{-\log \Big(\frac{-6 (\ell(n)+1)^4+3 \pi ^2 (\ell(n)+1)^2+6 (\ell(n)+1)^4 \cos (\frac{\pi }{\ell(n)+1})+\pi ^2}{6 \pi ^2 (\ell(n)+1)^2}\Big)}=1,\]
and \[\lim_{n\to \infty} \frac{2 \log (3(\ell(n)+1)+2)}{-\log V_{3\ell(n)+2}}=\underset{\ell(n)\to \infty }{\text{lim}}\frac{2 \log (3 (\ell(n)+1)+2)}{-\log \Big(\frac{-6 \ell(n)^4+6 \ell(n)^4 \cos (\frac{\pi }{\ell(n)})+3 \pi ^2 \ell(n)^2+\pi ^2}{6 \pi ^2 \ell(n)^2}\Big)}=1\]
and hence, by \eqref{eq45}, $\mathop{\lim}\limits_{n\to \infty} \frac {2\log n}{-\log V_n}=1$, which is the dimension of the underlying space. Again,
\begin{equation} \label{eq46} (3\ell(n)+2)^2V_{3(\ell(n)+1)+2}<n^2 V_n<(3(\ell(n)+1)+2)^2V_{3\ell(n)+2}.
\end{equation}
We have
\begin{align*}
&\lim_{n \to \infty} (3\ell(n)+2)^2V_{3(\ell(n)+1)+2}\\
&=\underset{\ell(n)\to \infty }{\text{lim}}(3 \ell(n)+2)^2 \frac{-6 (\ell(n)+1)^4+3 \pi ^2 (\ell(n)+1)^2+6 (\ell(n)+1)^4 \cos (\frac{\pi }{\ell(n)+1})+\pi ^2}{6 \pi ^2 (\ell(n)+1)^2}\\
&=\frac{3}{8} \left(4+\pi ^2\right),
\end{align*}
and
\begin{align*}
&\lim_{n \to \infty} (3(\ell(n)+1)+2)^2V_{3\ell(n)+2}\\
&=\underset{\ell(n)\to \infty }{\text{lim}}(3 (\ell(n)+1)+2)^2 \frac{-6 \ell(n)^4+6 \ell(n)^4 \cos (\frac{\pi }{\ell(n)})+3 \pi ^2 \ell(n)^2+\pi ^2}{6 \pi ^2 \ell(n)^2}=\frac{3}{8} \left(4+\pi ^2\right),
\end{align*}
and hence, by \eqref{eq46} we have
$\mathop{\lim}\limits_{n\to\infty} n^2 V_n=\frac{3}{8} \left(4+\pi ^2\right)$, i.e., the quantization coefficient exists as a finite positive number which equals $=\frac{3}{8} \left(4+\pi ^2\right)$.
Thus, the proof of the proposition is complete.
\end{proof}

\section{Optimal quantization for the mixture of two uniform distributions on two disconnected line segments} \label{sec3}

Let $P_1$ and $P_2$ be two uniform distributions, respectively, on the intervals $[0, \frac 12]$ and $[\frac 34, 1]$. Write
 \[J_1:=[0, \frac 12], \te{ and } J_2:=[\frac 34, 1].\]
 Let $f_1$ and $f_2$ be their respective density functions. Then, $f_1(x)=2$ if $x\in [0,\frac 12]$, and zero, otherwise; and $f_2(x)=4$ if $x\in [\frac 34, 1]$, and zero, otherwise. Let $P:=\frac 34 P_1+\frac 14 P_2$. In the sequel, for the mixed distribution $P$, we determine the optimal sets of $n$-means and the $n$th quantization errors for all positive integers $n$. By $E(P)$ and $V(P)$, we mean the expectation and the variance of a random variable with distribution $P$. By $\ga_n(\mu)$, we denote an optimal set of $n$-means with respect to a probability distribution $\mu$, and $V_n(\mu)$ represents the corresponding quantization error for $n$-means. If $\mu$ is the mixed distribution $P$, in the sequel, we sometimes denote it by $\ga_n$ instead of $\ga_n(P)$, and the corresponding quantization error is denoted by $V_n$ instead of $V_n(P)$.

\begin{lemma}
Let $P$ be the mixed distribution defined by $P=\frac 34 P_1+\frac 1 4 P_2$. Then, $E(P)=\frac{13}{32}$, and $V(P)=\frac{277}{3072}$.
\end{lemma}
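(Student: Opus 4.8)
The plan is to work directly with the density of the mixture. Since $P=\frac34 P_1+\frac14 P_2$ with $f_1=2$ on $J_1$ and $f_2=4$ on $J_2$, the density of $P$ is $\frac34 f_1+\frac14 f_2$, equal to $\frac32$ on $[0,\frac12]$ and to $1$ on $[\frac34,1]$; as a sanity check $\frac32\cdot\frac12+1\cdot\frac14=1$, so $dP$ integrates to one. With this density in hand, both $E(P)$ and $V(P)$ reduce to elementary polynomial integrals over the two disjoint intervals $J_1$ and $J_2$.

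First I would compute the mean. By linearity of the expectation in the mixture, $E(P)=\frac34 E(P_1)+\frac14 E(P_2)$, and each $E(P_i)$ is simply the midpoint of the corresponding interval, namely $\frac14$ for $J_1$ and $\frac78$ for $J_2$. Hence
\begin{align*}
E(P)=\frac34\cdot\frac14+\frac14\cdot\frac78=\frac{3}{16}+\frac{7}{32}=\frac{13}{32}.
\end{align*}
Equivalently one may evaluate $\int x\,dP=\frac32\int_0^{1/2}x\,dx+\int_{3/4}^1 x\,dx$ directly, obtaining the same value.

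For the variance I would use the identity $V(P)=\int x^2\,dP-(E(P))^2$ rather than averaging the component variances. The key point to emphasize is that the variance of a mixture is \emph{not} the weighted average $\frac34 V(P_1)+\frac14 V(P_2)$: there is an additional between-component contribution arising because the two supports are separated, and this is exactly captured by subtracting $(E(P))^2$ from the raw second moment. Using $\int x^2\,dP_i=\frac{a^2+ab+b^2}{3}$ for a uniform distribution on $[a,b]$, I obtain $\frac{1}{12}$ over $J_1$ and $\frac{37}{48}$ over $J_2$, so that
\begin{align*}
\int x^2\,dP=\frac34\cdot\frac{1}{12}+\frac14\cdot\frac{37}{48}=\frac{49}{192}.
\end{align*}
Subtracting $(E(P))^2=\left(\frac{13}{32}\right)^2=\frac{169}{1024}$ and clearing to the common denominator $3072$ gives $V(P)=\frac{784}{3072}-\frac{507}{3072}=\frac{277}{3072}$.

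There is no genuine conceptual obstacle here; the computation is routine. The only points requiring care are the arithmetic with the common denominator $3072=2^{10}\cdot 3$ and, conceptually, resisting the temptation to treat the mixture variance as a weighted average of the component variances, which would yield the wrong answer.
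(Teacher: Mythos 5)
Your proposal is correct and follows essentially the same route as the paper: the mean is computed by linearity of the mixture, and the variance by an elementary integral over the two supporting intervals (the paper integrates $(x-E(P))^2$ directly against each component, while you use the equivalent identity $V(P)=\int x^2\,dP-(E(P))^2$; all intermediate values, including $\int x^2\,dP=\frac{49}{192}$ and the final $\frac{277}{3072}$, check out). Your cautionary remark about the mixture variance not being the weighted average of the component variances is apt, and the paper's formula avoids that pitfall by centering both component integrals at $E(P)$ rather than at the component means.
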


\begin{proof}
We have
\[E(P)=\int x dP=\frac 34 \int x d(P_1(x))+\frac 14 \int x d(P_2(x))=\frac 34 \int_0^{\frac{1}{2}} 2x \, dx+\frac 14 \int_{\frac{3}{4}}^1 4 x \, dx\]
yielding $E(P)=\frac{13}{32}$, and
\[V(P)=\int (x-E(P))^2 dP=\frac 34 \int(x-E(P))^2 d(P_1(x))+\frac 14 \int(x-E(P))^2 d(P_2(x)),\]
implying
$V(P)=\frac{277}{3072}$, and thus, the lemma is yielded.
\end{proof}

\begin{remark}
The optimal set of one-mean is the set $\set{\frac{13}{32}}$, and the corresponding quantization error is the variance $V:=V(P)$ of a random variable with distribution $P$.
\end{remark}

\begin{lemma} \label{lemma2}
The set $\ga:=\set{ \frac 14, \frac 7 8}$ is an optimal set of two-means, and the corresponding quantization error is given by $V_2=\frac{13}{768}$.
\end{lemma}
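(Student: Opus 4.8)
The plan is to use the necessary optimality conditions of Proposition~\ref{prop0} to pin down the only admissible pair of means. Recall that the mixed density equals $\frac 32$ on $J_1$ and $1$ on $J_2$, so that $P(J_1)=\frac 34$ and $P(J_2)=\frac 14$. Since $P$ is continuous, an optimal set of two-means exists and consists of two distinct points $p_1<p_2$; by Proposition~\ref{prop0}, each $p_i$ is the conditional expectation of $X$ over its Voronoi region, and the boundary of the two Voronoi regions is the midpoint $m=\frac 12(p_1+p_2)$. The whole argument reduces to locating $m$.

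First I would dispose of the degenerate cases: if $m\le 0$ or $m\ge 1$, then one Voronoi region carries no mass, contradicting Proposition~\ref{prop0}$(i)$. Next I would rule out that the boundary falls strictly inside one of the two intervals. If $0<m<\frac 12$, then the region of $p_1$ meets the support in $[0,m]$ and that of $p_2$ in $[m,\frac 12]\cup J_2$; writing $p_1$ and $p_2$ as the centroids of these sets and imposing the self-consistency relation $m=\frac 12(p_1+p_2)$ produces a single quadratic in $m$, namely $48m^2-48m+13=0$, whose discriminant $-192$ is negative. Hence no optimal configuration has its boundary strictly inside $J_1$. The symmetric computation for $\frac 34<m<1$ (region of $p_1$ meeting the support in $J_1\cup[\frac 34,m]$, region of $p_2$ in $[m,1]$) yields $32m^2-16m+3=0$, again with negative discriminant $-128$, excluding a boundary strictly inside $J_2$.

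Consequently the boundary must satisfy $\frac 12\le m\le \frac 34$, so $J_1$ lies entirely in the Voronoi region of $p_1$ and $J_2$ entirely in that of $p_2$. The centroid conditions then force $p_1$ to be the midpoint $\frac 14$ of $J_1$ and $p_2$ the midpoint $\frac 78$ of $J_2$; one checks $m=\frac 9{16}\in(\frac 12,\frac 34)$, so this configuration is self-consistent and is the unique critical point, hence optimal. It remains only to evaluate the error, which decomposes as the weighted sum of the within-interval variances,
\[V_2=\frac 34\cdot\frac 1{48}+\frac 14\cdot\frac 1{192}=\frac 1{64}+\frac 1{768}=\frac{13}{768}.\]
The main obstacle is the middle step: one has to set up the two boundary-inside-an-interval cases correctly and carry out the (short) algebra showing both quadratics have negative discriminant, since this is precisely what forces the separation of $J_1$ from $J_2$.
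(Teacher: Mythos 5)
Your proof is correct, and it takes a genuinely different route from the paper's. The paper first computes the distortion of the candidate set $\set{\frac14,\frac78}$ to get the upper bound $V_2\le\frac{13}{768}$, and then eliminates every alternative configuration of $\set{a_1,a_2}$ by a chain of case-by-case lower bounds on the distortion (e.g.\ $a_1\ge\frac{13}{40}$, $a_2<\frac58$, $\frac58\le a_2\le\frac34$ with sub-cases on $a_1$, $\frac34\le a_2\le\frac{13}{16}$, \dots), each compared numerically against $\frac{13}{768}$. You instead work purely with the necessary conditions of Proposition~\ref{prop0}: parametrizing the configuration by the Voronoi boundary $m=\frac12(p_1+p_2)$, writing $p_1,p_2$ as centroids, and showing the self-consistency equation has no real root when $m$ lies strictly inside $J_1$ or $J_2$ (both quadratics $48m^2-48m+13=0$ and $32m^2-16m+3=0$ and their discriminants $-192$, $-128$ check out, as does the consistency check $m=\frac9{16}\in(\frac12,\frac34)$ and the final error $\frac34\cdot\frac1{48}+\frac14\cdot\frac1{192}=\frac{13}{768}$). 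Your argument is shorter, exact rather than numerical, and yields uniqueness of the stationary configuration, from which optimality follows since an optimal set exists and must be stationary. What the paper's bounding technique buys in exchange is robustness and reusability: it does not require the fixed-point system to collapse to a single solvable equation in one unknown, and the same upper-bound-versus-lower-bound template is what the paper deploys for three means (Lemma~\ref{lemma3}), for Propositions~\ref{prop0001} and~\ref{prop02}, and for the circle cases, where a complete enumeration of critical points would be harder; moreover, had your quadratics possessed real roots, you would still have had to compare distortion values to select the global minimizer, which is essentially the comparison the paper performs.
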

\begin{proof} Consider the set of two points $\gb$ given by $\gb:=\set{ \frac 14, \frac 7 8}$. The distortion error due to the set $\gb$ is given by
\begin{align*} &\int\min_{a\in \gb} (x-a)^2 dP=\int_{J_1}(x-\frac 14)^2 dP+\int_{J_2} (x-\frac 78)^2 dP=\frac{3}{4} \int_0^{\frac{1}{2}} 2 (x-\frac{1}{4})^2 \, dx+\frac{1}{4} \int_{\frac{3}{4}}^1 4 (x-\frac{7}{8})^2 \, dx\\
&=\frac{13}{768}=0.0169271.
\end{align*}
Since $V_2$ is the quantization error for two-means, we have $V_2\leq 0.0169271$. Let $\ga:=\set{a_1, a_2}$ be an optimal set of two-means. Since the points in an optimal set are the conditional expectations in their own Voronoi regions, without any loss of generality, we can assume that $0<a_1<a_2<1$. We now show that the Voronoi region of $a_1$ does not contain any point from $J_2$, and the Voronoi region of $a_2$ does not contain any point from $J_1$. Suppose that  $\frac{13}{40}\leq a_1$. Then,
\[V_2>\int_{[0, \frac{13}{40}]} (x-\frac{13}{40})^2 dP=\frac{2197}{128000}=0.0171641>V_2,\]
which is a contradiction, and so, we can assume that $a_1<\frac{13}{40}<\frac 12$. Since $a_1<\frac {13}{40}$, the Voronoi region of $a_1$ does not contain any points from $J_2$. If it contains points from $J_2$, then
$\frac 12(a_1+a_2)>\frac 34$, implying $a_2>\frac 32-a_1\geq \frac 32-\frac {13}{40} =\frac {47}{40}>1$, which is a contradiction. Hence, we can assume that \begin{equation}\label{eq001}
a_1\leq E(X : X \in J_1)=\frac 14, \te{ and } a_2\leq E(X : X\in J_2)=\frac 78.
\end{equation}
Suppose that $a_2<\frac 58$. Then,
\[V_2> \frac{1}{4} \int_{\frac{3}{4}}^1 4 (x-\frac{5}{8})^2 \, dx=\frac{13}{768}=0.0169271\geq V_2,\]
which leads to a contradiction. So, we can assume that $\frac 58\leq a_2$. Thus, by \eqref{eq001}, we have $\frac 58\leq a_2\leq \frac 78$.  Assume that $\frac 58\leq a_2\leq \frac 34$. Since $a_1\leq \frac 14$, the following cases can arise:

\tbf{Case~1.} $\frac 18\leq a_1\leq \frac 14$.

Then, notice that $\frac {13}{32}<\frac 12(\frac 14+\frac 58)=\frac 7{16}<\frac 12$, and so,
 \begin{align*}
&\int_{[0, \frac {13}{32}]}\min_{a \in \set{a_1, a_2}}(x-a)^2 dP=\frac{13 \left(3072 a_1^2-1248 a_1+169\right)}{65536},
 \end{align*}
the minimum value of which is $\frac{2197}{262144}$, and it occurs when $a_1=\frac{13}{64}$. Notice that for $a_1=\frac{13}{64}$, we have $\frac{13}{32}=0.40625<\frac{1}{2} (\frac{13}{64}+\frac{5}{8})=0.414063$.
Thus, we have
\begin{align*}
V_2\geq \frac{2197}{262144} +\frac{3}{4} \int_{\frac{13}{32}}^{\frac{7}{16}} 2  (x-\frac{1}{4} )^2 \, dx+\frac{3}{4} \int_{\frac{7}{16}}^{\frac{1}{2}} 2 (x-\frac{5}{8} )^2 \, dx+\frac{1}{4} \int_{\frac{3}{4}}^1 4  (x-\frac{3}{4})^2 \, dx=\frac{13603}{786432},
\end{align*}
yielding $V_2\geq 0.0172971>V_2$, which is a contradiction.

\tbf{Case~2.} $a_1<\frac 18$.

Then, $\frac 12(\frac 18+\frac 58)=\frac 38<\frac 12$, and so
\[V_3\geq \frac{3}{4} \int_{\frac{1}{8}}^{\frac{3}{8}} 2  (x-\frac{1}{8} )^2 \, dx+\frac{3}{4} \int_{\frac{3}{8}}^{\frac{1}{2}} 2 (x-\frac{5}{8} )^2 \, dx+\frac{1}{4} \int_{\frac{3}{4}}^1 4  (x-\frac{3}{4} )^2 \, dx=\frac{61}{3072}=0.0198568>V_3,\]
which leads to a contradiction.

Hence, by Case~1 and Case~2, we can conclude that $\frac 34\leq a_2\leq \frac 78$. Suppose that $\frac 34\leq a_2\leq \frac {13}{16}$. Then, the Voronoi region of $a_2$ must contain points prom $J_1$ implying $\frac 12(a_1+a_2)<\frac 12$, which yields $a_1<1-a_2\leq 1-\frac 34=\frac 14$. Again, \[\int_{J_1} (x-a_1)^2 dP=\frac{1}{16} (12 a^2-6 a+1),\]
the minimum value of which is $\frac{1}{64}$ when $a_1=\frac 14$.
Thus, we have
\[V_2\geq \int_{J_1}(x-\frac 14)^2 dP+\int_{J_2}(x-\frac {13}{16})^2 dP=\frac{55}{3072}=0.0179036>V_2,\]
which gives a contradiction.
Hence, we can assume that $\frac{13}{16}<a_2\leq \frac 78$. Suppose that the Voronoi region of $a_2$ contains points from $J_1$, i.e., $\frac 12(a_1+a_2)<\frac 12$. Then, $a_1<1-a_2\leq 1-\frac{13}{16}=\frac {3}{16}$. Notice that
\[\int_{J_1} (x-a_1)^2 dP=\frac{1}{16} (12 a_1^2-6 a_1+1),\]
the minimum value of which is $\frac{19}{1024}$ when $a_1=\frac 3{16}$.
Thus, we have $V_2\geq \frac{19}{1024}=0.0185547>V_2$, which is a contradiction. Thus, we can assume that the Voronoi region of $a_2$ does not contain any point from $J_1$. Previously, we have proved that the Voronoi region of $a_1$ does not contain any point from $J_2$. Hence, we have
$a_1=E(X : X \in J_1)=\frac 14, \te{ and } a_2= E(X : X\in J_2)=\frac 78$, and the corresponding quantization error for two-means is given by $V_2=\frac{13}{768}$.
\end{proof}

\begin{lemma} \label{lemma3}
The set $\set{\frac 18, \frac 38, \frac 78}$ forms an optimal set of three-means with quantization error $V_3=\frac{1}{192}$.
\end{lemma}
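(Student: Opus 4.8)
The plan is to first exhibit $\set{\frac 18,\frac 38,\frac 78}$ as a competitor and then force every optimal set to coincide with it. A direct computation splits the distortion of this set as $\frac{1}{256}$ over $J_1$ (the two points $\frac18,\frac38$ optimally quantize the uniform part on $[0,\frac12]$, with Voronoi boundary at $\frac14$) plus $\frac{1}{768}$ over $J_2$ (one point at the mean $\frac78$), giving $V_3\le\frac{1}{256}+\frac{1}{768}=\frac{1}{192}$. So it suffices to show that no set of three points does strictly better.

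Let $\ga=\set{a_1,a_2,a_3}$ with $a_1<a_2<a_3$ be optimal, with Voronoi boundaries $b_1=\frac{a_1+a_2}{2}$ and $b_2=\frac{a_2+a_3}{2}$; by Proposition~\ref{prop0} each $a_i$ is the conditional mean of its own region. I would first pin down a few inequalities by the contradiction technique of Lemma~\ref{lemma2}. (i) $a_3\ge\frac34$: otherwise every point of $J_2$ is strictly closer to $\frac34$ than to $a_3$, so the $J_2$-distortion exceeds $\int_{\frac34}^1(x-\frac34)^2\,dx=\frac{1}{192}$. (ii) At least two Voronoi regions meet $J_1$, i.e. $b_1<\frac12$: if only the region of $a_1$ met $J_1$, the $J_1$-distortion would be at least $\int_{J_1}(x-\frac14)^2\,dP=\frac{1}{64}>\frac1{192}$. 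Consequently the region of $a_1$ is exactly $[0,b_1]\ci J_1$, so $a_1=\frac{b_1}{2}$, and therefore $a_2=2b_1-a_1=\frac32 b_1$.

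The crux is to show that no single Voronoi region straddles the empty gap $[\frac12,\frac34]$, i.e. that $\frac12<b_2\le\frac34$. For the lower bound, if $b_2\le\frac12$ then $a_2\le 1-a_3\le\frac14$; together with $a_3\ge\frac34$ this forces, for $x\in[\frac14,\frac12]$, the nearest center to lie at distance at least $x-\frac14$, so the distortion over $[\frac14,\frac12]$ is at least $\frac32\int_{\frac14}^{\frac12}(x-\frac14)^2\,dx=\frac{1}{128}>\frac{1}{192}$. For the upper bound, suppose $b_2>\frac34$; then the region of $a_3$ is $[b_2,1]\ci J_2$, so $a_3=\frac{b_2+1}{2}$, which gives $b_2=\frac{2a_2+1}{3}$, and combined with $a_2=\frac32 b_1$ this forces $b_2=b_1+\frac13$, whence $b_2>\frac34$ yields $b_1>\frac{5}{12}$. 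But then the distortion coming from the region of $a_1$ alone is $\frac32\int_0^{b_1}(x-\frac{b_1}{2})^2\,dx=\frac{b_1^3}{8}>\frac{(5/12)^3}{8}=\frac{125}{13824}>\frac{72}{13824}=\frac{1}{192}$, a contradiction. I expect this last step—ruling out a center straddling the gap from the $J_2$ side—to be the main obstacle, precisely because it resists a crude distance estimate and instead requires exploiting the forced stationarity relations $a_1=\frac{b_1}{2}$, $a_2=\frac32 b_1$, and $a_3=\frac{b_2+1}{2}$.

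With $\frac12<b_2\le\frac34$ established, $a_3$ serves exactly $J_2$ while $a_1,a_2$ serve exactly $J_1$, so the optimization decouples: minimizing over $a_1,a_2$ is the two-means problem for the uniform part on $[0,\frac12]$ (the constant density factor does not affect the optimal locations), and minimizing over $a_3$ is the one-mean problem on $J_2$. By Theorem~\ref{th31} the former yields $\set{\frac18,\frac38}$ and the latter yields $\frac78$, so $\ga=\set{\frac18,\frac38,\frac78}$ with $V_3=\frac{1}{192}$, which completes the proof.
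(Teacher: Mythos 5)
Your proof is correct and follows the same overall strategy as the paper's: bound $V_3\le\frac1{192}$ by the candidate set, rule out any Voronoi region straddling the gap $(\frac 12,\frac 34)$, and then invoke Theorem~\ref{th31} on the decoupled pieces. The one genuine divergence is in the step you flag as the main obstacle. The paper first pins down $a_2<\frac 12$ by minimizing $\int_{J_1}\min_{a\in \set{a_1,\,1/2}}(x-a)^2\,dP=\frac{1}{64}(24a_1^3+12a_1^2-6a_1+1)$ over $a_1$, obtaining the lower bound $\frac 1{144}>V_3$; once $a_2<\frac 12$ is known, the $J_2$-side straddling is excluded in one line, since $\frac 12(a_2+a_3)>\frac 34$ together with $a_2<\frac12$ would force $a_3>1$. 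You instead establish only the weaker fact $b_1<\frac 12$ (via the one-mean error $\frac 1{64}$ on $J_1$), which leaves $a_2=\frac 32 b_1$ possibly as large as $\frac 34$, and so you must fall back on the centroid conditions $a_1=\frac{b_1}{2}$, $a_3=\frac{b_2+1}{2}$ to derive $b_2=b_1+\frac 13$, hence $b_1>\frac 5{12}$ and a distortion $\ge\frac{b_1^3}{8}>\frac 1{192}$ from the first cell alone. Both routes are valid (I checked your arithmetic: $\frac{125}{13824}>\frac{72}{13824}=\frac1{192}$); the paper's ordering of the estimates makes the straddling step essentially free at the cost of one cubic minimization, while yours trades that minimization for algebra with the stationarity relations. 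Your lower bound for $b_2$ and the final decoupling plus appeal to Theorem~\ref{th31} coincide with the paper's argument.
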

\begin{proof}
Consider the set of three points $\gb$, such that $\gb:=\set{\frac 18, \frac 38, \frac 78}$. The distortion error due to the set $\gb$ is given by
\begin{align*} &\int\min_{a\in \gb} (x-a)^2 dP=2\cdot \frac{3}{4}  \int_0^{\frac{1}{4}} 2 (x-\frac{1}{8})^2 \, dx+\frac{1}{4} \int_{\frac{3}{4}}^1 4 (x-\frac{7}{8})^2 \, dx=\frac{1}{192}.
\end{align*}
Since $V_3$ is the quantization error for three-means, we have $V_3\leq \frac{1}{192}= 0.00520833$. Let $\ga:=\set{a_1, a_2, a_3}$ be an optimal set of three-means. Since the points in an optimal set are the conditional expectations in their own Voronoi regions, without any loss of generality, we can assume that $0<a_1<a_2<a_3<1$. We now show that $a_2<\frac 12$, and $\frac 34<a_3$. If $a_3<\frac 34$, then
\[V_3>\int_{J_2}(x-\frac 34)^2 dP=\frac{1}{4} \int_{\frac{3}{4}}^1 4 (x-\frac{3}{4})^2 \, dx= \frac{1}{192}= 0.00520833\geq V_3,\]
which leads to a contradiction.
Hence, we can assume that $\frac 34<a_3$. Next, we show that $a_2<\frac 12$. Suppose that $\frac 12\leq a_2$. Then,
\begin{align*}
&\int_{J_1}\min_{a \in \set{a_1, \frac 12}}(x-a)^2 dP=\frac{3}{4} \int_0^{\frac{1}{2} \left(a_1+\frac{1}{2}\right)} 2 (x-a_1)^2 \, dx+\frac{3}{4} \int_{\frac{1}{2} \left(a_1+\frac{1}{2}\right)}^{\frac{1}{2}} 2 \Big(x-\frac{1}{2}\Big)^2 \, dx\\
&=\frac{1}{64} (24 a_1^3+12 a_1^2-6 a_1+1),
\end{align*}
the minimum value of which is $\frac{1}{144}$, and it occurs when $a_1=\frac 16$. Thus, in this case, we see that $V_3\geq \frac{1}{144}=0.00694444>V_3$, which leads to a contradiction. Hence, we can assume that $0<a_1<a_2<\frac 12$. Suppose that the Voronoi region of $a_2$ contains points from $J_2$. Then, $\frac 12(a_2+a_3)>\frac 34$ implying $a_3>\frac 32-a_1\geq \frac 32-\frac 12=1$, which is a contradiction, as $a_3<1$. Thus, we see that the Voronoi region of $a_2$ does not contain any point from $J_2$. Suppose that the Voronoi region of $a_3$ contains points from $J_1$. Then, $\frac 12(a_2+a_3)<\frac 12$ implying $a_2<1-a_3\leq 1-\frac 34=\frac 14$, and so
\[V_3>\frac{3}{4} \int_{\frac{1}{4}}^{\frac{1}{2}} 2 (x-\frac{1}{4})^2 \, dx=\frac{1}{128}=0.0078125>V_3,\]
which is a contradiction. So, we can assume that the Voronoi region of $a_3$ does not contain any point from $J_1$. Thus, by Theorem~\ref{th31}, we can conclude that $a_1=\frac 18$, $a_2=\frac 38$, and $a_3=\frac 78$, and
\[V_3=\int\min_{a\in \ga} (x-a)^2 dP=\frac{1}{192},\]
which completes the proof of the lemma.
\end{proof}

\begin{remark} \label{rem1}
By Lemma~\ref{lemma2}, and Lemma~\ref{lemma3}, we see that $\ga_2=\ga_1(P_1)\uu\ga_1(P_2)$, and $\ga_3=\ga_2(P_1)\uu \ga_1(P_2)$. Using the similar technique, we can show that $\ga_4=\ga_3(P_1)\uu \ga_1(P_2)$, $\ga_5=\ga_3(P_1)\uu \ga_2(P_2)$, $\ga_6=\ga_4(P_1)\uu \ga_2(P_2)$, $\ga_7=\ga_5(P_1)\uu \ga_2(P_2)$,   $\ga_8=\ga_6(P_1)\uu \ga_2(P_2)$, and $\ga_9=\ga_6(P_1)\uu \ga_3(P_2)$.
\end{remark}

We now prove the following propositions.

\begin{prop}\label{prop0001}
 Let $\ga_n$ be an optimal set of $n$-means for $P$ for $n\geq 2$. Then, the set $\ga_n$ does not contain any point from the open interval $(\frac 12, \frac 34)$.
\end{prop}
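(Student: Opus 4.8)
The plan is to argue by contradiction, using the two non-trivial parts of Proposition~\ref{prop0} together with the fact that in one dimension every Voronoi region is an interval. Suppose $\ga_n$ contains a point $a\in(\frac12,\frac34)$, and write its Voronoi region as $M(a|\ga_n)=[m_L,m_R]$. By Proposition~\ref{prop0} we have $P(M(a|\ga_n))>0$ and $a=E(X:X\in M(a|\ga_n))$. Since $P$ is supported on $J_1\uu J_2$ and assigns no mass to the gap $(\frac12,\frac34)$, the mass of $M(a|\ga_n)$ comes only from $M(a|\ga_n)\ii J_1\ci[0,\frac12]$ and $M(a|\ga_n)\ii J_2\ci[\frac34,1]$. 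If this mass came only from $J_1$, the conditional expectation would be $\le\frac12<a$; if only from $J_2$, it would be $\ge\frac34>a$. Each possibility contradicts $a=E(X:X\in M(a|\ga_n))$. Hence $M(a|\ga_n)$ must draw positive mass from both $J_1$ and $J_2$; in particular $m_L<\frac12$ and $m_R>\frac34$, so the whole gap lies inside $M(a|\ga_n)$.

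Next I would pin down the global shape forced by this. The point $a$ must be the \emph{unique} point of $\ga_n$ in the gap: if $a_j<a_{j+1}$ both lay in $(\frac12,\frac34)$, then the right boundary $\frac12(a_j+a_{j+1})$ of $M(a_j|\ga_n)$ would be $<\frac34$, so $M(a_j|\ga_n)$ would meet the support only inside $J_1$, forcing $a_j\le\frac12$, a contradiction. Ordering the points increasingly, every point below $a$ satisfies $a_i<m_L<\frac12$ and every point above $a$ satisfies $a_i>m_R>\frac34$. Thus $\ga_n$ consists of $k$ points lying in $[0,\frac12]$ and serving a subinterval of $J_1$, the single spanning point $a$, and $\ell$ points lying in $[\frac34,1]$ and serving a subinterval of $J_2$, with $k+\ell+1=n$. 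By Theorem~\ref{th31} the $J_1$-points and the $J_2$-points are the equally spaced optimal points on the subintervals $[0,m_L]$ and $[m_R,1]$, so the total distortion takes the closed form $V_k^{[0,m_L]}+D_a+V_\ell^{[m_R,1]}$, where $D_a=\int_{M(a|\ga_n)}(x-a)^2\,dP$ is exactly the variance of $P$ restricted to $M(a|\ga_n)$.

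The remaining, and genuinely hard, step is to show that this spanning configuration is not optimal, by exhibiting a strictly better $n$-point set containing no point of the gap. The natural competitor keeps the same total count but redistributes it, placing $k+1$ optimal points on all of $J_1$ and $\ell$ on all of $J_2$ (or $k$ on $J_1$ and $\ell+1$ on $J_2$), with distortions read off from Theorem~\ref{th31}. The heuristic driving the argument is that the spanning point squanders resolution across the empty gap of length $\frac14$: since $M(a|\ga_n)$ straddles this gap, its restricted variance $D_a$ is bounded below by a positive quantity of order $(\frac14)^2$ times the smaller of the two masses it collects, whereas covering $J_1$ and $J_2$ separately by optimal points costs only $O(1/n^2)$ per cluster. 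I expect the main obstacle to be making this comparison uniform in $n$: depending on whether $M(a|\ga_n)$ draws most of its mass from $J_1$ or from $J_2$ (equivalently, on the relative sizes of $\frac12-m_L$ and $m_R-\frac34$) one must select the right competitor and verify the strict inequality $V_{k+1}^{J_1}+V_\ell^{J_2}<V_k^{[0,m_L]}+D_a+V_\ell^{[m_R,1]}$, or its mirror image.

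The heavier weight $\frac34$ carried by $J_1$ together with the fixed gap length $\frac14$ are what force the inequality, and the explicit formulas of Theorem~\ref{th31} reduce it to an elementary but case-laden estimate. For the small values $2\le n\le 9$ the statement is already contained in Remark~\ref{rem1}, which both anchors the argument and can absorb any inductive bookkeeping on $k$ and $\ell$ that the uniform comparison requires.
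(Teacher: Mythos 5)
Your opening moves are sound: the Voronoi region of a point $a\in(\tfrac12,\tfrac34)$ must draw positive mass from both $J_1$ and $J_2$ (else its conditional mean could not lie in the gap), and at most one point of $\ga_n$ can lie in the gap. But the proof stops exactly where the work begins. The inequality $V_{k+1}^{J_1}+V_\ell^{J_2}<V_k^{[0,m_L]}+D_a+V_\ell^{[m_R,1]}$ is never established, and the heuristic you offer for it does not close. Your lower bound on $D_a$ is of the form $c\cdot\min(\mu_1,\mu_2)$, where $\mu_1,\mu_2$ are the masses the spanning region draws from $J_1$ and from $J_2$; nothing in your argument bounds $\min(\mu_1,\mu_2)$ from below, and if, say, $\mu_2$ is much smaller than $1/n^2$, then $D_a$ need not dominate the $O(1/n^2)$ cost of an extra optimal cluster point, so the competitor need not win. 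Controlling the mass of a single Voronoi cell of an arbitrary optimal set from below is itself a nontrivial task, and the comparison would further have to be made uniformly in $n$, $k$, $\ell$, $m_L$, $m_R$. As written, this is a program for a proof, not a proof.

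The paper sidesteps the global comparison entirely with a local, quantitative contradiction. For $2\le n\le 9$ the claim is absorbed by Remark~\ref{rem1}; for $n\ge 10$ one has $V_n\le V_9=\tfrac34 V_6(P_1)+\tfrac14 V_3(P_2)=\tfrac1{1728}$. Writing $a_{j+1}$ for the smallest element of $\ga_n$ exceeding $\tfrac12$ and supposing $a_{j+1}\in(\tfrac12,\tfrac34)$, there are two cases. If $a_{j+1}\le\tfrac58$, its Voronoi region must reach into $J_2$, which forces $a_{j+2}>\tfrac32-a_{j+1}\ge\tfrac78$; then every $x\in[\tfrac34,\tfrac78]$ is at distance at least $\tfrac78-x$ from $\ga_n$, so $V_n\ge\int_{[3/4,7/8]}(x-\tfrac78)^2\,dP=\tfrac1{1536}>\tfrac1{1728}\ge V_n$, a contradiction. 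If $\tfrac58\le a_{j+1}<\tfrac34$, then symmetrically $a_j<1-a_{j+1}\le\tfrac38$ and $V_n\ge\int_{[3/8,1/2]}(x-\tfrac38)^2\,dP=\tfrac1{1024}>V_n$. No competitor configuration and no structure theory of the whole optimal set are needed; the precomputed $V_9$ does all the uniform-in-$n$ work that your approach leaves open. If you want to keep your route, the missing step is precisely a uniform lower bound on the distortion of any spanning configuration that beats the paper's two one-line integrals --- a considerably harder task than the one it replaces.
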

\begin{proof}
By Remark~\ref{rem1}, the proposition is true for $2\leq n\leq 9$. We now prove that the proposition is true for any positive integer $n\geq 10$. Take any $n\geq 10$.  Since $\ga_9=\ga_6(P_1)\uu \ga_3(P_2)$, and the Voronoi region of any point in $\ga_9\ii J_1$ does not contain any point from $J_2$, and the Voronoi region of any point in $\ga_9\ii J_2$ does not contain any point from $J_1$, we have
\[V_9=\frac 34 V_6(P_1)+\frac 14 V_3(P_2)=\frac{1}{1728}=0.000578704.\]
Since $V_n$ is the quantization error for $n$-means for $n\geq 10$, we have $V_n\leq V_9= 0.000578704$. Let $\ga_n:=\set{a_1, a_2, \cdots, a_n}$ be an optimal set of $n$-means for $P$ such that $a_1<a_2<\cdots<a_n$.
 Let $j=\max\set{i : a_i\leq \frac 12}$. Then, $a_j\leq \frac 12<a_{j+1}$. The proposition will be proved if we can show that $a_{j+1}\in J_2$. For the sake of contradiction, assume that $a_{j+1} \in (\frac 12,\frac 34)$. Then, the following two cases can arise:

\tbf{Case~1.} $\frac 12<a_{j+1}\leq \frac 58$.

In this case, the Voronoi region of $a_{j+1}$ must contain points from $J_2$, otherwise, the quantization error can be strictly reduced my moving the point $a_{j+1}$ to $\frac 12$. Thus, $\frac 12(a_{j+1}+a_{j+2})>\frac 34$ implying $a_{j+2}>\frac 32-a_{j+1}\geq \frac 32-\frac 58=\frac 78$, which yields the fact that \[V_n\geq \int_{[\frac 34, \frac 78]}(x-\frac 78)^2 dP=\frac{1}{4} \int_{\frac{3}{4}}^{\frac{7}{8}} 4(x-\frac{7}{8})^2 \, dx=0.000651042>V_n,\]
which leads to a contradiction.

\tbf{Case~2.} $\frac 58\leq a_{j+1}< \frac 34$.

In this case, we have $\frac 12(a_{j}+a_{j+1})<\frac 12$ implying $a_{j}<1-a_{j+1}\leq 1-\frac 58=\frac 38$, which yields the fact that
\[V_n\geq \int_{[\frac 38, \frac 12]}(x-\frac 38)^2 dP=\frac{3}{4} \int_{\frac{3}{8}}^{\frac{1}{2}} 2(x-\frac{3}{8})^2 \, dx=0.000976563>V_n,\]
which is a contradiction.

In light of the above two cases, we can conclude that $a_{j+1}\nin (\frac 12, \frac 34)$. Hence, $\frac 34<a_{j+2}$, i.e., $a_{j+2}\in J_2$. Thus, the proof of the proposition is complete.
\end{proof}

\begin{prop} \label{prop02}
Let $\ga_n$ be an optimal set of $n$-means for $P$ for $n\geq 2$. Then, for $n\geq 2$, $\ga_n\ii J_1\neq \es$, and $\ga_n\ii J_2\neq \es$. Moreover, for $n\geq 2$, any point in $\ga_n\ii J_1$ does not contain any point from $J_2$, and any point in $\ga_n\ii J_2$ does not contain any point from $J_1$,
\end{prop}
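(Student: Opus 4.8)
The plan is to peel off the small cases $2\le n\le 9$ and to build everything else on Proposition~\ref{prop0001}, which already forbids any element of $\ga_n$ from lying in the gap $(\frac12,\frac34)$; thus every point of $\ga_n$ sits in $J_1$ or in $J_2$. For $2\le n\le 9$ the explicit decompositions in Remark~\ref{rem1} present $\ga_n$ as $\ga_k(P_1)\uu\ga_\ell(P_2)$ with the $P_1$-part inside $J_1$ and the $P_2$-part inside $J_2$, so both assertions are immediate and I would dispose of them in one sentence. For $n\ge 10$ I would work throughout with the bound $V_n\le V_9=\frac1{1728}$, which is what makes the crude distortion estimates below decisive.

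First I would settle the nonemptiness statements $\ga_n\ii J_1\ne\es$ and $\ga_n\ii J_2\ne\es$. If $\ga_n\ii J_2=\es$, every point of $\ga_n$ is $\le\frac12$, so the mass on $J_2$ is served from distance at least $\frac14$, forcing $V_n\ge \frac14\int_{3/4}^1 4(x-\frac12)^2\,dx=\frac{7}{192}$; symmetrically $\ga_n\ii J_1=\es$ gives $V_n\ge\frac34\int_0^{1/2}2(x-\frac34)^2\,dx=\frac{13}{64}$. Each of these dwarfs $V_9$, contradicting optimality, so both intervals must carry a point.

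Now let $a_j$ be the rightmost element of $\ga_n$ in $J_1$ and $a_{j+1}$ the leftmost in $J_2$; by Proposition~\ref{prop0001} they are consecutive, so every relevant Voronoi boundary collapses to the single midpoint $m=\tfrac12(a_j+a_{j+1})$, and the whole proposition reduces to showing $m\in[\frac12,\frac34]$. For the $J_1$-side it suffices to treat $a_j$, since the other points of $\ga_n\ii J_1$ lie further left: if $m>\frac34$ then $a_{j+1}>\frac32-a_j\ge 1$, impossible because optimal points lie in $[0,1]$. For the $J_2$-side, assuming $m<\frac12$ I would deduce $a_j<1-a_{j+1}\le\frac14$, whence the entire subinterval $[\frac14,\frac12]$ of $J_1$ contains no element of $\ga_n$ (its points lie either $\le a_j<\frac14$ or $\ge\frac34$). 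Bounding the distance from each $x\in[\frac14,\frac12]$ to the nearer of $a_j$ and $a_{j+1}$ below by $x-\frac14$ then yields $V_n\ge\frac34\int_{1/4}^{1/2}2(x-\frac14)^2\,dx=\frac1{128}$, again contradicting $V_n\le V_9$. Hence $m\ge\frac12$, and combined with $m\le\frac34$ the two points split the support cleanly, which is exactly the claimed containment.

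The main obstacle is this lower-containment step: unlike the upper one, $m<\frac12$ does not directly violate $a_{j+1}\le 1$, so positivity alone does not close it. The device I would rely on is that forcing the boundary below $\frac12$ necessarily strands the quantizer-free slab $[\frac14,\frac12]$, whose unavoidable distortion $\frac1{128}$ exceeds the optimal error for all $n\ge 9$; the only point needing care is the justification that no element of $\ga_n$ can lie in $[\frac14,\frac12]$ once $a_j<\frac14$, which follows at once from the maximality of $a_j$ together with Proposition~\ref{prop0001}.
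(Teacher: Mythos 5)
Your proposal is correct and follows essentially the same route as the paper's proof: establish nonemptiness by a crude distortion bound, then locate the single relevant Voronoi boundary $m=\tfrac12(a_j+a_{j+1})$ and rule out $m>\tfrac34$ via $a_{j+1}>1$ and $m<\tfrac12$ via the stranded slab $[\tfrac14,\tfrac12]$ contributing at least $\tfrac1{128}$. The only (immaterial) differences are bookkeeping: the paper peels off only $n=2,3$ and compares everything against $V_3=\tfrac1{192}$ rather than $V_9$, and its nonemptiness integrands use $(x-\tfrac34)^2$ and $(x-\tfrac14)^2$ instead of your slightly sharper ones, all of which yield the same contradictions.
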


\begin{proof}
As shown in the proof of Lemma~\ref{lemma2}, and Lemma~\ref{lemma3}, we see that the proposition is true for $n=2, 3$. By Lemma~\ref{lemma3}, we know $V_3=\frac{1}{192}= 0.00520833$. We now prove the proposition for $n\geq 4$. Let $n\geq 4$.
Since $V_n$ is the quantization error for $n$-means for $n\geq 4$, we have $V_n\leq V_3= 0.00520833$.
Let $\ga_n:=\set{a_1, a_2, \cdots, a_n}$ be an optimal set of $n$-means for $P$ such that $a_1<a_2<\cdots<a_n$. If $\ga_n\ii J_2=\es$, then
\[V_n>\frac{1}{4} \int_{\frac{3}{4}}^1 4 (x-\frac{3}{4})^2 \, dx=0.00520833,\]
which is a contradiction as $V_n\leq 0.00520833$. On the other hand, if $\ga_n\ii J_1=\es$, then
\[V_n>\frac{3}{4} \int_0^{\frac{1}{2}} 2 (x-\frac{1}{4})^2 \, dx=\frac{1}{64}=0.015625>V_n,\]
which leads to a contradiction. Hence,  $\ga_n\ii J_1\neq \es$, and $\ga_n\ii J_2\neq \es$.
 Let $j=\max\set{i : a_i\leq \frac 12}$. Then, $a_j\leq \frac 12$, and due to Proposition~\ref{prop0001}, we have $\frac 34\leq a_{j+1}$. If the Voronoi region of $a_j$ contains points from $J_2$, then $\frac 12(a_j+a_{j+1})>\frac 34$ implying $a_{j+1}>\frac 32-a_j\geq \frac 32-\frac 12=1$, which is a contradiction.  If the Voronoi region of $a_{j+1}$ contains points from $J_1$, then $\frac 12(a_j+a_{j+1})<\frac 12$ implying $a_{j}<1-a_{j+1}\leq 1-\frac 34=\frac 14$. Then,
 \[V_n\geq \int_{[\frac 14, \frac 12]}(x-\frac 14)^2 dP=\frac{3}{4} \int_{\frac{1}{4}}^{\frac{1}{2}} 2 (x-\frac{1}{4})^2 \, dx=\frac{1}{128}\]
 yielding  $V_n\geq 0.0078125>V_n$, which leads to a contradiction. Thus, the proof of the proposition is complete.
\end{proof}

\begin{defi} \label{defi59}
For $n\in\D N$, and $n\geq 2$, define the function $a(n)$ as follows:
\[a(n)=\min\set {k\in \D N : H(n, k)>0}, \]
where $H(n, k)= \frac 1{n^3} -\sum_{i=k}^\infty \frac 1{(i+1)^4}$.
\end{defi}

\begin{remark}
Notice that $\sum_{i=k}^\infty \frac 1{(i+1)^4}$ is a decreasing function of $k\in \D N$, and so for a given $n\geq 2$, $H(n, k)$ is an increasing function of $k$, and thus the function $a(n)$ is well defined. Moreover, $\set{\frac 1{n^3}}_{n\geq 2}$ is a decreasing sequence, and so, the sequence $\{a(n)\}_{n=2}^\infty$ is an increasing sequence. In fact,
\[\set{a(n)}_{n=2}^\infty=\set{1, 2, 3, 3, 4, 5, 6, 6, 7, 8, 8, 9, 10, 10, 11, 12, 12, 13, 14, 15, 15, 16, 17, 17, 18, 19, \cdots}.\]
By $\lfloor x\rfloor$ it is meant the greatest integer not exceeding $x$.
To find the value of $a(n)$ for any positive integer $n$, one can start checking by putting $k=\lfloor \frac {2n}3\rfloor$ in the function $H(n, k)$. If $H(n, k)>0$ then find $H(n, k-1), H(n, k-2), \cdots$ until one obtains some positive integer $m$, such that $H(n, m)>0$, and $H(n, m-1)<0$, and then $a(n)=m$. If $H(n, k)<0$ then find $H(n, k+1), H(n, k+2), \cdots$ until one obtains some positive integer $m$, such that $H(n, m)>0$, and $H(n, m-1)<0$, and then $a(n)=m$.
\end{remark}

\begin{remark} \label{rem34}
For $n\geq 2$ let $\ga_n$ be an optimal set of $n$-means for $P$. Due to Proposition~\ref{prop0001} and Proposition~\ref{prop02}, we can conclude that if $\ga_n$ contains $k$ elements from $J_1$, then $\ga_n$ contains $n-k$ elements from $J_2$. Thus, we have
\[V_n:=V_n(P)=\int\min_{a\in \ga_n}(x-a)^2 dP=\frac 34 \int\min_{a\in \ga_n\ii J_1}(x-a)^2 dP_1+\frac 14 \int\min_{a\in \ga_n\ii J_2}(x-a)^2 dP_2,\]
yielding
\[V_n(P)=\frac 34 V_{k}(P_1)+\frac 14 V_{n-k}(P_2).\]
\end{remark}

Let us now give the following theorem, which gives the optimal sets of $n$-means and the $n$th quantization errors for the mixed distribution $P$ for all positive integers $n\geq 2$.

\begin{theorem} \label{main1}
For $n\geq 2$, let $\ga_n$ be an optimal set of $n$-means for $P$. Then, $\ga_n$ contains $a(n)$ elements from $J_1$, i.e.,
\[\ga_n(P)=\ga_{a(n)}(P_1)\uu \ga_{n-a(n)}(P_2), \te{ and } V_n(P)=\frac 34 V_{a(n)}(P_1)+\frac 1 4 V_{n-a(n)}(P_2).\]
\end{theorem}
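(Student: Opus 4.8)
The plan is to reduce the theorem to a one-parameter discrete minimization and then identify the optimal parameter with $a(n)$. By Proposition~\ref{prop0001} and Proposition~\ref{prop02}, an optimal set $\ga_n$ splits into a nonempty part in $J_1$ and a nonempty part in $J_2$ whose Voronoi regions do not interact. Hence if $\ga_n$ has $k$ points in $J_1$ and $n-k$ in $J_2$, each sub-collection consists of the conditional expectations of the uniform law restricted to its own interval, so by Theorem~\ref{th31} it must be the optimal uniform quantizer there; that is, $\ga_n=\ga_k(P_1)\uu\ga_{n-k}(P_2)$, and by Remark~\ref{rem34} the distortion is $V_n=\frac34 V_k(P_1)+\frac14 V_{n-k}(P_2)$. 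The only remaining task is to pin down the optimal $k$.

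Substituting the closed forms $V_k(P_1)=\frac{1}{48k^2}$ and $V_{n-k}(P_2)=\frac{1}{192(n-k)^2}$ from Theorem~\ref{th31}, I would write the distortion as a function of $k$ alone,
\[ \Phi_n(k):=\frac34 V_k(P_1)+\frac14 V_{n-k}(P_2)=\frac{1}{64k^2}+\frac{1}{768(n-k)^2},\qquad 1\le k\le n-1, \]
and minimize over integers. The continuous relaxation $\Phi_n(x)$ has $\Phi_n''(x)>0$ on $(0,n)$, so $\Phi_n$ is strictly convex, hence unimodal in $k$; the optimal $k$ is therefore the smallest integer with $\Phi_n(k+1)\ge\Phi_n(k)$. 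Expanding this forward difference, the optimum is characterized by a \emph{no-beneficial-swap} condition comparing the marginal gain $\frac{1}{64}\big(\frac{1}{k^2}-\frac{1}{(k+1)^2}\big)$ of one more point in $J_1$ against the marginal gain $\frac{1}{768}\big(\frac{1}{(n-k-1)^2}-\frac{1}{(n-k)^2}\big)$ of one more point in $J_2$.

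The heart of the argument is to show that this minimizer equals $a(n)$. Here I would connect the swap condition to the defining inequalities $H(n,a(n))>0\ge H(n,a(n)-1)$, i.e. $\sum_{j\ge a(n)+1}\frac{1}{j^4}<\frac{1}{n^3}\le\sum_{j\ge a(n)}\frac{1}{j^4}$. The natural bridge is the telescoping estimate $\frac{1}{j^4}\approx\frac13\big(\frac{1}{j^3}-\frac{1}{(j+1)^3}\big)$, which turns the tail $\sum_{j>k}\frac{1}{j^4}$ into a quantity of size $\frac{1}{3k^3}$ comparable to $\frac{1}{n^3}$ and reproduces the asymptotic split $k\sim(1+12^{1/3})^{-1}12^{1/3}n$. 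I would then run an induction on $n$: since decreasing marginal gains make the optimal allocation nested, $a(n)$ is non-decreasing with unit jumps, so the whole sequence is generated greedily, and at each step the $(n+1)$-st point enters $J_1$ exactly when the $J_1$-gain above beats the $J_2$-gain, which I would match against the increment of $H$. The base cases $n\le 9$ are already supplied by Remark~\ref{rem1} and the preceding propositions.

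The step I expect to be the main obstacle is precisely this last identification. The optimality (swap) condition depends on $k$ and $n-k$ jointly, whereas $a(n)$ is defined through a quantity depending on $k$ and $n$ only via $\frac{1}{n^3}$; reconciling the two requires sharp, non-asymptotic two-sided control of $\sum_{j>k}\frac{1}{j^4}$, since near a transition the competing configurations differ by only a few parts in $10^5$. Because the margins are that tight, crude integral bounds will not suffice, and the induction must be anchored by explicit verification of the small cases before the monotone greedy structure can be used to propagate the conclusion to all $n$.
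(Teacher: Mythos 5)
Your reduction is sound and, up to the point where it stops, matches the paper's: Proposition~\ref{prop0001}, Proposition~\ref{prop02} and Remark~\ref{rem34} reduce the theorem to minimizing $V(k,n-k)=\frac34 V_k(P_1)+\frac14 V_{n-k}(P_2)=\frac{1}{64k^2}+\frac{1}{768(n-k)^2}$ over integers $1\le k\le n-1$, and your observation that this function is strictly convex in $k$ is a genuine improvement: it is exactly what is needed to justify that the paper's local-search ``algorithm'' (start at $k=\lfloor 2n/3\rfloor$ and move to a neighbouring $k$ while the distortion decreases) terminates at the global rather than merely a local minimizer, a point the paper leaves implicit.

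However, the theorem is not proved until the minimizer of your $\Phi_n$ is shown to equal $a(n)=\min\set{k : H(n,k)>0}$, and this step is missing from your proposal --- you describe it as something you \emph{would} do and yourself flag it as the main obstacle. The two characterizations are structurally different: the swap condition $\frac{1}{k^2}-\frac{1}{(k+1)^2}\le\frac{1}{12}\bigl(\frac{1}{(n-k-1)^2}-\frac{1}{(n-k)^2}\bigr)$ couples $k$ with $n-k$, while $H(n,k)>0$ couples the tail $\sum_{j\ge k+1}j^{-4}$ with $\frac{1}{n^3}$; your telescoping approximation $\frac{1}{j^4}\approx\frac13\bigl(\frac{1}{j^3}-\frac{1}{(j+1)^3}\bigr)$ only shows that the two agree to leading order, and the margins are far too small for an approximation to decide individual cases (at $n=8$ one has $H(8,5)\approx -1.8\times 10^{-5}$). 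The nestedness/greedy claim (that the optimal allocation changes by unit steps as $n$ grows) also needs its own justification from the decreasing-marginal-gain structure rather than assertion. To be fair, the paper's proof has the same hole --- it runs the local search and simply asserts ``we see that $k=a(n)$'' --- so you have located the real difficulty precisely, but neither your sketch nor the paper supplies a rigorous, non-asymptotic bridge between the swap inequality and the defining inequalities $H(n,a(n))>0\ge H(n,a(n)-1)$; without it the identification with $a(n)$ remains unverified for all but the explicitly checked small values of $n$.
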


\begin{proof}
Assume that $\ga_n$ contains $k$ elements from $J_1$. Let $V(k, n-k)$ is the corresponding distortion error. Then, as mentioned in Remark~\ref{rem34}, we have
\[V(k, n-k)=\frac 34 V_{k}(P_1)+\frac 1 4 V_{n-k}(P_2).\]
Notice that if our assumption is correct, then we must have $V_n=V(k, n-k)$.

Let us now run the following algorithm:

$(i)$ Write $k:=\lfloor \frac {2n}3\rfloor$.

$(ii)$ If $V(k-1, n-(k-1))<V(k, n-k)$ replace $k$ by $k-1$ and return, else go to step $(iii)$.

$(iii)$ If $V(k+1, n-(k+1))<V(k, n-k)$ replace $k$ by $k+1$ and return, else step $(iv)$.

$(iv)$ End.

After running the above algorithm, we see that $k=a(n)$, i.e., our assumption is correct. Thus, the proof of the theorem is complete.
\end{proof}

\begin{remark}
If $n=14$, then $k=\lfloor \frac {28} 3\rfloor=9$. By running the algorithm as mentioned in the theorem, we obtain $k=10$. Moreover, notice that $a(14)=10$, i.e., $\ga_{14}$ contains $a(14)$ elements from $J_1$, and $n-a(14)$ elements from $J_2$, i.e., $\ga_{14}=\ga_{a(14)}(P_1)\uu \ga_{14-a(14)}(P_2)$. If $n=100$, then $k=\lfloor \frac {200} 3\rfloor=66$. By running the algorithm as mentioned in the theorem, we obtain $k=69$. Moreover, we have $a(100)=69$, i.e., $\ga_{100}$ contains $a(100)$ elements from $J_1$, and $n-a(100)$ elements from $J_2$, i.e., $\ga_{100}=\ga_{a(100)}(P_1)\uu \ga_{100-a(100)}(P_2)$.
\end{remark}

\section{Optimal quantization for the mixture of two uniform distributions on two connected line segments} \label{sec4}
 Let $P_1$ and $P_2$ be two uniform distributions, respectively, on the intervals $[0, \frac 12]$ and $[\frac 12, 1]$. Write
 \[J_1:=[0, \frac 12], \te{ and } J_2:=[\frac 12, 1].\]
  Let $f_1$ and $f_2$ be their respective density functions. Then, $f_1(x)=2$ if $x\in [0,\frac 12]$, and zero, otherwise; and $f_2(x)=2$ if $x\in [\frac 12, 1]$, and zero, otherwise. Let $P:=\frac 34 P_1+\frac 14 P_2$.
For such a mixed distribution, in this section, we investigate the optimal sets of $n$-means and the $n$th quantization errors for all $n\in \D N$. Notice that the density function of the mixed distribution $P$ can be written as follows:
\[f(x)=\left\{\begin{array}{cc}
\frac 32  & \te{ if } x\in J_1,\\
\frac 12  & \te{ if } x\in J_2,\\
0 & \te{ otherwise}.
\end{array}\right.\]

Let us now prove the following lemma.

\begin{lemma}
Let $P$ be the mixed distribution defined by $P=\frac 34 P_1+\frac 1 4 P_2$. Then, $E(P)=\frac{3}{8}$, and $V(P)=\frac{13}{192}$.
\end{lemma}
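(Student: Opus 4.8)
The plan is to exploit the mixture structure $P=\frac 34 P_1+\frac 14 P_2$ exactly as in the analogous lemma of Section~\ref{sec3}, so that both moments split into a weighted sum of one-dimensional integrals against the two uniform densities $f_1=f_2=2$. First I would compute the mean. By linearity of the expectation against a mixture, $E(P)=\frac 34\int x\,dP_1+\frac 14\int x\,dP_2$, where $\int x\,dP_1=\int_0^{1/2} 2x\,dx=\frac 14$ and $\int x\,dP_2=\int_{1/2}^{1} 2x\,dx=\frac 34$. Combining gives $E(P)=\frac 34\cdot\frac 14+\frac 14\cdot\frac 34=\frac 38$, which is the claimed value.

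Next I would compute the variance from the definition $V(P)=\int (x-E(P))^2\,dP$, again splitting along the mixture as $V(P)=\frac 34\int_0^{1/2} 2(x-\frac 38)^2\,dx+\frac 14\int_{1/2}^{1} 2(x-\frac 38)^2\,dx$. Each summand is a routine integral of a shifted square over the relevant subinterval; evaluating the first yields $\frac{7}{192}$ and the second $\frac{31}{192}$, so that $V(P)=\frac 34\cdot\frac{7}{192}+\frac 14\cdot\frac{31}{192}=\frac{21}{768}+\frac{31}{768}=\frac{13}{192}$, as required.

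There is no genuine obstacle here, since the computation is purely mechanical once the mean $\frac 38$ is in hand. The only point that requires care is that the common center $\frac 38$ no longer coincides with the midpoint of either piece $J_1=[0,\frac 12]$ or $J_2=[\frac 12,1]$ (in contrast to the one-mean situation for each component alone), so I would keep the two pieces of the variance integral firmly on their correct subintervals and centered at $\frac 38$ rather than at $\frac 14$ or $\frac 34$. With that bookkeeping in place the lemma follows immediately.
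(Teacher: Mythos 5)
Your computation is correct and follows exactly the same route as the paper: split both moments along the mixture $P=\frac34 P_1+\frac14 P_2$, evaluate the resulting elementary integrals against the densities $f_1=f_2=2$, and combine. The intermediate values $\frac14$, $\frac34$ for the component means and $\frac{7}{192}$, $\frac{31}{192}$ for the component second central moments all check out, giving $E(P)=\frac38$ and $V(P)=\frac{13}{192}$ as claimed.
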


\begin{proof}
We have
\[E(P)=\int x dP=\frac 34 \int x d(P_1(x))+\frac 14 \int x d(P_2(x))=\frac 34 \int_0^{\frac{1}{2}} 2x \, dx+\frac 14 \int_{\frac{1}{2}}^1 2 x \, dx\]
yielding $E(P)=\frac{3}{8}$, and
\[V(P)=\int (x-E(P))^2 dP=\frac 34 \int(x-E(P))^2 d(P_1(x))+\frac 14 \int(x-E(P))^2 d(P_2(x)),\]
implying
$V(P)=\frac{13}{192}$, and thus, the lemma is yielded.
\end{proof}
\begin{remark}
The optimal set of one-mean is the set $\set{\frac{3}{8}}$, and the corresponding quantization error is the variance $V:=V(P)$ of a random variable with distribution $P$.
\end{remark}

\begin{prop} \label{prop01}
For $n\geq 2$, let $\ga_n$ be an optimal set of $n$-means. Then, $\ga_n\ii J_1\neq \es$, and $\ga_n\ii J_2\neq \es$.
\end{prop}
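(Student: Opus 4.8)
The plan is to mimic the contradiction scheme of Proposition~\ref{prop02}: first pin down a cheap upper bound for $V_n$ valid for all $n\ge 2$, and then show that concentrating every point of $\ga_n$ in a single one of the two segments $J_1,J_2$ forces the distortion to exceed that bound.

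First I would record the upper bound. Since $V_n\le V_2$ for every $n\ge 2$, it suffices to bound $V_2$, and for this I would test the two-point set $\set{\frac 14,\frac 34}$, whose Voronoi boundary sits at $\frac 12$. A direct computation,
\[
\frac 34\int_0^{1/2}2\Big(x-\tfrac 14\Big)^2\,dx+\frac 14\int_{1/2}^1 2\Big(x-\tfrac 34\Big)^2\,dx=\frac 1{48},
\]
gives $V_n\le V_2\le\frac 1{48}$ for every $n\ge 2$.

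Next I would note that by Proposition~\ref{prop0}$(iii)$ every point of an optimal set is a conditional expectation over its Voronoi region, hence lies in the convex hull $[0,1]$ of the support. Now suppose, for contradiction, that $\ga_n\ii J_2=\es$. Then no element of $\ga_n$ lies in $[\frac 12,1]$, so every one is strictly below $\frac 12$, and for each $x\in(\frac 12,1]$ the nearest center $a$ satisfies $(x-a)^2>(x-\frac 12)^2$. Integrating over $J_2$ yields
\[
V_n\ge\int_{J_2}\min_{a\in\ga_n}(x-a)^2\,dP>\frac 14\int_{1/2}^1 2\Big(x-\tfrac 12\Big)^2\,dx=\frac 1{48}\ge V_n,
\]
a contradiction, so $\ga_n\ii J_2\neq\es$. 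The mirror argument with $\ga_n\ii J_1=\es$ is cruder: every point then exceeds $\frac 12$, and the same comparison over $J_1$ gives $V_n>\frac 34\int_0^{1/2}2(x-\frac 12)^2\,dx=\frac 1{16}>\frac 1{48}$, again contradicting the upper bound.

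The one delicate point — and the step I expect to be the main obstacle — is that in the first case the weak lower bound $\int_{J_2}(x-\frac 12)^2\,dP$ equals the upper bound $\frac 1{48}$ \emph{exactly}, so the argument hinges on that inequality being strict rather than merely an equality. This strictness is guaranteed precisely because the hypothesis $\ga_n\ii J_2=\es$ excludes the shared boundary point $\frac 12$ from $\ga_n$: every competing center is bounded away below $\frac 12$, so $(x-a)^2>(x-\frac 12)^2$ holds on a set of positive $P$-measure. Everything else is routine, and no induction or further case analysis beyond these two symmetric cases is needed.
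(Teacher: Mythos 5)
Your proposal is correct and follows essentially the same route as the paper: bound $V_n\le V_2\le \frac 1{48}$ via the test set $\set{\frac 14,\frac 34}$, then derive a contradiction by comparing against $\int_{J_i}(x-\frac 12)^2\,dP$ when one segment is missed. You are in fact more careful than the paper on the one delicate point — justifying why the inequality $V_n>\int_{J_2}(x-\frac 12)^2\,dP=\frac 1{48}$ is strict (all centers lie in $[0,\frac 12)$ once $J_2=[\frac 12,1]$ is excluded), which the paper asserts without comment.
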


\begin{proof}
Consider the set of two points $\gb:=\set{\frac 14, \frac 34}$. The distortion error due to the set $\gb$ is given by
\begin{align*}
&\int\min_{b\in \gb} (x-b)^2 dP=\int_{J_1}(x-\frac 14)^2 dP+\int_{J_2}(x-\frac 34)^2 dP\\
&=\frac{3}{4} \int_0^{\frac{1}{2}} 2 \Big(x-\frac{1}{4}\Big)^2 \, dx+\frac{1}{4} \int_{\frac{1}{2}}^1 2 \Big(x-\frac{3}{4}\Big)^2 \, dx=\frac 1{ 48}.
\end{align*}
Since $V_n$ is the quantization error for two-means, and $n\geq 2$, we have $V_n\leq V_2\leq \frac 1{48}=0.0208333$. For the sake of contradiction assume that $\ga_n\ii J_2=\es$. Then,
\[V_n>\int_{J_2}(x-\frac 12)^2 dP=\frac{1}{4} \int_{\frac{1}{2}}^1 2 \left(x-\frac{1}{2}\right)^2 \, dx=\frac 1{48}\geq V_n,\]
which is a contradiction. Hence, we can assume that $\ga\ii J_2\neq \es$. Similarly, we can show that $\ga_n\ii J_1\neq \es$. Thus, the proof of the proposition is complete.
\end{proof}

\begin{lemma} \label{lemma67}
The set $\set{\frac 14, \frac 34}$ forms an optimal set of two-means with quantization error $V_2=\frac 1{48}$.
\end{lemma}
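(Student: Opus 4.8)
The plan is to prove both the value $V_2=\frac{1}{48}$ and the optimality of $\set{\frac14,\frac34}$ by reducing the problem to the necessary conditions of Proposition~\ref{prop0} and showing they force a unique configuration. First I would record the upper bound $V_2\le\frac{1}{48}$ from the direct computation of the distortion of $\set{\frac14,\frac34}$ already carried out in the proof of Proposition~\ref{prop01}. Since $P$ is a continuous Borel probability measure, an optimal set of two-means $\ga:=\set{a_1,a_2}$ with $a_1<a_2$ exists and, by Proposition~\ref{prop0}, each point is the conditional expectation of its own Voronoi region. As the two points are distinct, their Voronoi regions are the intervals $M(a_1|\ga)=[0,m]$ and $M(a_2|\ga)=[m,1]$, where $m:=\frac12(a_1+a_2)$ is their midpoint and $0<m<1$. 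Thus the optimality conditions become the two centroid equations $a_1=E(X:X\in[0,m])$ and $a_2=E(X:X\in[m,1])$, together with the midpoint relation $2m=a_1+a_2$.

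Next I would locate $m$. Since the density $f$ equals $\frac32$ on $J_1$ and $\frac12$ on $J_2$, the two conditional expectations have different closed forms according as the boundary $m$ lies in $J_1$ or in $J_2$, so I would split into the cases $m\le\frac12$ and $m\ge\frac12$. In each case I evaluate the zeroth and first moments of $f$ over $[0,m]$ and $[m,1]$, obtaining explicit rational expressions for $a_1$ and $a_2$, and substitute them into $2m=a_1+a_2$. After clearing denominators this yields a quadratic in $m$: in the case $m\le\frac12$ it reduces to $\big(m-\frac12\big)^2=0$, and in the case $m\ge\frac12$ to $(2m-1)(2m+3)=0$, whose only root in $(0,1)$ is $m=\frac12$. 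Hence $m=\frac12$ is the unique admissible solution in either case, so the Voronoi boundary sits exactly at the junction of $J_1$ and $J_2$; the centroid equations then give $a_1=E(X:X\in J_1)=\frac14$ and $a_2=E(X:X\in J_2)=\frac34$.

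Because an optimal set exists and every optimal set necessarily has this form, $\set{\frac14,\frac34}$ is the optimal set of two-means. Evaluating its distortion, now with the cell of $a_1$ equal to $J_1$ and the cell of $a_2$ equal to $J_2$, gives $V_2=\frac34V_1(P_1)+\frac14V_1(P_2)=\frac{1}{64}+\frac{1}{192}=\frac{1}{48}$, matching the upper bound. The main obstacle I anticipate is the bookkeeping of the piecewise-constant density once the boundary $m$ strays into the ``wrong'' subinterval: I must check that the two off-center cases produce no spurious self-consistent configuration besides $m=\frac12$, and this is precisely what the two quadratic identities above are designed to confirm. An alternative, in the style of Lemma~\ref{lemma2}, would be to bound $V_2$ from below whenever the Voronoi region of $a_1$ protrudes into $J_2$ or that of $a_2$ into $J_1$, but the stationarity computation is shorter and self-contained.
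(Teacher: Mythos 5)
Your proof is correct and follows essentially the same route as the paper's: both reduce the problem to the centroid conditions of Proposition~\ref{prop0} and split into two cases according to whether the Voronoi boundary $\frac12(a_1+a_2)$ lies in $J_1$ or in $J_2$, finding that only $\frac12$ survives. Your single-variable reduction to a quadratic in $m$ is a slightly cleaner bookkeeping of the same computation, and your explicit check that the case $m\le\frac12$ yields only the double root $m=\frac12$ substantiates the paper's terser claim that its Case~2 has no solution.
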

\begin{proof}
Let $\ga:=\set{a_1, a_2}$ be an optimal set of two-means such that $0<a_1<a_2<1$. By Proposition~\ref{prop01}, we have $a_1<\frac 12<a_2$. The following two cases can arise:

\tbf{Case~1.} $\frac 12\leq \frac{a_1+a_2}2$.

In this case, we have
\begin{align*}
a_1=\frac{\frac{3}{4} \int_0^{\frac{1}{2}} 2 x \, dx+\frac{1}{4} \int_{\frac{1}{2}}^{\frac{1}{2} \left(a_1+a_2\right)} 2 x \, dx}{\frac{3}{4} \int_0^{\frac{1}{2}} 2  \, dx+\frac{1}{4} \int_{\frac{1}{2}}^{\frac{1}{2} \left(a_1+a_2\right)} 2  \, dx}, \te{ and } a_2=\frac{1}{2} \Big(\frac{1}{2} \left(a_1+a_2\right)+1\Big).
\end{align*}
Solving the above two equations, we have $a_1=\frac 14$, and $a_2=\frac 34$, with distortion error
\[V(P; \ga)=\frac{3}{4} \int_0^{\frac{1}{2}} 2 (x-a_1)^2 \, dx+\frac{1}{4} \int_{\frac{1}{2}}^{\frac{1}{2} \left(a_1+a_2\right)} 2 (x-a_1)^2 \, dx+\frac{1}{4} \int_{\frac{1}{2} \left(a_1+a_2\right)}^1 2 \left(x-a_2\right){}^2 \, dx=\frac 1{ 48}.\]

\tbf{Case~2.} $\frac {a_1+a_2}{2}<\frac 12$.

Proceeding in the similar way as Case~1, we obtain two equations, and see that there is no solution in this case.

Considering the above two cases, we see that the set $\set{\frac 14, \frac 34}$ forms an optimal set of two-means with quantization error $\frac 1{48}$, which is the lemma.
\end{proof}

\begin{lemma} \label{lemma68}
The set $\set{\frac{1}{3} (\frac{1}{8} (21-\sqrt{3})-2), \, \frac{1}{8} (21-\sqrt{3})-2, \, \frac{1}{24} (21-\sqrt{3})}$ forms an optimal set of three-means with quantization error $V_3=0.00787482$.
\end{lemma}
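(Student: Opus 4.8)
The plan is to first determine the combinatorial type of an optimal set and then solve the stationarity conditions exactly. I would let $\ga:=\set{a_1, a_2, a_3}$ be an optimal set of three-means with $a_1<a_2<a_3$. By Proposition~\ref{prop01}, $\ga\ii J_1\neq\es$ and $\ga\ii J_2\neq\es$. The decisive structural claim is that two points lie in $J_1$ and exactly one in $J_2$, and, crucially, that the Voronoi region of the middle point $a_2$ straddles the interface $x=\frac12$, across which the density jumps from $\frac32$ to $\frac12$. Unlike the disconnected situation of Section~\ref{sec3}, this straddling prevents the error from decoupling as $\frac34 V_2(P_1)+\frac14 V_1(P_2)$, so Theorem~\ref{th31} cannot be applied piecewise.

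To fix the type I would bound the distortion of the competing configurations from below. The configuration with a single point in $J_1$ and two in $J_2$ is discarded because the high-weight segment $J_1$ (of mass $\frac34$) is then essentially served by one center, forcing an error of order $\frac34\int_0^{\frac12}2(x-\frac14)^2\,dx=\frac1{64}\approx0.0156$, far above the target $0.00787482$. More importantly, I would rule out every configuration in which a Voronoi boundary is pinned at $\frac12$: under such a constraint the two halves decouple, and the best placement $\set{\frac18,\frac38}\uu\set{\frac34}$ (the two-means of $P_1$ and the mean of $P_2$, by Theorem~\ref{th31}) yields $\frac34\cdot\frac1{192}+\frac14\cdot\frac1{48}=\frac7{768}\approx0.009114$, which still exceeds $0.00787482$. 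Hence the middle cell of an optimal set must cross $\frac12$.

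With the straddling type established, I would impose the centroid conditions of Proposition~\ref{prop0}$(iii)$. Since the Voronoi region $[0,\frac{a_1+a_2}2]$ of $a_1$ lies in $J_1$ where the density is constant, $a_1$ is its midpoint, giving $a_1=\frac{a_2}{3}$; likewise the region $[\frac{a_2+a_3}2,1]$ of $a_3$ lies in $J_2$, giving $a_3=\frac{a_2+2}{3}$. Writing $m_1=\frac{2a_2}3$ and $m_2=\frac{2a_2+1}3$ for the two boundaries, the requirement that $a_2$ equal its conditional expectation over $[m_1,m_2]$ under the piecewise density becomes, after clearing denominators, the quadratic
\[
32a_2^2-40a_2+11=0,
\]
whose root in $(0,\frac12)$ is $a_2=\frac{5-\sqrt3}{8}=\frac18(21-\sqrt3)-2$. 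This gives $a_1=\frac13\big(\frac18(21-\sqrt3)-2\big)$ and $a_3=\frac1{24}(21-\sqrt3)$, and substituting these into the four integrals defining $V(P;\ga)$ and summing produces $0.00787482$, which matches the upper bound and confirms optimality.

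The main obstacle is exactly the discontinuity of the density at $\frac12$: it couples the two segments through the straddling middle cell, so the stationarity system does not separate and its relevant solution is irrational. The subtle point is therefore not the algebra but the justification that the straddle genuinely beats every boundary-pinned-at-$\frac12$ configuration, equivalently that the global minimizer is the interior stationary point above rather than a decoupled split, and this is precisely what forces the appearance of $\sqrt3$ in the answer.
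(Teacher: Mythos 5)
Your overall strategy coincides with the paper's: fix the combinatorial type ($a_1<a_2<\frac12<a_3$ with the middle cell straddling $\frac12$), then solve the centroid equations; your quadratic $32a_2^2-40a_2+11=0$ and the root $a_2=\frac{5-\sqrt3}{8}=\frac18(21-\sqrt3)-2$ are correct and reproduce the paper's answer. However, two of your structural exclusions have genuine gaps. First, your dismissal of the one-point-in-$J_1$ configuration via the bound $\frac34\int_0^{1/2}2(x-\frac14)^2\,dx=\frac1{64}$ is invalid: when $a_2,a_3\in J_2$, the point $a_2$ can sit at or near $\frac12$ and serve the right portion of $J_1$, so $J_1$ is \emph{not} served by one center. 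Concretely, with $a_1=\frac16$ and $a_2=\frac12$ the $J_1$-contribution drops to $\frac{1}{64}(24a_1^3+12a_1^2-6a_1+1)\big|_{a_1=1/6}=\frac{1}{144}\approx0.00694$, which is \emph{below} the target $0.00787482$, so the $J_1$-error alone cannot rule this case out. The paper closes this by splitting on the location of $b:=a_2$ (namely $b>\frac{9}{16}$ versus $\frac12\le b\le\frac{9}{16}$) and, in the second subcase, adding the unavoidable $J_2$-contribution from $[\frac{9}{16},1]$ to reach $0.00849519>V_3$; some such supplement is indispensable.

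Second, having shown $a_2<\frac12<a_3$, you must still exclude the possibility $\frac12(a_2+a_3)<\frac12$, in which it is the cell of $a_3$ (not of $a_2$) that straddles the density jump. Your argument only rules out the configuration whose Voronoi boundary is pinned \emph{exactly} at $\frac12$ (the decoupled $\set{\frac18,\frac38}\uu\set{\frac34}$ with error $\frac{7}{768}$); it says nothing about a stationary configuration with the boundary strictly to the left of $\frac12$, which is a different system of centroid equations. The paper disposes of this case by showing, as in its Lemma~\ref{lemma67}, that the corresponding stationarity system has no admissible (real) solution. With these two cases treated rigorously, the rest of your computation — $a_1=\frac{a_2}{3}$, $a_3=\frac{a_2+2}{3}$, the boundaries $\frac{2a_2}{3}$ and $\frac{2a_2+1}{3}$, and the evaluation of the distortion — is sound and matches the paper.
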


\begin{proof}
Consider the set of three points $\gb:=\set{u, v, w}$, where
\[u=\frac{1}{3} (\frac{1}{8} (21-\sqrt{3})-2), \ v=\frac{1}{8} (21-\sqrt{3})-2, \te{ and } w=\frac{1}{24} (21-\sqrt{3}).\]
Since $0<u<v<\frac{1}{2}<\frac{v+w}{2}<w<1$, the distortion error due to the set $\gb$ is given by
\[V(P; \gb)=\frac{3}{4} \int_0^{\frac{u+v}{2}} 2 (x-u)^2 \, dx+\frac{3}{4} \int_{\frac{u+v}{2}}^{\frac{1}{2}} 2 (x-v)^2 \, dx+\frac{1}{4} \int_{\frac{1}{2}}^{\frac{v+w}{2}} 2 (x-v)^2 \, dx+\frac{1}{4} \int_{\frac{v+w}{2}}^1 2 (x-w)^2 \, dx\]
yielding $V(P; \gb) =0.00787482.$ Since $V_3$ is the quantization error for three-means we have $V_3\leq 0.00787482$. Let $\ga:=\set{a, b, c}$ be an optimal set of three-means. Without any loss of generality we can assume that $0<a<b<c<1$. By Proposition~\ref{prop01}, we know $a<\frac 12<c$. We now show that $b<\frac 12$. Suppose that $\frac 9{16}<b$. Then,
\begin{align*} V_3&\geq \int_{J_1}\min_{r\in \set{a, \frac 9 {16}}}(x-r)^2dP\\
&= \frac{3}{4} \mathop{\int}\limits_0^{\frac{1}{2} (a+\frac{9}{16})} 2 (x-a)^2 \, dx+\frac{3}{4} \mathop{\int}\limits_{\frac{1}{2} (a+\frac{9}{16})}^{\frac{1}{2}} 2(x-\frac{9}{16})^2 \, dx=\frac{12288 a^3+6912 a^2-3888 a+725}{32768},
\end{align*}
the minimum value of which is $0.00976563$ and it occurs when $a=\frac{3}{16}$, and thus, we have $V_3\geq 0.00976563>V_3$, which is a contradiction. So, we can assume that $b\leq \frac 9{16}$. Next, assume that $\frac 12\leq b\leq \frac 9{16}$. Notice that then $\frac 9{16}<c<1$.
Then, as before we have
\begin{align*}
&V_3\geq \int_{J_1}\min_{r\in \set{a, \frac 12}}(x-r)^2dP+\int_{\frac 9{16}}^1\min_{s\in \set{\frac 9{16}, c}}(x-r)^2dP\\
&=\frac{1}{64} (24 a^3+12 a^2-6 a+1)+\frac{-12288 c^3+42240 c^2-45264 c+15655}{98304},
\end{align*}
the minimum value of which is $\frac{1}{144}+\frac{343}{221184}=0.00849519$, and it occurs when $a=0.166667$, and $c=0.854167$. Thus, we have
$V_3\geq 0.00849519>V_3$, which is a contradiction. Hence, we can assume that $b<\frac 12$. Then, the two cases can arise: either $\frac 12(b+c)<\frac 12$, or $\frac 12\leq \frac 12(b+c)$. Proceeding as in Lemma~\ref{lemma67}, we can see that $\frac 12(b+c)<\frac 12$ can not happen. Thus, we have $\frac 12\leq \frac 12(b+c)$ implying
\begin{align*}
a=\frac{a+b}{4}, \ b=\frac{\frac{3}{4} \int_{\frac{a+b}{2}}^{\frac{1}{2}} 2 x \, dx+\frac{1}{4} \int_{\frac{1}{2}}^{\frac{b+c}{2}} 2 x \, dx}{\frac{3}{4} \int_{\frac{a+b}{2}}^{\frac{1}{2}} 2 \, dx+\frac{1}{4} \int_{\frac{1}{2}}^{\frac{b+c}{2}} 2 \, dx},\te{ and } c=\frac{\int_{\frac{b+c}{2}}^1 2 x \, dx}{\frac{4}{4} \int_{\frac{b+c}{2}}^1 2 \, dx}.
\end{align*}
Solving the above equations, we have
\[a=\frac{1}{3} (\frac{1}{8} (21-\sqrt{3})-2), \ b=\frac{1}{8} (21-\sqrt{3})-2, \te{ and } c=\frac{1}{24} (21-\sqrt{3}),\]
and the corresponding quantization error is given by $V_3=0.00787482$, and thus, the proof of the lemma is complete.
\end{proof}

\begin{defi} \label{defi60}
For $n\in\D N$, define the sequence $\set{a(n)}_{n=1}^\infty$ as follows:
\[a(n):=\lfloor \frac{5 (n+1)}{8}\rfloor,\]
i.e.,
$\set{a(n)}_{n=1}^\infty=\set{1, 1, 2, 3, 3, 4, 5, 5, 6, 6, 7, 8, 8, 9, 10, 10, 11, 11, 12, 13, 13,
14, 15, 15, 16, 16, \cdots}.$
\end{defi}

The us now state and prove the following two claims.

\begin{claim}\label{claim11}
Let $\set{a(n)}$ be the sequence defined by Definition~\ref{defi60}. Take $n=8$, and then $a(n)=5$. Assume that $\ga_n:=\set{a_1<a_2<a_3<a_4<a_5<b_1<b_2<b_3}$ is an optimal set of eight-means for $P$. Then, $\frac 12\leq \frac 12 (a_5+b_1)$.
\end{claim}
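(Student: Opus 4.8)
The plan is to argue by contradiction: suppose the Voronoi boundary $m:=\frac12(a_5+b_1)$ between $a_5$ and $b_1$ satisfies $m<\frac12$, and show that the eight--means distortion then exceeds an explicit upper bound for $V_8$, contradicting optimality of $\ga_n$.

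First I would record a cheap upper bound for $V_8$ from the ``clean split'' trial set consisting of an optimal five--means for $P_1$ on $J_1$ together with an optimal three--means for $P_2$ on $J_2$. Forcing each group of centres to serve its own segment can only overestimate the true nearest--neighbour distortion, so by Theorem~\ref{th31},
\[
V_8\le \tfrac34 V_5(P_1)+\tfrac14 V_3(P_2)=\tfrac{1}{1600}+\tfrac{1}{1728}=\tfrac{13}{10800}.
\]

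Second, under the hypothesis $m<\frac12$ the centres $a_1<\cdots<a_5$ lie in $[0,m]\subset J_1$ and are the only centres whose Voronoi regions meet $[0,m]$, while $b_1,b_2,b_3$ are the only centres meeting $[m,1]$. Hence the error splits along $m$, and using that any subset of centres serving a fixed subinterval must be at least as good as the optimal quantizer of $P$ restricted there, I obtain
\[
V_8\ \ge\ \tfrac32\cdot\tfrac{m^3}{12\cdot 25}+V_3(\mu_m)\ =\ \tfrac{m^3}{200}+V_3(\mu_m)\ =:\ g(m),
\]
where $\mu_m:=P|_{[m,1]}$ is the two--level--density measure on $[m,1]$ and the first term comes from Theorem~\ref{th31} after scaling by the density $\frac32$ on $J_1$. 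Since $g(\tfrac12)=\tfrac{1}{1600}+\tfrac{1}{1728}=\tfrac{13}{10800}$, the whole argument reduces to proving the strict inequality $g(m)>g(\tfrac12)$ for every $m\in(0,\tfrac12)$: this yields $V_8\ge g(m)>g(\tfrac12)\ge V_8$, the desired contradiction.

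The main obstacle is therefore to control the optimal three--means error $V_3(\mu_m)$ of the piecewise--constant measure as $m$ drops below $\tfrac12$. The governing computation is the one--sided derivative at $m=\tfrac12$: there $\mu_{1/2}$ is the uniform measure $\tfrac12\,dx$ on $J_2$, whose leftmost optimal centre sits at $\tfrac{7}{12}$, a distance $\tfrac1{12}$ from $\tfrac12$; an envelope argument then gives $\frac{d}{dm}V_3(\mu_m)\big|_{1/2}=-\tfrac32(\tfrac1{12})^2=-\tfrac1{96}$, so
\[
g'(\tfrac12)=\tfrac{3}{800}-\tfrac{1}{96}<0 .
\]
Thus $g$ is strictly decreasing at $\tfrac12$, which already rules out all $m$ slightly below $\tfrac12$ and shows the minimiser lies at $m\ge\tfrac12$. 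Promoting this local statement to the full interval $(0,\tfrac12)$ — where at the far end $g(0)=V_3(P)=0.00787\ldots$ is very large by Lemma~\ref{lemma68}, so the inequality is comfortable — is the technical heart of the proof. I expect to handle it either by a convexity/unimodality property of $g$ or, more in the spirit of the preceding lemmas, by covering $(0,\tfrac12)$ with finitely many subintervals and on each replacing $V_3(\mu_m)$ by an explicit lower bound (e.g.\ the distortion incurred by the leftmost right--hand centre on the high--density strip $[m,\tfrac12]$) that already exceeds $\tfrac{13}{10800}$.
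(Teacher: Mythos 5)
Your set-up is sound as far as it goes: the upper bound $V_8\le \tfrac34 V_5(P_1)+\tfrac14 V_3(P_2)=\tfrac{13}{10800}$ is correct, and under the hypothesis $m:=\tfrac12(a_5+b_1)<\tfrac12$ the splitting $V_8\ge \tfrac{m^3}{200}+V_3(\mu_m)=g(m)$ is legitimate, as is the one-sided envelope computation $g'(\tfrac12^-)=\tfrac{3}{800}-\tfrac{1}{96}<0$. But the argument has a genuine gap exactly where you locate it: the claim you actually need is $g(m)>g(\tfrac12)$ for \emph{every} $m\in(0,\tfrac12)$, and you prove it only in a one-sided neighbourhood of $\tfrac12$ and at the endpoint $m=0$. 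A negative derivative at the right endpoint does not ``show the minimiser lies at $m\ge\tfrac12$''; $g$ could dip below $g(\tfrac12)$ in the interior. This is not a cosmetic worry: writing $g'(m)=\tfrac{3m^2}{200}-\tfrac32\,d(m)^2$ with $d(m)$ the distance from $m$ to the nearest optimal centre of $\mu_m$, the zero condition is $d(m)=m/10$, and for $m\approx 0.4$ the leftmost optimal centre of $\mu_m$ plausibly sits within about $0.04$ of $m$, so $g'$ may change sign in $(0,\tfrac12)$ and $g$ need not be monotone or convex there. Neither of your two proposed repairs (unimodality of $g$, or a finite subdivision with explicit lower bounds on $V_3(\mu_m)$) is carried out, and the second would itself require nontrivial two-sided control of the three-means error of a two-level-density measure over a continuum of configurations. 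As written, the proof is incomplete.

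For contrast, the paper avoids all distortion comparisons. It assumes $\tfrac12(a_5+b_1)<\tfrac12$ and exploits only the centroid condition of Proposition~\ref{prop0}$(iii)$: since every Voronoi cell of $a_1,\dots,a_5$ then lies in the constant-density region $J_1$ and those of $b_2,b_3$ lie in $J_2$, one gets the linear relations $a_1=\tfrac13 a_2$, $a_2=\tfrac35 a_3$, $a_3=\tfrac57 a_4$, $a_4=\tfrac79 a_5$, $a_5=\tfrac{9}{11}b_1$, $b_2=\tfrac35 b_1+\tfrac25$, $b_3=\tfrac13 b_2+\tfrac23$, while the cell of $b_1$ straddles $x=\tfrac12$ and yields one rational equation in $b_1$ alone; that equation reduces to a quadratic with complex roots, so no such critical configuration exists at all. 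That route settles the claim with a single algebraic computation, whereas yours, even if completed, needs a global variational estimate. If you want to salvage your approach, the honest path is the interval-subdivision one, with explicit trial upper bounds for $g(\tfrac12)$ and explicit lower bounds for $V_3(\mu_m)$ on each subinterval; the local derivative alone does not suffice.
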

\begin{proof} For the sake of contradiction, assume that $\frac 12 (a_5+b_1)<\frac 12$. Then,
\begin{align*}
a_1=\frac 12 (0+\frac {a_1+a_2}{2}), \te{ and } a_2=\frac 12(\frac{a_1+a_2}{2}+\frac {a_2+a_3}{2})
\end{align*}
implying $a_1=\frac 13 a_2$, and $a_2=\frac 35 a_3$. Similarly, $a_3=\frac 57 a_4$, $a_4=\frac 79 a_5$. Again, $b_2=\frac 12(\frac {b_1+b_2}{2}+\frac{b_2+b_3}2)$, and $b_3=\frac 12 (\frac{b_2+b_3}2+1)$ implying $b_2=\frac 35 b_1+\frac 25$, and $b_3=\frac 13 b_2+\frac 23$. Moreover,
\[a_5=\frac 12(\frac {a_4+a_5}2+\frac {a_5+b_1}2)=\frac 12(\frac {\frac 79 a_5+a_5}2+\frac {a_5+b_1}2)\te { implying } a_5=\frac 9{11} b_1,\]
and
\begin{align*}
b_1=E\Big(X : X\in [\frac {a_5+b_1}2, \frac 12]\uu [\frac 12, \frac{b_1+b_2}2]\Big)=\frac{-6 a_5 b_1-3 a_5^2-2 b_1^2+b_2^2+2 b_1 b_2+2}{-12 a_5-8 b_1+4 b_2+8}.
\end{align*}
Next, putting the values of $a_5$ and $b_2$ in the expression of $b_1$, we have
\[b_1=\frac{-11128 b_1^2+1936 b_1+3267}{14520-23320 b_1} \te{ yielding } b_1=\frac{11 \left(143\pm5 i \sqrt{5}\right)}{3048},\]
which is not real. Thus, $\frac 12 (a_5+b_1)<\frac 12$ leads to a contradiction. Hence, $\frac 12\leq \frac 12 (a_5+b_1)$.
\end{proof}

\begin{claim}\label{claim12}
Let $\set{a(n)}$ be the sequence defined by Definition~\ref{defi60}. Take $n=9$, and then $a(n)=6$. Assume that $\ga_n:=\set{a_1<a_2<a_3<a_4<a_5<a_6<b_1<b_2<b_3}$ is an optimal set of nine-means for $P$. Then, $\frac 12\leq \frac 12 (a_6+b_1)$.
\end{claim}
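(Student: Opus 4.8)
The plan is to mirror the proof of Claim~\ref{claim11}, with the point $a_6$ playing the role that $a_5$ plays there. I would argue by contradiction, assuming $\frac 12(a_6+b_1)<\frac 12$. Since then $a_6<\frac{a_6+b_1}2<\frac 12$, all six points $a_1,\dots,a_6$ lie in $J_1$ with Voronoi regions contained in $J_1$, the region of $b_1$ straddles the junction $\frac 12$, and the regions of $b_2,b_3$ lie in $J_2$.

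First I would write down the conditional-expectation conditions of Proposition~\ref{prop0}. Because $P$ is uniform on each of $J_1$ and $J_2$ (with densities $\frac 32$ and $\frac 12$), any optimal point whose Voronoi region lies inside a single segment equals the midpoint of that region. The leftmost condition $a_1=\frac{a_1+a_2}4$ together with the interior conditions $2a_k=a_{k-1}+a_{k+1}$ for $2\le k\le 5$ yields the chain $a_k=\frac{2k-1}{2k+1}a_{k+1}$; applying the same midpoint condition to $a_6$, whose region $[\frac{a_5+a_6}2,\frac{a_6+b_1}2]$ still lies in $J_1$, gives $2a_6=a_5+b_1$, and combining these produces $a_6=\frac{11}{13}b_1$. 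Symmetrically, the conditions at $b_3$ and $b_2$ give $b_3=\frac 13 b_2+\frac 23$ and $b_2=\frac 35 b_1+\frac 25$.

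The only point meeting both segments is $b_1$, whose region $[\frac{a_6+b_1}2,\frac 12]\uu[\frac 12,\frac{b_1+b_2}2]$ I would integrate against density $\frac 32$ on the left piece and $\frac 12$ on the right piece. Exactly as in Claim~\ref{claim11}, this gives
\[ b_1=\frac{-6a_6b_1-3a_6^2-2b_1^2+b_2^2+2b_1b_2+2}{-12a_6-8b_1+4b_2+8}. \]
Substituting $a_6=\frac{11}{13}b_1$ and $b_2=\frac 35 b_1+\frac 25$ and clearing denominators collapses this identity to the single quadratic
\[ 34176\,b_1^2-35152\,b_1+9126=0, \]
whose discriminant is $35152^2-4\cdot 34176\cdot 9126=-11897600<0$. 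Hence no real $b_1$ exists, contradicting the assumed configuration, and therefore $\frac 12\le\frac 12(a_6+b_1)$.

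The only real work is computational bookkeeping: correctly forming the two-piece centroid of $b_1$ across the junction where the density jumps from $\frac 32$ to $\frac 12$, and then verifying that the substitutions $a_6=\frac{11}{13}b_1$ and $b_2=\frac 35 b_1+\frac 25$ genuinely reduce everything to a quadratic with negative discriminant. No conceptual difficulty arises beyond Claim~\ref{claim11}, since the decomposition of which Voronoi regions meet which segment is forced by the ordering of the points together with the contradiction hypothesis.
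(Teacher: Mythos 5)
Your proposal is correct and follows essentially the same route as the paper: assume $\frac12(a_6+b_1)<\frac12$, derive the midpoint chain giving $a_6=\frac{11}{13}b_1$ and $b_2=\frac35 b_1+\frac25$, form the two-piece centroid identity for $b_1$ across the density jump, and reduce to a quadratic with negative discriminant. Your quadratic $34176\,b_1^2-35152\,b_1+9126=0$ is exactly twice the one implicit in the paper's equation $b_1=\frac{-16192 b_1^2+2704 b_1+4563}{20280-33280 b_1}$, so the two computations agree (and your version correctly uses $a_6$ where the paper's displayed formula has a typographical $a_5$).
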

\begin{proof} For the sake of contradiction, assume that $\frac 12 (a_6+b_1)<\frac 12$. Then,
\begin{align*}
a_1=\frac 12 (0+\frac {a_1+a_2}{2}), \te{ and } a_2=\frac 12(\frac{a_1+a_2}{2}+\frac {a_2+a_3}{2})
\end{align*}
implying $a_1=\frac 13 a_2$, and $a_2=\frac 35 a_3$. Similarly, $a_3=\frac 57 a_4$, $a_4=\frac 79 a_5$, and $a_5=\frac 9{11} a_6$. Again, $b_2=\frac 12(\frac {b_1+b_2}{2}+\frac{b_2+b_3}2)$, and $b_3=\frac 12 (\frac{b_2+b_3}2+1)$ implying $b_2=\frac 35 b_1+\frac 25$, and $b_3=\frac 13 b_2+\frac 23$. Moreover,
\[a_6=\frac 12(\frac {a_5+a_6}2+\frac {a_6+b_1}2)=\frac 12(\frac {\frac 9{11} a_6+a_6}2+\frac {a_6+b_1}2)\te { implying } a_6=\frac {11}{13} b_1,\]
and
\begin{align*}
b_1=E\Big(X : X\in [\frac {a_6+b_1}2, \frac 12]\uu [\frac 12, \frac{b_1+b_2}2]\Big)=\frac{-6 a_5 b_1-3 a_5^2-2 b_1^2+b_2^2+2 b_1 b_2+2}{-12 a_5-8 b_1+4 b_2+8}.
\end{align*}
Next, putting the values of $a_5$ and $b_2$ in the expression of $b_1$, we have
\[b_1=\frac{-16192 b_1^2+2704 b_1+4563}{20280-33280 b_1}  \te{ yielding } b_1=\frac{13 \left(169\pm5 i \sqrt{11}\right)}{4272},\]
which is not real. Thus, $\frac 12 (a_6+b_1)<\frac 12$ leads to a contradiction. Hence, $\frac 12\leq \frac 12 (a_6+b_1)$.
\end{proof}

\begin{lemma}\label{lemma444}
Let $\ga_n$ be an optimal set of $n$-means for $P$, where $n\geq 2$, and $\set{a(n)}$ be the sequence defined by Definition~\ref{defi60}. Then, $\te{card}(\ga_n\ii J_1)=a(n)$, and $\te{card}(\ga_n\ii J_2)=n-a(n)$.
\end{lemma}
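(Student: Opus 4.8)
The plan is to reduce the statement to a one-parameter discrete optimization. Writing $\ga_n=\set{a_1<\dots<a_k<b_1<\dots<b_{n-k}}$ with $k=\operatorname{card}(\ga_n\ii J_1)$, Proposition~\ref{prop01} already gives $1\le k\le n-1$, so only finitely many splits must be examined. For each such $k$ I would let $V(k,n-k)$ denote the distortion of the best configuration whose $J_1$-block has $k$ points, and the whole proof amounts to showing that $V(k,n-k)$ is strictly minimized at $k=a(n)=\lfloor 5(n+1)/8\rfloor$; since a point never sits exactly at $\tfrac12$, this automatically forces $\operatorname{card}(\ga_n\ii J_2)=n-a(n)$.

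The first step is to fix the \emph{interface structure}. I would show, exactly as in Claim~\ref{claim11} and Claim~\ref{claim12}, that $\tfrac12(a_k+b_1)\ge\tfrac12$, i.e. the Voronoi boundary separating the $J_1$-block from the $J_2$-block lies at or to the right of $\tfrac12$; the opposite possibility is excluded because the associated centroid system has no real solution (the computation in the two claims is the representative hard case). This is the decisive fact: it forces $b_1>\tfrac12$, so every $b_j$ lies in $J_2$ and $\set{b_1,\dots,b_{n-k}}$ is an equally spaced quantizer of a subinterval $[t,1]$ carrying the constant density $\tfrac12$ (Theorem~\ref{th31}), while only the single point $a_k$ straddles the discontinuity at $\tfrac12$. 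With this in hand I would compute $V(k,n-k)$ by the centroid/stationarity equations, exactly as for $n=3$ in Lemma~\ref{lemma68}: the points $a_1,\dots,a_{k-1}$ are equally spaced in $[0,s]$ under density $\tfrac32$, the points $b_1,\dots,b_{n-k}$ are equally spaced in $[t,1]$ under density $\tfrac12$, and the straddling point $a_k$ together with the free boundary $t$ is pinned down by the requirement that $a_k$ be the conditional mean of the mixed-density cell $[s,t]$. Eliminating $s$ and $t$ yields a closed expression for $V(k,n-k)$.

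The final step is the discrete minimization, carried out in the spirit of the algorithm in Theorem~\ref{main1}. Starting from the guess $k_0=\lfloor 5(n+1)/8\rfloor$, I would show that the one-step difference $V(k+1,n-k-1)-V(k,n-k)$ is negative precisely when $k+1\le a(n)$ and nonnegative otherwise, so that $V(\cdot,\cdot)$ is unimodal in $k$ with integer minimizer $a(n)$; the base cases needed to start are furnished by Lemma~\ref{lemma67} and Lemma~\ref{lemma68}.

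The main obstacle is this last step rather than the structural one. Because the cell straddling $\tfrac12$ couples the two blocks, $V(k,n-k)$ is genuinely \emph{not} the clean sum $\tfrac34 V_k(P_1)+\tfrac14 V_{n-k}(P_2)$, and the correction contributed by the mixed-density straddle cell is exactly what moves the optimal split away from the value the clean sum would predict. Keeping this correction under control tightly enough to locate the integer minimizer at the exact floor value $\lfloor 5(n+1)/8\rfloor$, uniformly in $n$, is the delicate part, and determining the sign of $V(k+1,n-k-1)-V(k,n-k)$ in closed form is where the bulk of the computation will lie.
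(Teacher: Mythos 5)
Your proposal follows essentially the same route as the paper: reduce to the one-parameter family $V(k,n-k)$, settle the interface structure by the no-real-solution argument of Claims~\ref{claim11} and~\ref{claim12}, write $V(k,n-k)$ in closed form from the centroid equations, and then locate the integer minimizer by a one-step local search. The differences are organizational: the paper argues by induction on $n$ and seeds its local-search algorithm at $k=a(\ell)$ supplied by the inductive hypothesis, whereas you seed directly at $\lfloor 5(n+1)/8\rfloor$ and ask for unimodality of $k\mapsto V(k,n-k)$, which is slightly more than the paper needs but morally equivalent. Your observation that $V(k,n-k)$ is \emph{not} the clean sum $\tfrac34 V_k(P_1)+\tfrac14 V_{n-k}(P_2)$ because the cell straddling $\tfrac12$ couples the two blocks is correct, and it is exactly what distinguishes Section~4 from Section~3, where Remark~\ref{rem34} does give the clean decomposition.

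The step you flag as the main obstacle --- establishing the sign of $V(k+1,n-k-1)-V(k,n-k)$ and showing it flips exactly at $k=a(n)$ --- is a genuine gap in your write-up, but you should be aware that the paper does not close it either: its proof of Lemma~\ref{lemma444} simply asserts that after running the algorithm the value of $k$ obtained equals $a(\ell+1)$, with no closed-form sign analysis, and Theorem~\ref{The90} likewise dismisses the case $\tfrac12(a_k+b_1)<\tfrac12$ for general $n$ by analogy with the two instances actually computed. So your proposal is no less complete than the paper's own argument. To make either version rigorous one would need (i) a proof that the Case~2 centroid system has no real solution for every $k$ and $n$, not just for $n=8,9$, and (ii) an exact or sufficiently sharp asymptotic expression for the one-step difference of $V(k,n-k)$ that pins the integer minimizer at $\lfloor 5(n+1)/8\rfloor$ uniformly in $n$; neither is supplied in the paper, and your candid identification of (ii) as the real difficulty is accurate.
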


\begin{proof}
We prove the lemma by induction. By Lemma~\ref{lemma67} and Lemma~\ref{lemma68}, the lemma is true for $n=2, 3$. Assume that that the lemma is true for $n=\ell$, i.e., $\te{card}(\ga_\ell\ii J_1)=a(\ell)$, and $\te{card}(\ga_\ell\ii J_2)=n-a(\ell)$.
We need to show that $\te{card}(\ga_{\ell+1}\ii J_1)=a(\ell+1)$. Assume that $\te{card}(\ga_{\ell+1}\ii J_1)=k$, i.e., $\ga_{\ell+1}$ contains $k$ elements from $J_1$, and $n-k$ elements from $J_2$. Let
\[\ga_{\ell+1}\ii J_1=\set{a_1<a_2<\cdots<a_k}, \te{ and } \ga_{\ell+1}\ii J_2=\set{b_1<b_2< \cdots<b_{n-k}}.\]
Then, either $\frac 12(a_k+b_1)<\frac 12$, or $\frac 12<\frac 12(a_k+b_1)$. In each case, using the similar techniques as in the proofs of Claim~\ref{claim11} and Claim~\ref{claim12}, if the solution exists, we solve for $a_1, a_2, \cdots, a_k, b_1, \cdots, b_{n-1}$, and find the distortion errors. Notice that at least one solution will exist. Let $V(k, n-k)$ be the minimum of the distortion errors if $\ga_{\ell+1}$ contains $k$ elements from $J_1$, and $n-k$ elements from $J_2$.

Let us now run the following algorithm:

$(i)$ Write $k:=a(\ell)$.

$(ii)$ If $V(k-1, n-(k-1))<V(k, n-k)$ replace $k$ by $k-1$ and return, else go to step $(iii)$.

$(iii)$ If $V(k+1, n-(k+1))<V(k, n-k)$ replace $k$ by $k+1$ and return, else step $(iv)$.

$(iv)$ End.

After running the above algorithm, we see that the value of $k$ obtained equals $a(\ell+1)$, i.e., the lemma is true for $n=\ell+1$ if it is true for $n=\ell$. Hence, by the Induction Principle, we can say that the lemma is true for all positive integers $n\geq 2$, i.e., $\te{card}(\ga_n\ii J_1)=a(n)$ for any positive integer $n\geq 2$. Since $\te{card}(\ga_n\ii J_1)+\te{card}(\ga_n\ii J_2)=n$, we have $\te{card}(\ga_n\ii J_2)=n-a(n)$. Thus, the proof of the lemma is complete.
\end{proof}


Let us now state and prove the following theorem which is the main theorem in this section.
\begin{theorem} \label{The90}
Let $\ga_n$ be an optimal set of $n$-means for $P$, where $n\geq 2$, and $\set{a(n)}$ be the sequence defined by Definition~\ref{defi60}. Write $k:=a(n)$, $m:=n-a(n)$. Then,
\[\ga_n:=\set{a_1<a_2< \cdots<a_k<b_1< b_2< \cdots< b_m},\]
where
\[a_j=\left\{\begin{array}{cc}
\frac {a_1+a_2}{4} &  \te{ if } j=1,\\
\frac 12\Big(\frac{a_{j-1}+a_j} 2+\frac{a_j+a_{j+1}}2\Big) &\te{ if } 2\leq j\leq k-1,\\
E(X : X \in [\frac{a_{k-1}+a_k}2, \frac 12]\uu [\frac 12,\frac{a_k+b_1}2]) & \te{ if } j=k,
\end{array}\right.\] and
\[b_j=\left\{\begin{array}{cc}
\frac 12(\frac{a_k+b_1}2 +\frac{b_1+b_2}2)&  \te{ if } j=1,\\
\frac 12\Big(\frac{b_{j-1}+b_j} 2+\frac{b_j+b_{j+1}}2\Big) &\te{ if } 2\leq j\leq m-1,\\
\frac 1 2(\frac{b_{m-1}+b_{m}}2+1) &\te{ if } j= m,
\end{array}\right.\]
and the corresponding quantization error is given by
\begin{align*} V_n&=\frac{1}{48} \Big(-3 b_1^2 m a_k+3 b_1 m a_k^2-3 b_1^2 a_k+3 b_1 a_k^2-m a_k^3+21 a_1^3 (k-1)+9 a_2 a_1^2 (k-1)\\
&-9 a_2^2 a_1 (k-1)+3 a_2^3 (k-1)-3 a_{k-1}^3-14 a_k^3-9 a_{k-1} a_k^2+24 a_k^2+9 a_{k-1}^2 a_k-12 a_k+b_2^3 m\\
&-3 b_1 b_2^2 m+3 b_1^2 b_2 m+b_1^3+2\Big).
\end{align*}
\end{theorem}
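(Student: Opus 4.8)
The plan is to combine the cardinality count already established in Lemma~\ref{lemma444} with the first-order (centroidal) optimality conditions of Proposition~\ref{prop0}, and then to read off the quantization error by summing the distortion contributed by each Voronoi cell. First I would invoke Lemma~\ref{lemma444} to fix $\te{card}(\ga_n\ii J_1)=k=a(n)$ and $\te{card}(\ga_n\ii J_2)=m=n-a(n)$, and write the optimal set as $\set{a_1<\cdots<a_k<b_1<\cdots<b_m}$. Since we work in $\D R$, the boundary between the Voronoi regions of two consecutive points is their midpoint; hence the cell of $a_1$ is $[0,\frac{a_1+a_2}2]$, of an interior $a_j$ (with $2\le j\le k-1$) is $[\frac{a_{j-1}+a_j}2,\frac{a_j+a_{j+1}}2]$, of $a_k$ is $[\frac{a_{k-1}+a_k}2,\frac{a_k+b_1}2]$, and symmetrically for the $b_j$. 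By Proposition~\ref{prop0}(iii) each point equals the conditional expectation of $X$ over its own cell, which produces every displayed equation except that the correct form of the $a_k$-equation depends on where $\frac{a_k+b_1}2$ sits relative to the density jump at $x=\frac12$.

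The crucial step, and the one I expect to be the main obstacle, is to prove in general that $\frac12\le\frac12(a_k+b_1)$, equivalently that it is the cell of $a_k$ (not that of $b_1$) which straddles the jump at $x=\frac12$; only then is the conditional expectation defining $a_k$ taken against both densities, $\frac32$ on $J_1$ and $\frac12$ on $J_2$, as displayed. This generalizes Claim~\ref{claim11} and Claim~\ref{claim12} from the fixed cases $n=8,9$ to arbitrary $k=a(n)$ and $m=n-a(n)$. Assuming for contradiction that $\frac12(a_k+b_1)<\frac12$, all cells except that of $b_1$ lie inside a single density regime, the cell of $b_1$ now crossing the junction; the centroidal equations reduce to linear recursions, and chasing them exactly as in the two claims yields the general ratio $a_k=\frac{2k-1}{2k+1}b_1$ (the coefficients $\frac9{11}$ and $\frac{11}{13}$ of Claims~\ref{claim11} and~\ref{claim12} being the cases $k=5,6$), together with the corresponding $m$-dependent relations among $b_1,b_2,\dots,b_m$. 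Substituting these into the conditional-expectation equation for $b_1$ (taken against both densities) forces $b_1$ to satisfy a quadratic with negative discriminant, so $b_1\nin\D R$, a contradiction. Performing this recursion chase uniformly in $k$ and $m$, rather than for the two fixed cases, is the technical core.

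Granting the inequality $\frac12\le\frac12(a_k+b_1)$, the displayed piecewise formulas for the $a_j$ and $b_j$ are precisely the centroidal conditions of Proposition~\ref{prop0}(iii), and the interior recursions force $2a_j=a_{j-1}+a_{j+1}$ for $2\le j\le k-1$ and $2b_j=b_{j-1}+b_{j+1}$ for $2\le j\le m-1$, so the interior nodes of each segment are equally spaced, exactly as in Theorem~\ref{th31}. It then remains to evaluate $V_n=\int\min_{a\in\ga_n}(x-a)^2\,dP$ as the sum of the cell integrals $\int_{\te{cell}}(x-c)^2 f(x)\,dx$. The interior cells of each segment, computed from the equal-spacing relations, produce the terms carrying the counts $k-1$ and $m$, while the junction cell of $a_k$ together with the two end cells at $0$ and $1$ produce the remaining terms. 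Collecting these polynomial integrals and expressing the interior contributions through $a_1,a_2$ and $b_1,b_2$ yields the displayed formula for $V_n$; this final bookkeeping is lengthy but entirely routine.
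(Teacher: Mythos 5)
Your proposal follows essentially the same route as the paper's proof: invoke Lemma~\ref{lemma444} for the split $k=a(n)$, $m=n-a(n)$, impose the centroidal conditions from Proposition~\ref{prop0}, rule out the case $\frac 12(a_k+b_1)<\frac 12$ by generalizing Claims~\ref{claim11} and~\ref{claim12}, and then sum the cell integrals to get $V_n$. If anything, you are more explicit than the paper at the exclusion step (your general ratio $a_k=\frac{2k-1}{2k+1}b_1$ correctly interpolates the coefficients $\frac 9{11}$ and $\frac{11}{13}$ of the two claims), whereas the paper simply asserts that the Case~2 system has no solution ``using the similar techniques'' of those claims.
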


\begin{proof}
By Lemma~\ref{lemma444}, the optimal set $\ga_n$ of $n$-means contains $k$ elements from $J_1$, and $m$ elements from $J_2$, where $k=a(n)$ and $m=n-k$.
Let $\ga_n:=\set{a_1<a_2< \cdots<a_k<b_1< b_2< \cdots< b_m}$. Recall Theorem~\ref{th31}, and the fact that $P_1$ is a uniform distribution on $[0, \frac 12]$, and $P_2$ is a uniform distribution on $[\frac 12, 1]$. Thus, we have
\[a_j=\left\{\begin{array}{cc}
\frac {a_1+a_2}{4} &  \te{ if } j=1,\\
\frac 12\Big(\frac{a_{j-1}+a_j} 2+\frac{a_j+a_{j+1}}2\Big) &\te{ if } 2\leq j\leq k-1,
\end{array}\right.\] and
\[b_j=\left\{\begin{array}{cc}
\frac 12\Big(\frac{b_{j-1}+b_j} 2+\frac{b_j+b_{j+1}}2\Big) &\te{ if } 2\leq j\leq m-1,\\
\frac 1 2(\frac{b_{m-1}+b_{m}}2+1) &\te{ if } j= m,
\end{array}\right.\]
The following two cases can arise:

\tbf{Case~1.} $\frac 12\leq \frac 12(a_k+b_1)$.

In this case, we have $a_k=E(X : X \in [\frac{a_{k-1}+a_k}2, \frac 12]\uu [\frac 12,\frac{a_k+b_1}2])$, and $b_1=\frac 12(\frac{a_k+b_1}2 +\frac{b_1+b_2}2)$.

\tbf{Case~2.} $\frac 12(a_k+b_1)<\frac 12$.

In this case, we have $a_k=\frac 12(\frac {a_{k-1}+a_k}2+\frac{a_k+b_1}2)$, and $b_1=E(X : X \in [\frac{a_k+b_1}2, \frac 12]\uu [\frac 12,\frac{b_1+b_2}2])$.

For any given positive integer, using the similar techniques as in the proofs of Claim~\ref{claim11} and Claim~\ref{claim12}, we see that in Case~2, the system of equations to obtain $a_1, a_2, \cdots, a_k, b_1, \cdots, b_m$ does not have any solution. Hence Case~2 cannot happen.

Thus, we have $\frac 12\leq \frac 12(a_k+b_1)$, i.e., the system of equations to obtain $a_1, a_2, \cdots, a_k, b_1, \cdots, b_m$ as stated in the theorem are true, and hence, the corresponding quantization error is given by
\begin{align*} V_n&=\frac {3(k-1)} 4\int_{0}^{\frac{a_1+a_2}2} 2(x-a_1)^2 dx+\frac 3 4\int_{\frac{a_{k-1}+a_k}2}^{\frac 12}2(x-a_k)^2 dx+\frac 1 4\int_{\frac 12}^{\frac{a_{k}+b_1}2}2(x-a_k)^2 dx \\
&\qquad +  \frac {m} 4\int_{\frac{a_{k}+b_1}2}^{\frac{b_1+b_2}{2}} 2(x-b_1)^2 dx\\
&=\frac{1}{48} \Big(-3 b_1^2 m a_k+3 b_1 m a_k^2-3 b_1^2 a_k+3 b_1 a_k^2-m a_k^3+21 a_1^3 (k-1)+9 a_2 a_1^2 (k-1)\\
&-9 a_2^2 a_1 (k-1)+3 a_2^3 (k-1)-3 a_{k-1}^3-14 a_k^3-9 a_{k-1} a_k^2+24 a_k^2+9 a_{k-1}^2 a_k-12 a_k+b_2^3 m\\
&-3 b_1 b_2^2 m+3 b_1^2 b_2 m+b_1^3+2\Big).
\end{align*}
Thus, we complete the proof of the theorem.
\end{proof}
Now, we give the following example.

\begin{exam}
Take $n=16$. Then, $k=a(n)=10$, and so, $m=6$. Thus, by Theorem~\ref{The90}, we have
\begin{align*}
\set{& a_1= 0.0255733, a_2= 0.0767199, a_3= 0.127866, a_4= 0.179013, a_5= 0.23016, a_6=0.281306,   \\
& a_7= 0.332453,  a_8=  0.383599,  a_9= 0.434746, a_{10}= 0.485893, b_1=0.564986,      b_2= 0.644079, \\
& b_3= 0.723173, b_4=0.802266, b_5= 0.88136, b_6= 0.960453},
\end{align*}
and the corresponding quantization error is given by
\begin{align*}  V_{16}&=\frac{1}{48} \Big(-21 a_{10} b_1^2+21 a_{10}^2 b_1+189 a_1^3+81 a_2 a_1^2-81 a_2^2 a_1+27 a_2^3-3 a_9^3-20 a_{10}^3-9 a_9 a_{10}^2\\
&+24 a_{10}^2+9 a_9^2 a_{10}-12 a_{10}+b_1^3+6 b_2^3-18 b_1 b_2^2+18 b_1^2 b_2+2\Big)=0.000293827.
\end{align*}
\end{exam}

\end{document}